\theoremstyle{plain}
\newtheorem{thm}{Theorem}[section]
\newtheorem{prop}[thm]{Proposition}
\newtheorem{lem}{Lemma}[section]
\newtheorem{rmk}{Remark}
\newcommand{\ind}{\bs 1}
\newcommand{\dd}{\mathrm d}
\newcommand{\pd}{\partial}
\newcommand{\ee}{\mathrm e}
\newcommand{\EE}{\mathsf E}
\newcommand{\OO}{\mathsf{O}}
\newcommand{\oo}{\mathsf{o}}
\newcommand{\Op}{\mathsf{O}_p}
\newcommand{\op}{\mathsf{o}_p}
\newcommand{\PP}{\mathsf P}
\newcommand{\VV}{\mathsf V}
\newcommand{\cov}{\mathsf{Cov}}
\newcommand{\FF}{\mathscr F}
\newcommand{\GG}{\mathscr G}
\newcommand{\pto}{\stackrel{p}{\to}}
\newcommand{\dto}{\stackrel{d}{\to}}
\newcommand{\wto}{\stackrel{w}{\to}}
\newcommand{\TT}{\mathsf T}
\newcommand{\bs}{\boldsymbol}
\newcommand{\tand}{\widetilde\land}
\title{\textbf{Volatility change point detection for linear parabolic SPDEs}}
\date{}
\author{\textbf{Yozo Tonaki}\thanks{Graduate School of Engineering Science, 
The University of Osaka, Toyonaka, Japan}
\thanks{Center for Mathematical Modeling and Data Science (MMDS), 
The University of Osaka, Toyonaka, Japan} 
\and \textbf{Yusuke Kaino}\thanks{Graduate School of Maritime Sciences, 
Kobe University, Kobe, Japan}
\and \textbf{Masayuki Uchida}$^{* \dag}$\thanks{CREST, 
Japan Science and Technology Agency, Kawaguchi, Japan}}
\begin{document}
\maketitle
\begin{abstract}
We consider change point detection for the volatility  
in second order linear parabolic stochastic partial differential equations
based on high frequency spatio-temporal data.
We give a test statistic to detect changes in the volatility 
based on change point analysis for diffusion processes
and derive the asymptotic null distribution of the test statistic.
We also show that 
the test is consistent.
Moreover, we provide some examples and then perform numerical simulations of the proposed test statistic.

\begin{center}
\textbf{Keywords and phrases}
\end{center}
Change point detection, 
high frequency spatio-temporal data,
linear parabolic stochastic partial differential equations,
stochastic differential equations,
volatility function.
\end{abstract}

\section{Introduction}
The change point problem was originally formulated 
by Page \cite{Page1954, Page1955} for quality control.
Today, it has applications in various fields such as 
economics \cite{Manner_etal2024, Song2020},
environmental science \cite{Cahill_etal2015, Gallagher_etal2013},
and medicine \cite{Otto_Schmid2016, Yang_etal2006} and is becoming increasingly important. 
Recently, stochastic partial differential equations (SPDEs) have become popular in many fields as a modeling tool for spatio-temporal phenomena 
(see Jones and Zhang \cite{Jones_Zhang1997}, 
Piterbarg and Ostrovskii \cite{Piterbarg_Ostrovskii1997}, 
Mohapl \cite{Mohapl2000},
North et al.\cite{North_etal2011},
Tuckwell \cite{Tuckwell2013}
and Altmeter et al.\cite{Altmeyer_etal2022}), 
and change point analysis is also essential for spatio-temporal phenomena.

We study change point analysis for the linear parabolic SPDE 
\begin{equation}\label{spde}
\dd X_t(y) = -A_\theta X_t(y) \dd t +\sigma(t) \dd W_t^Q(y), 
\quad(t, y) \in [0,1] \times D
\end{equation}
with an initial value $X_0$ 
and the Dirichlet boundary condition $X_t(y) = 0$, $(t,y) \in [0,1] \times \pd D$,
where $D =(0,1)^d$, $d \in \mathbb N$, 
\begin{equation*}
-A_\theta = \theta_2 \triangle + \theta_1 \cdot \nabla +\theta_0,
\end{equation*}
$\theta_0 \in \mathbb R$, $\theta_1 \in \mathbb R^d$,  
$\theta_2 \in (0,\infty)$ are unknown parameters,
$[0,1] \ni t \mapsto \sigma(t) \in (0,\infty)$ 
is possibly a time-dependent volatility function, 
$\{ W_t^Q \}_{t \ge 0}$ is a $Q$-Wiener process in a Sobolev space on $D$ 
(see \eqref{QW} below), 
and the initial value $X_0$ is an $L^2(D)$-valued random variable and 
independent of $\{ W_t^Q \}_{t \ge 0}$.

Change point analysis consists of change point detection, change point estimation 
and change point model selection, and has been studied in various models
by many researchers. See, for instance, 
Bai \cite{Bai1997}, 
Cs\"{o}rg\"{o} and Horv\'{a}th \cite{Csorgo_Horvath1997}, 
Bai and Perron \cite{Bai_Perron1998}, Koul et al.\cite{Koul_etal2003}, 
Lee et al.\cite{Lee_etal2003},
Ninomiya \cite{Ninomiya2005, Ninomiya2015},
Zou el al.\cite{Zou_etal2020}, 
and Horv\'{a}th and Rice \cite{Horvath_Rice2024}.
De Gregorio and Iacus \cite{DeGregorio_Iacus2008}
and Iacus and Yoshida \cite{Iacus_Yoshida2012} 
studied the volatility change point estimation 
in stochastic differential equations 
with a fixed time interval based on high frequency data.
Song and Lee \cite{Song_Lee2009},
Lee \cite{Lee2011}, 
Negri and Nishiyama \cite{Negri_Nishiyama2017},
Tonaki et al.\cite{TKU2022, TKU2023a}
and Tonaki and Uchida \cite{TU2023}
considered the change point problem 
for the diffusion or drift parameter in ergodic diffusion processes 
based on high frequency data.
In particular, for the SPDE model, 
Reiss et al.\cite{Reiss_etal2023arXiv} 
and Tiepner and Trottner \cite{Tiepner_Trottner2024arXiv}
considered the stochastic heat equation
with a space-dependent diffusivity, 
and estimated the change hypersurface of the diffusivity 
based on local measurement observations.

We consider change detection for the volatility function $\sigma(t)$ 
in SPDE \eqref{spde} based on high frequency spatio-temporal data.
The coordinate process defined by the inner product of 
the random field $X_t$ and the eigenfunction $e_l$ given in \eqref{eigenfunc} below
is an Ornstein-Uhlenbeck process, and an approximate coordinate process 
can be constructed through statistics for SPDEs 
based on high frequency spatio-temporal data.
For parametric estimation for parabolic SPDEs based on discrete observations, see 
Markussen \cite{Markussen2003}, 
Chong \cite{Chong2020}, 
Bibinger and Trabs \cite{Bibinger_Trabs2020}, 
Cialenco and Huang \cite{Cialenco_Huang2020}, 
Kaino and Uchida \cite{Kaino_Uchida2021a, Kaino_Uchida2021b},
Hildebrandt and Trabs \cite{Hildebrandt_Trabs2021}, 
Bibinger and Bossert \cite{Bibinger_Bossert2023}, 
Gamain and Tudor \cite{Gamain_Tudor2023}, 
Tonaki et al.\cite{TKU2023b}--\cite{TKU2026} 
and Bossert \cite{Bossert2024}.
In this paper, we thus propose a change point test 
for the volatility function in liner parabolic SPDEs 
based on the change point analysis for diffusion processes.
The aim of this paper is to show that 
the asymptotic null distribution of the proposed test statistic 
for the volatility change is the distribution of the supremum of a Brownian bridge, 
and that the test statistic has consistency of test.

This paper is organized as follows.
In Section \ref{sec2}, we state main results. 
We propose a test statistic to detect changes of the volatility function, 
and derive the asymptotic null distribution of the test statistic.
We then show that the test statistic has consistency of test.
Section \ref{sec3} gives examples of the procedure to perform 
volatility change point detection for SPDE \eqref{spde} with $d = 1, 2$.
In Section \ref{sec4}, we provide simulation results of change point detection 
for the volatility function in SPDE \eqref{spde} with $d = 1$. 
Section \ref{sec5} is devoted to the proofs of the results in Section \ref{sec2}.
In Appendices \ref{secA} and \ref{secB}, 
we treat parametric estimation for linear parabolic SPDEs 
with a time-dependent volatility for $d = 1, 2$.

\section{Main results}\label{sec2}
The eigenpairs $\{ \lambda_l, e_l \}_{l \in \mathbb N^d}$ of the operator $A_\theta$ 
are given by
\begin{align}
e_l(y) = e_l(y;\kappa) &= 2^{d/2} \exp\biggl( {-\frac{1}{2} \kappa^\TT y} \biggr) 
\prod_{k=1}^d \sin(\pi l_k y^{(k)}),
\label{eigenfunc}
\\
\lambda_l &= \theta_2 \pi^2 |l|_{2}^2 +\frac{|\theta_1|_{2}^2}{4 \theta_2} -\theta_0
\nonumber
\end{align}
for $ l = (l_1,\ldots,l_d) \in \mathbb N^d$ 
and $y = (y^{(1)},\ldots, y^{(d)})^\TT \in \overline D$,
where $\kappa = \theta_1/\theta_2 \in \mathbb R^d$, $|\cdot|_{2}$ denotes the Euclidean norm, and 
$\TT$ denotes the transpose. 
The eigenfunctions $\{ e_l \}_{l \in \mathbb N^d}$ are orthonormal with respect to 
the weighted $L^2$-inner product
\begin{equation*}
\langle u, v \rangle
= \int_D u(y)v(y)\exp(\kappa^\TT y) \dd y, 
\quad 
\| u \| = \sqrt{\langle u, u \rangle}
\end{equation*}
for $u, v \in L^2(D)$.
We assume $\lambda_{1_d} > 0$ 
so that $A_\theta$ is a positive definite and self-adjoint operator, 
where $1_d = (1,\ldots,1)^\TT \in \mathbb R^d$.

The $Q$-Wiener process in SPDE \eqref{spde} is given by
\begin{equation}\label{QW}
W_t^Q = \sum_{l \in \mathbb N^d} \gamma_l^{-\alpha/2} w_l(t) e_l,
\quad 
t \ge 0,
\end{equation}
where $\{ w_l \}_{l \in \mathbb N^d}$ are independent 
$\mathbb R$-valued standard Brownian motions, 
$\{ \gamma_l \}_{l \in \mathbb N^d}$ is a $(0,\infty)$-valued sequence
such that $c_1 |l|_2^2 \le \gamma_l \le c_2 |l|_2^2$ 
for some universal constants $c_1,c_2 \in (0, \infty)$ and all $l \in \mathbb N^d$,
and $\alpha \in [0,\infty) \cap (d/2-1, \infty)$. 
$\gamma_l$ and $\alpha$ 
may be  unknown parameters.
Note that the $Q$-Wiener process $\{W_t^Q\}_{t \ge 0}$ given in \eqref{QW}
is well-defined in a Hilbert space larger than $L^2(D)$ 
and we have $\sup_{t \in [0,1]} \EE[\| X_t \|^2] < \infty$.
For details, see \cite{DaPrato_Zabczyk2014}, \cite{TKU2023b} and \cite{Bossert2024}.

There exists a unique mild solution of SPDE \eqref{spde}, which is given by
\begin{equation*}
X_t=\ee^{-t A_\theta}X_0 +\int_0^t \sigma(s) \ee^{-(t-s)A_\theta}\dd W_s^{Q}
\quad \mathrm{a.s.}
\end{equation*}
for any $t \ge 0$, where $\ee^{-t A_\theta} u 
= \sum_{l \in \mathbb N^d} \ee^{-\lambda_l t}
\langle u, e_l \rangle e_l$ for $u \in L^2(D)$.
The random field $X_t(y)$ is then decomposed as follows.
\begin{equation*}
X_t(y)=\sum_{l \in \mathbb N^d} x_l(t)e_l(y),
\quad t \ge 0, \ y \in \overline D
\end{equation*}
with
\begin{equation}\label{cor-pro}
x_l(t) = \langle X_t, e_l \rangle =
\ee^{-\lambda_l t} \langle X_0, e_l \rangle 
+\gamma_l^{-\alpha/2} \int_0^t \sigma(s) \ee^{-\lambda_l(t-s)} \dd w_l(s).
\end{equation}
The coordinate process $x_l(t)$ satisfies the Ornstein-Uhlenbeck dynamics
\begin{equation}\label{OU-dyna}
\left\{
\begin{split}
\dd x_l(t) &= -\lambda_l x_l(t)\dd t +\sigma(t) \gamma_l^{-\alpha/2} \dd w_l(t),
\\
x_l(0) &= \langle X_0, e_l \rangle.
\end{split}
\right.
\end{equation}

Let $N \in \mathbb N$ and $M = (M_1,\ldots, M_d) \in \mathbb N^d$. 
We suppose that a mild solution of SPDE \eqref{spde} 
is discretely observed on the grid 
$(t_i, y_j) = (t_i, y_{j_1}^{(1)}, \ldots, y_{j_d}^{(d)}) \in [0,1] \times [0,1]^d$
with
\begin{equation*}
t_i = \frac{i}{N}, 
\quad
y_{j_k}^{(k)} = \frac{j_k}{M_k}
\end{equation*}
for $i \in \{ 0, 1, \ldots, N \}$, $j_k \in \{ 0, 1, \ldots, M_k \}$ and $k \in \{ 1, \ldots, d \}$.

For $n \in \{ 1, 2, \ldots, N \}$, we define
\begin{equation*}
t_i^n = i \Delta_n = i \cdot \frac{1}{N} \biggl\lfloor \frac{N}{n} \biggr\rfloor,
\quad 
i \in \{ 0, \ldots, n \}
\end{equation*}
and for a process $\{ Z_t \}_{t \in [0,1]}$,
we set $\Delta_i^n Z = Z_{t_i^n} -Z_{t_{i-1}^n}$.

For $h \in (0,1)$, $a, b, c \in (0, \infty)$ and $d \in \mathbb R$, we define
\begin{equation*}
h^{a \tand b} =  
\begin{cases}
h^{a}, & a < b,
\\
-h^{b} \log (h), & a = b,
\\
h^{b}, & a > b,
\end{cases}
\end{equation*}
and $h^{d+ c(a \tand b)} = h^d \cdot (h^c)^{a \tand b}$.
Furthermore, for $L \in (1,\infty)$, we write
\begin{equation*}
L^{a \tand b} = \frac{1}{(1/L)^{a \tand b}} =
\begin{cases}
L^{a}, & a < b,
\\
L^{b} /\log (L), & a = b,
\\
L^{b}, & a > b.
\end{cases}
\end{equation*}

\subsection{Volatility change point detection}
In this subsection, we consider the following hypothesis testing problem
in order to investigate whether the volatility function $\sigma(t)$ in SPDE \eqref{spde} 
changes over time.
\begin{equation}
\left \{
\begin{gathered}
H_0: \text{there exists } \sigma^* \in (0,\infty) \text{ such that }
\sigma(t) = \sigma^* \text{ over } t \in [0,1]
\\
\text{vs.}
\\
H_1: \text{there exist } r \in \mathbb N \text{ and }
0 = \tau_0 < \tau_1 < \tau_2 < \cdots < \tau_r < \tau_{r+1} = 1
\text{ such that}
\\
\sigma(t) = \sum_{j = 1}^{r+1} \sigma_j^{\dag} \ind_{[\tau_{j-1}, \tau_{j})}(t),
\end{gathered}
\label{HTP}
\right.
\end{equation}
where $\ind_A$ stands for the indicator function of $A$,  
$\sigma_1^{\dag}, \ldots, \sigma_{r+1}^\dag \in (0,\infty)$
and $\sigma_j^{\dag} \neq \sigma_{j+1}^{\dag}$ for $j \in \{ 1,\ldots, r \}$. 
For convenience, we write $[\tau_r, \tau_{r+1}) = [\tau_r,1]$.
Note that $r$, $\tau_1,\ldots, \tau_r$, 
$\sigma_1^\dag, \ldots, \sigma_{r+1}^\dag$ and $\sigma^*$ are unknown.
We suppose that there is no change in the parameters 
except for the volatility $\sigma(t)$ over $t \in [0,1]$.

Let  $M_{(1)} = \min \{ M_1,\ldots, M_d \}$.
We assume that the true value $\kappa^*$ of $\kappa$ belongs to the interior of
a compact convex subset of $\mathbb R^d$ and the following conditions hold.
\begin{description}
\item[\textbf{[A1]}]
The initial value $X_0$ of SPDE \eqref{spde} satisfies the following (i)--(iii). 
\begin{enumerate}
\item[(i)]
For $\alpha \in [0,\infty) \cap (d/2-1, \infty)$, 
either of the following conditions holds.
\begin{enumerate}
\item[a)]
$\EE[\langle X_0, e_l \rangle] = 0$ for all $l \in \mathbb N^d$ and
$\sup_{l \in \mathbb N^d} \lambda_l^{1+\alpha} \EE[\langle X_0, e_l \rangle^2] < \infty$.

\item[b)]
$\EE[\| A_\theta^{(1+\alpha)/2} X_0 \|^2 ] < \infty$.

\end{enumerate}

\item[(ii)]
$\{ \langle X_0, e_l \rangle \}_{l \in \mathbb N^d}$ are independent.

\item[(iii)]
There exists $\ell \in \mathbb N^d$ such that 
$\EE[\langle X_0, e_{\ell} \rangle^4] < \infty$.
\end{enumerate}

\item[\textbf{[A2]}]
There exist a positive sequence $\{ R_{M,N} \}$ and an estimator $\widehat \kappa$
such that $R = R_{M,N} \to \infty$ and 
\begin{equation*}
R (\widehat \kappa -\kappa^*) = \Op(1)
\end{equation*}
as $N \to \infty$ and 
$M_{(1)} \to \infty$.

\item[\textbf{[A3]}]
For a positive sequence $R = R_{M,N}$ obtained from [A2], 
\begin{equation*}
\frac{n^2 \Delta_n^{(1+\alpha-d/2) \tand 1}}{R^2} \to 0,
\quad
\frac{n^{3/2}}{M_{(1)}^{(2(1+\alpha)-d) \tand 2}} \to 0
\end{equation*}
as $n \to \infty$, $M_{(1)} \to \infty$ and $R \to \infty$.
\end{description}

Let $\beta_{\ell}^* \in (0,\infty)$ be the true value of 
$\beta_{\ell}(t) = \sigma(t) \gamma_{\ell}^{-\alpha/2}$ under $H_0$.
We additionally assume the following condition.

\begin{description}
\item[\textbf{[B]}]
For $\ell \in \mathbb N^d$ obtained from [A1]-(iii), there exists an estimator 
$\widehat \beta_{\ell}^2$ such that 
$\widehat \beta_{\ell}^2 \pto (\beta_{\ell}^*)^2$ under $H_0$.
\end{description}

\begin{rmk}[]\label{rmk1}
\begin{enumerate}
\item[(i)]
[A1] is a standard regular condition in the statistical inference 
for linear parabolic SPDEs based on high frequency spatio-temporal data. 
See \cite{Bibinger_Trabs2020}, \cite{TKU2023b} and \cite{Bossert2024}.

\item[(ii)]
Although it is not stated explicitly in [A2], 
we need an estimator of the damping parameter $\alpha$ 
in order to estimate $\kappa$ if $\alpha$ is unknown. 
Furthermore, it is possible to construct the estimator $\widehat \kappa$ 
which satisfies [A2] under either $H_0$ or $H_1$. 
See \cite{Bibinger_Bossert2023}, \cite{Hildebrandt_Trabs2021}, 
\cite{Bossert2024}, \cite{TKU2025arXiv1}, Appendices \ref{secA} and \ref{secB}
for the construction of the estimator $\widehat \kappa$ under $H_0$ or $H_1$.

\item[(iii)]
[A3] is a sufficient condition for the balance of $n$, $N$ and $M$ 
to control the approximate coordinate process \eqref{approx_cp} below.
See Proposition \ref{prop0} below for details.

\item[(iv)]
We suppose that [A1]--[A3] hold, 
and define $\widehat \beta_{\ell}^2 
= \sum_{i=1}^n (\Delta_i^n \widehat x_{\ell})^2$
for $\widehat x_{\ell}$ given by \eqref{approx_cp} below. 
Since it follows that under $H_0$,
\begin{equation*}
\sum_{i=1}^n (\Delta_i^n x_{\ell})^2 \pto (\beta_{\ell}^*)^2,
\end{equation*}
we see from Proposition \ref{prop0} below that 
$\widehat \beta_{\ell}^2 \pto (\beta_{\ell}^*)^2$ 
under $H_0$.
Hence, we can construct the estimator $\widehat \beta_{\ell}^2$ 
which satisfies [B].

\item[(v)]
Since we assume no change in the parameters except for the volatility $\sigma(t)$, 
a change in $\beta_l(t)$ implies a corresponding change in the volatility $\sigma(t)$.
\end{enumerate}
\end{rmk}

The CUSUM test statistic
\begin{equation*}
\frac{1}{\widehat \beta_{\ell}^2} \sqrt{\frac{n}{2}} 
\max_{1 \le k \le n}
\biggl| \sum_{i=1}^k (\Delta_i^n x_{\ell})^2 
-\frac{k}{n} \sum_{i=1}^n (\Delta_i^n x_{\ell})^2 \biggr|
\end{equation*}
is useful as a test statistic to detect changes 
for the volatility $\beta_{\ell} (t) = \sigma(t) \gamma_{\ell}^{-\alpha/2}$
in the discretely observed Ornstein-Uhlenbeck dynamics $x_\ell(t)$ 
given in \eqref{OU-dyna}.
However, we cannot directly use this test statistic 
since the coordinate process $x_l(t)$ is represented as 
\begin{equation*}
x_l(t) = \langle X_t, e_l \rangle 
= \int_D X_t(y) e_l(y; \kappa) \exp(\kappa^\TT y) \dd y,
\end{equation*}
$\kappa$ is unknown, 
and continuous spatio-temporal data are not observable.
We therefore construct an approximate process of the coordinate process $x_l(t)$ 
using the estimator $\widehat \kappa$ in the assumption [A2]
and discrete spatio-temporal data.

Let $\mathbb M_d = \{ 1,\ldots, M_1\} \times \cdots \times \{ 1,\ldots, M_d\}$
and $D_j = [y_{j_1-1}^{(1)}, y_{j_1}^{(1)}] \times \cdots 
\times [y_{j_d-1}^{(d)}, y_{j_d}^{(d)}]$ for $j = (j_1, \ldots, j_d) \in \mathbb M_d$. 
For $t \in [0,1]$ and $l \in \mathbb N^d$, we define
\begin{align}
\widehat x_l(t) 
&= \sum_{j \in \mathbb M_d} \int_{D_j} X_t(y_{j}) 
e_l(y;\widehat \kappa) \exp(\widehat \kappa^\TT y)\dd y
\nonumber
\\
&= \sum_{j_1 = 1}^{M_1} \cdots \sum_{j_d = 1}^{M_d} 
X_t(y_{j_1}^{(1)},\ldots,y_{j_d}^{(d)})
\delta_{j_1}^{(1)} g_{l_1}(\widehat \kappa_1) 
\cdots \delta_{j_d}^{(d)} g_{l_d}(\widehat \kappa_d),
\label{approx_cp}
\end{align}
where $\widehat \kappa = (\widehat \kappa_1,\ldots, \widehat \kappa_d)^\TT$,
\begin{align}
g_p(x:a) &= 
\frac{\sqrt{2}\ee^{ax/2}}{(a/2)^2+(\pi p)^2}
\biggl( \frac{a}{2} \sin(\pi p x) - \pi p \cos(\pi p x) \biggr),
\label{gl}
\\
\delta_{j_k}^{(k)} g_{p}(a) &= g_{p}(y_{j_k}^{(k)}:a) - g_{p}(y_{j_k-1}^{(k)}:a)
\nonumber
\end{align}
for $a, x \in \mathbb R$, $p \in \mathbb N$, $j_k \in \{ 1,\ldots, M_k \}$ and $k \in \{ 1,\ldots, d \}$.

We define the test statistic for the change of the volatility function $\sigma(t)$
as follows.
\begin{equation*}
T_n = \frac{1}{\widehat \beta_{\ell}^2} \sqrt{\frac{n}{2}} 
\max_{1 \le k \le n}
\biggl| \sum_{i=1}^k (\Delta_i^n \widehat x_{\ell})^2 
-\frac{k}{n} \sum_{i=1}^n (\Delta_i^n \widehat x_{\ell})^2 \biggr|.
\end{equation*}
Let $\{ B^\circ(t) \}_{t \in [0,1]}$ be a Brownian bridge on $[0,1]$.
We then obtain the following result.
\begin{thm}\label{th1}
Assume that [A1]--[A3] and [B] hold. Then, it holds that under $H_0$,
\begin{equation*}
T_n \dto \sup_{t \in [0,1]}|B^\circ(t)|.
\end{equation*}
\end{thm}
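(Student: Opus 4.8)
The plan is to reduce $T_n$ to a CUSUM statistic built from the true coordinate process $x_\ell$, to show that the mean-reverting drift $-\lambda_\ell x_\ell$ and the initial value in $x_\ell$ are asymptotically negligible, and then to read off the limit from Donsker's invariance principle and the continuous mapping theorem.

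\emph{Step 1 (reduction to $x_\ell$).} Write $V_k(z)=\sum_{i=1}^k(\Delta_i^n z)^2-\frac kn\sum_{i=1}^n(\Delta_i^n z)^2$, so that $T_n=\frac1{\widehat\beta_\ell^2}\sqrt{n/2}\,\max_{1\le k\le n}|V_k(\widehat x_\ell)|$. By [B], $\widehat\beta_\ell^2\pto(\beta_\ell^*)^2>0$, so by Slutsky's lemma it suffices to analyse $\frac1{(\beta_\ell^*)^2}\sqrt{n/2}\,\max_{1\le k\le n}|V_k(x_\ell)|$, provided $\sqrt n\,\max_{1\le k\le n}|V_k(\widehat x_\ell)-V_k(x_\ell)|\pto0$. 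Factoring $(\Delta_i^n\widehat x_\ell)^2-(\Delta_i^n x_\ell)^2=(\Delta_i^n\widehat x_\ell-\Delta_i^n x_\ell)(\Delta_i^n\widehat x_\ell+\Delta_i^n x_\ell)$ and applying Cauchy--Schwarz reduces this to $n\sum_{i=1}^n(\Delta_i^n\widehat x_\ell-\Delta_i^n x_\ell)^2\pto0$ together with $\sum_{i=1}^n(\Delta_i^n x_\ell)^2=\Op(1)$: the first is precisely the control of the approximate coordinate process \eqref{approx_cp} furnished by Proposition \ref{prop0} under the balance condition [A3] (with $\widehat\kappa$ from [A2]), and the second is contained in Remark \ref{rmk1}(iv).

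\emph{Step 2 (removing the drift and the initial value).} Under $H_0$, \eqref{OU-dyna} gives $\Delta_i^n x_\ell=\beta_\ell^*\,\Delta_i^n w_\ell+a_i^n$ with $a_i^n=-\lambda_\ell\int_{t_{i-1}^n}^{t_i^n}x_\ell(u)\,\dd u$, hence $(\Delta_i^n x_\ell)^2=(\beta_\ell^*)^2(\Delta_i^n w_\ell)^2+2\beta_\ell^*\,\Delta_i^n w_\ell\,a_i^n+(a_i^n)^2$. Using $\sup_{u\in[0,1]}\EE[x_\ell(u)^2]<\infty$ (a consequence of [A1] and $\lambda_\ell>0$), Cauchy--Schwarz gives $\sum_{i=1}^n(a_i^n)^2=\Op(\Delta_n)$, so $\sqrt n$ times the CUSUM of $(a_i^n)^2$ is $\op(1)$ since $n\Delta_n\to1$. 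For the cross term, split $a_i^n=-\lambda_\ell\Delta_n\,x_\ell(t_{i-1}^n)-\lambda_\ell\int_{t_{i-1}^n}^{t_i^n}(x_\ell(u)-x_\ell(t_{i-1}^n))\,\dd u$: the first piece produces the martingale transform $-\lambda_\ell\Delta_n\sum_{i\le k}x_\ell(t_{i-1}^n)\Delta_i^n w_\ell$, which is $\Op(\Delta_n)$ uniformly in $k$ by Doob's inequality, while the second piece, by Cauchy--Schwarz and $\EE[(x_\ell(u)-x_\ell(t_{i-1}^n))^2]=\OO(\Delta_n)$, is $\Op(1/n)$ uniformly in $k$; hence $\sqrt n$ times the CUSUM of the cross term is $\op(1)$. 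Therefore $\frac1{(\beta_\ell^*)^2}\sqrt{n/2}\,\{V_{\lfloor ns\rfloor}(x_\ell)\}_{s\in[0,1]}$ has the same weak limit as $\sqrt{n/2}\,\{V_{\lfloor ns\rfloor}(w_\ell)\}_{s\in[0,1]}$.

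\emph{Step 3 (Donsker and continuous mapping).} The $\xi_i:=(\Delta_i^n w_\ell)^2/\Delta_n$ are i.i.d.\ $\chi^2_1$ with mean $1$ and variance $2$, and $V_k(w_\ell)=\Delta_n\bigl(\sum_{i=1}^k(\xi_i-1)-\frac kn\sum_{i=1}^n(\xi_i-1)\bigr)$. By Donsker's invariance principle, $\bigl\{(2n)^{-1/2}\sum_{i=1}^{\lfloor ns\rfloor}(\xi_i-1)\bigr\}_{s\in[0,1]}\dto\{W(s)\}_{s\in[0,1]}$ in $D[0,1]$ for a standard Brownian motion $W$, whence $\bigl\{(2n)^{-1/2}\Delta_n^{-1}V_{\lfloor ns\rfloor}(w_\ell)\bigr\}_s\dto\{W(s)-sW(1)\}_s=\{B^\circ(s)\}_s$. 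Since $\sqrt{n/2}\cdot\sqrt{2n}\,\Delta_n=n\Delta_n\to1$, Steps 1--2 give $T_n=n\Delta_n\,\sup_{s\in[0,1]}\bigl|(2n)^{-1/2}\Delta_n^{-1}V_{\lfloor ns\rfloor}(w_\ell)\bigr|+\op(1)$; as $f\mapsto\sup_{s\in[0,1]}|f(s)|$ is continuous on $D[0,1]$ at continuous paths, the continuous mapping theorem, together with $n\Delta_n\to1$ and Slutsky's lemma, yields $T_n\dto\sup_{t\in[0,1]}|B^\circ(t)|$.

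\emph{Main obstacle.} The one genuinely delicate point is the cross term in Step 2: a direct Cauchy--Schwarz estimate combining $\sum_i(\Delta_i^n w_\ell)^2=\Op(1)$ with $\sum_i(a_i^n)^2=\Op(1/n)$ gives only $\Op(1/\sqrt n)$, which is $\Op(1)$ — not $\op(1)$ — after multiplication by $\sqrt n$; one must isolate the leading martingale transform $\Delta_n\sum_{i\le k}x_\ell(t_{i-1}^n)\Delta_i^n w_\ell$ and recover the extra $\sqrt{\Delta_n}$ factor through Doob's maximal inequality before the residual can be dispatched by crude bounds. (Alternatively, Steps 2--3 can be replaced by an appeal to a functional central limit theorem for the realized volatility of a continuous semimartingale; ordinary weak convergence suffices here because $\sigma$ is constant under $H_0$.) Step 1 is entirely outsourced to Proposition \ref{prop0} and the assumptions [A1]--[A3].
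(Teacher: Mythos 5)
Your Steps 2 and 3 (the analysis of the oracle statistic built from the true coordinate process $x_\ell$) are sound and form a legitimate alternative to the paper's Proposition \ref{prop1}: where the paper decomposes $\Delta_i^n x_\ell$ through the exact Ornstein--Uhlenbeck transition ($\zeta_i+\eta_i$) and invokes a martingale functional CLT, you use the integral form of the SDE together with Donsker's theorem for the i.i.d.\ $\chi^2_1$ variables $(\Delta_i^n w_\ell)^2/\Delta_n$, and your treatment of the drift cross term via the martingale transform and Doob's inequality is correct.

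The genuine gap is in Step 1. Your Cauchy--Schwarz reduction requires $n\sum_{i=1}^n(\Delta_i^n\widehat x_\ell-\Delta_i^n x_\ell)^2\pto0$, which you attribute to Proposition \ref{prop0} and [A3]. But Proposition \ref{prop0} asserts only $\sqrt n\,\bigl(\sum_{i}(\Delta_i^n\widehat x_\ell)^2-\sum_{i}(\Delta_i^n x_\ell)^2\bigr)=\op(1)$, a statement about the difference of the two realized volatilities rather than the sum of squared increment differences, and it is itself obtained in the paper as a byproduct of the argument you are trying to bypass. More importantly, the bound you need is not available under [A1]--[A3]. Writing $\Delta_i^n\widehat x_\ell-\Delta_i^n x_\ell=S_{1,i}+S_{2,i}$ as in the paper ($S_{1,i}$ the $\widehat\kappa$-estimation error, $S_{2,i}$ the spatial-discretization error), one does get $n\sum_i S_{1,i}^2\pto0$ from the first condition in [A3]; but for $S_{2,i}$ only $\sum_i S_{2,i}^2=\Op\bigl(n/M_{(1)}^{(2(1+\alpha)-d)\tand 2}\bigr)$ holds, so $n\sum_i S_{2,i}^2=\Op\bigl(n^2/M_{(1)}^{(2(1+\alpha)-d)\tand 2}\bigr)$, whereas [A3] only imposes $n^{3/2}/M_{(1)}^{(2(1+\alpha)-d)\tand 2}\to0$ (for instance with $d=1$, $\alpha=0$ this is $n^2/M$ versus $n^{3/2}/M$, and in the relevant regime $M^2\Delta_n\to\infty$ there is no additional $\Delta_n$-gain in $\EE[S_{2,i}^2]$). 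So your Step 1 silently strengthens [A3]. The cross term $\max_{1\le k\le n}\bigl|\sum_{i\le k}S_{2,i}\,\Delta_i^n x_\ell\bigr|$ cannot be dispatched by Cauchy--Schwarz; it has to be handled exactly the way you handled the drift cross term in Step 2, by exploiting its approximate conditional centering given $\GG_{i-1}$ --- this is the content of \eqref{prop2-pf-3}--\eqref{prop2-pf-4} combined with Lemma \ref{lem3} (Lenglart) and the moment bounds of Lemmas \ref{lem4}--\ref{lem6} in the paper's proof of Proposition \ref{prop2}. Supplying that martingale-type argument for the $S_{2,i}$ cross term is what is missing from your proposal.
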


\begin{rmk}
The distribution of $\sup_{t \in [0,1]}|B^\circ(t)|$ is known as 
the Kolmogorov distribution, and in the hypothesis testing problem \eqref{HTP}, 
the critical value can be calculated from the following equation
(see e.g., (9.40) in \cite{Billingsley1999}).
\begin{equation}\label{Kol}
\begin{split}
\PP \biggl( \sup_{t \in [0,1]}|B^\circ(t)| \le x \biggr) 
&= 1 -2\sum_{n =1}^\infty (-1)^{n-1} \exp(-2 n^2 x^2)
\\
&= \frac{\sqrt{2\pi}}{x} \sum_{n=1}^\infty 
\exp \biggl( -\frac{(2n-1)^2 \pi^2}{8 x^2} \biggr),
\quad
x \in (0,\infty).
\end{split}
\end{equation}
\end{rmk}

\subsection{The power of test statistic}\label{sec2-2}
In this subsection, we consider the hypothesis testing problem \eqref{HTP}
and address the power of the test statistic $T_n$.

We assume the following condition.
\begin{description}
\item[\textbf{[C]}]
For the estimator $\widehat \beta_{\ell}^2$ given in [B], 
there exists $\beta_{\ell}^\dag \in (0, \infty)$ such that
$\widehat \beta_{\ell}^2 \pto (\beta_{\ell}^\dag)^2$ under $H_1$.
\end{description}

\begin{rmk}
\begin{enumerate}
\item[(i)]
Since an estimator of $\kappa$ can be constructed regardless of 
the time dependence of the volatility function $\sigma(t)$, 
we can construct the estimator $\widehat \kappa$ which satisfies [A2] even under $H_1$. 
For details, see Appendices \ref{secA} and \ref{secB}.

\item[(ii)]
We suppose that [A1]--[A3] hold, 
and define $\widehat \beta_{\ell}^2 
= \sum_{i=1}^n (\Delta_i^n \widehat x_{\ell})^2$ and 
$\beta_{j,\ell} = \sigma_j^{\dag} \gamma_{\ell}^{-\alpha/2}$.
We see from Remark \ref{rmk1}-(iv) that 
the estimator $\widehat \beta_{\ell}^2$ satisfies [B].
On the other hand, we see from Proposition \ref{prop0} below and 
\begin{equation*}
\sum_{i=1}^n (\Delta_i^n x_{\ell})^2 
\pto (\beta_{\ell}^\dag)^2 := \gamma_{\ell}^{-\alpha} \int_0^1 \sigma(t)^2 \dd t
= \sum_{j=1}^{r+1} \beta_{j,\ell}^2 (\tau_j -\tau_{j-1}) \in (0,\infty)
\end{equation*}
that $\widehat \beta_{\ell}^2 \pto (\beta_{\ell}^\dag)^2$ under $H_1$
(see, for example, Subsection 5.6.1 in \cite{Jacod_Protter2012}).
Therefore, the estimator $\widehat \beta_{\ell}^2$ also satisfies [C].
\end{enumerate}
\end{rmk}

We obtain the following theorem.
\begin{thm}\label{th2}
Assume that [A1]--[A3] and [C] hold.
Then, it follows that under $H_1$, for any $\epsilon >0$,
\begin{equation*}
\lim_{n \to \infty} \PP( T_n \ge \epsilon ) = 1.
\end{equation*}
\end{thm}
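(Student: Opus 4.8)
The plan is to run the classical consistency argument for CUSUM statistics: under $H_1$ the CUSUM path built from the genuine coordinate process $x_\ell$ converges to a \emph{deterministic, nonzero} function, so the prefactor $\sqrt{n/2}$ forces $T_n$ to explode, while the passage from $x_\ell$ to the observable $\widehat x_\ell$ and the self-normalization by $\widehat\beta_\ell^2$ do no harm. Throughout, write
\[
C_k^{z}=\sum_{i=1}^{k}(\Delta_i^n z)^2-\frac{k}{n}\sum_{i=1}^{n}(\Delta_i^n z)^2,\qquad k\in\{1,\dots,n\},
\]
for $z\in\{x_\ell,\widehat x_\ell\}$, so that $T_n=\widehat\beta_\ell^{-2}\sqrt{n/2}\,\max_{1\le k\le n}|C_k^{\widehat x_\ell}|$.

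First I would reduce to the true coordinate process. By the triangle inequality, $\max_{1\le k\le n}|C_k^{\widehat x_\ell}-C_k^{x_\ell}|\le 2\max_{1\le k\le n}\bigl|\sum_{i=1}^{k}\{(\Delta_i^n\widehat x_\ell)^2-(\Delta_i^n x_\ell)^2\}\bigr|$, and Proposition \ref{prop0} together with [A1]--[A3] bounds the right-hand side by $\op(1)$ (indeed by the sharper $\op(n^{-1/2})$ used in the proof of Theorem \ref{th1}); here [A1]--[A3] and the rate for $\widehat\kappa$ in [A2] remain valid under $H_1$, since only $\sigma$ changes, $\sup_{t\in[0,1]}\sigma(t)<\infty$, and the construction of $\widehat\kappa$ is insensitive to the time dependence of $\sigma$ (see Section \ref{sec2-2} and Appendices \ref{secA}--\ref{secB}). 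Hence $\max_{1\le k\le n}|C_k^{\widehat x_\ell}-C_k^{x_\ell}|=\op(1)$.

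Next I would identify the limit of $C_k^{x_\ell}$ along a well-chosen index. Put $H(u)=\int_0^u\sigma(s)^2\,\dd s$ and $G(u)=H(u)-uH(1)$ on $[0,1]$. Since $n\Delta_n\to1$, the law of large numbers for realized quadratic variations of the Ornstein--Uhlenbeck dynamics \eqref{OU-dyna}---the drift $-\lambda_\ell x_\ell$ contributing only $\Op(\Delta_n)$---gives, for each fixed $u\in[0,1]$ with $k_n=\lf nu\rf$,
\[
\sum_{i=1}^{k_n}(\Delta_i^n x_\ell)^2\pto\gamma_\ell^{-\alpha}H(u),\qquad \sum_{i=1}^{n}(\Delta_i^n x_\ell)^2\pto\gamma_\ell^{-\alpha}H(1)
\]
(cf.\ Subsection 5.6.1 of \cite{Jacod_Protter2012} and Section \ref{sec2-2}); as $k_n/n\to u$, this yields $C_{k_n}^{x_\ell}\pto\gamma_\ell^{-\alpha}G(u)$. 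Under $H_1$ the function $H$ is continuous and piecewise linear with successive slopes $(\sigma_1^\dag)^2,\dots,(\sigma_{r+1}^\dag)^2$, not all equal; if $G\equiv0$ then $H(u)=uH(1)$ for all $u$, which forces every slope to equal $H(1)$ and hence $\sigma$ to be constant, contradicting $H_1$. So I may fix $u^*\in(0,1)$ with $G(u^*)\ne0$, set $c_0=\gamma_\ell^{-\alpha}|G(u^*)|>0$ and $k_n^*=\lf nu^*\rf$, so that $|C_{k_n^*}^{x_\ell}|=c_0+\op(1)$.

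Finally, combining the two steps,
\[
\max_{1\le k\le n}|C_k^{\widehat x_\ell}|\ge |C_{k_n^*}^{\widehat x_\ell}|\ge |C_{k_n^*}^{x_\ell}|-\max_{1\le k\le n}|C_k^{\widehat x_\ell}-C_k^{x_\ell}|=c_0+\op(1),
\]
and, with $\widehat\beta_\ell^2\pto(\beta_\ell^\dag)^2\in(0,\infty)$ from [C],
\[
T_n=\frac{1}{\widehat\beta_\ell^2}\sqrt{\frac n2}\,\max_{1\le k\le n}|C_k^{\widehat x_\ell}|\ge\sqrt{\frac n2}\cdot\frac{c_0+\op(1)}{(\beta_\ell^\dag)^2+\op(1)},
\]
whose right-hand side diverges to $+\infty$ in probability; therefore $\PP(T_n\ge\epsilon)\to1$ for every $\epsilon>0$. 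I expect the only genuinely delicate point to be the first step---checking that Proposition \ref{prop0} and the balance condition [A3] still deliver $\max_k|C_k^{\widehat x_\ell}-C_k^{x_\ell}|=\op(1)$ when $\sigma$ is merely piecewise constant, and that $\widehat\kappa$ keeps its rate under $H_1$---while the remainder is the textbook consistency argument.
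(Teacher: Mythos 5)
Your proposal is correct and its skeleton coincides with the paper's: you reduce $T_n$ to the statistic built from the true coordinate process by noting that $U_n-U_n^*=\op(1)$ continues to hold under $H_1$ (exactly the remark the paper makes after the proof of Proposition \ref{prop2}, since the proof only uses [A1]--[A3] and the validity of $\widehat\kappa$'s rate under $H_1$), you use [C] to control the normalizer, and you conclude by showing that the unnormalized CUSUM converges in probability to a nonzero constant at a well-chosen index, so the factor $\sqrt{n/2}$ forces divergence. The only substantive difference lies in how the non-degeneracy of that limit is established. The paper evaluates $D_{n,k}^*$ at the change points $\tau_1,\dots,\tau_r$, writes the limit vector $(c_1,\dots,c_r)^\TT$ as an explicit matrix acting on $(\beta_{1,\ell}^2,\dots,\beta_{r+1,\ell}^2)^\TT$, and shows by elementary row operations that this vector vanishes only when all $\beta_{j,\ell}^2$ coincide, which is excluded under $H_1$. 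You instead identify the whole limit path $u\mapsto\gamma_\ell^{-\alpha}\bigl(H(u)-uH(1)\bigr)$ with $H(u)=\int_0^u\sigma(s)^2\,\dd s$ and observe that it vanishes identically only if $H$ is linear, i.e.\ only if $\sigma$ is constant; this is shorter and avoids the matrix computation, at the cost of stating (as the paper also implicitly does, via the blockwise convergence $\sum_{i=\lf n\tau_{j-1}\rf+1}^{\lf n\tau_j\rf}(\Delta_i^n x_\ell)^2\pto\beta_{j,\ell}^2(\tau_j-\tau_{j-1})$) the realized-variance law of large numbers along an arbitrary fraction $u$. Both routes deliver the same conclusion; one small precision worth adding to yours is that Proposition \ref{prop0} alone only treats $k=n$, so the uniform-in-$k$ bound $\max_{1\le k\le n}\lvert C_k^{\widehat x_\ell}-C_k^{x_\ell}\rvert=\op(1)$ should be credited to the estimates in the proof of Proposition \ref{prop2} (as you indeed indicate parenthetically), not to Proposition \ref{prop0} itself.
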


\section{Examples}\label{sec3}
In this section, we consider the hypothesis testing problem \eqref{HTP}
for the volatility function in SPDE \eqref{spde} with $d = 1, 2$
and give procedures of the change detection for the volatility.

\subsection{Linear parabolic SPDEs with $d = 1$}\label{sec3-1}
We consider the linear parabolic SPDE
\begin{equation}\label{spde_ex1}
\dd X_t(y) = 
\biggl( \theta_2 \frac{\pd^2}{\pd y^2} +\theta_1 \frac{\pd}{\pd y} 
+ \theta_0 \biggr) X_t(y) \dd t 
+ \sigma(t) \dd B_t(y), 
\quad(t, y) \in [0,1] \times (0,1)
\end{equation}
with an initial value $X_0$ and the Dirichlet boundary condition 
$X_t(0) = X_t(1) = 0$, $t \in [0,1]$, where $\theta_2 \in (0, \infty)$, 
$\theta_1, \theta_0 \in \mathbb R$ and 
$[0,1] \ni t \mapsto \sigma(t) \in (0,\infty)$ is a time-dependent volatility function.
Note that $\sigma(t) = \sigma^*$ under $H_0$ and 
$\sigma(t) = \sum_{j=1}^{r+1} \sigma_j^\dag \ind_{[\tau_{j-1}, \tau_j)}(t)$
under $H_1$ in the hypothesis testing problem \eqref{HTP}.
$\{ B_t \}_{t \ge 0}$ is the cylindrical Brownian motion and is given by
\begin{equation*}
B_t = \sum_{l \in \mathbb N} w_l(t) e_l
\end{equation*}
with $e_l(y) = \sqrt{2} \exp(-\kappa y/2)\sin(\pi l y)$, 
$\kappa = \theta_1/\theta_2$ and 
independent real valued standard Brownian motions $\{ w_l \}_{l \in \mathbb N}$.
Note that $\beta_{\ell}(t) = \sigma(t)$ with $\ell = 1$ in this example.

Suppose that we have discrete observations 
$\mathbf X_{M,N} = \{ X_{t_i^N}(y_j) \}_{0 \le i \le N, 0 \le j \le M}$ with
$t_i^N = i \Delta = i/N$ and $y_j = j/M$.
For $b \in (0,1/2)$, $m \in \{ 1,\ldots, M \}$ and 
$n \in \{1, \ldots, N \}$, we will write the thinned data 
obtained from $\mathbf X_{M,N}$ as 
$\mathbf X_{m,n}^{(b)} = \{ X_{t_i^n}(\widetilde y_j) \}_{0 \le i \le n, 0 \le j \le m}$
with 
\begin{equation*}
t_i^n = i \cdot \frac{1}{N} \biggl\lfloor \frac{N}{n} \biggr\rfloor,
\quad
\widetilde y_j = b + j \cdot \frac{1-2b}{m},
\end{equation*}
where $\widetilde y_j \in \{ y_0, \ldots, y_M \}$.

We construct the test statistic $T_n$ based on the following two methodologies.
\begin{description}
\item[A.]
Methodology based on Bibnger and Trabs \cite{Bibinger_Trabs2020}. 

\item[B.]
Methodology based on Hildebrandt and Trabs \cite{Hildebrandt_Trabs2021}.

\end{description}

\subsubsection{Methodology A}\label{sec3-1-1}
Suppose that we have thinned data $\mathbf X_{m,N}^{(b)}$ 
with $b \in (0,1/2)$ and $m = \OO(N^\rho)$ for some $\rho \in (0,1/2)$.
We set
\begin{equation*}
Z_{j,N} = \frac{1}{N\sqrt{\Delta}}\sum_{i=1}^N (\Delta_i^N X(\widetilde y_j))^2.
\end{equation*}
Let $\Xi$ be a compact convex subset of $\mathbb R \times (0,\infty)$.
We define the estimator 
$(\widehat \kappa, \widehat{V}_0)$ of the parameter $(\kappa, V_0)$ by
\begin{equation*}
(\widehat \kappa, \widehat V_0) = \underset{(\kappa, V_0) \in \Xi}{\mathrm{argmin}} 
\frac{1}{m} \sum_{j=1}^m 
\biggl( Z_{j,N} - \frac{V_0}{\sqrt{\pi}} \exp(-\kappa \widetilde y_j) \biggr)^2
\end{equation*}
obtained from (20) in \cite{Bibinger_Trabs2020}. 
It then holds from Theorem \ref{app_th1} below that under either $H_0$ or $H_1$, 
\begin{equation*}
\sqrt{m N} (\widehat \kappa -\kappa^*) = \Op(1),
\end{equation*}
where $\sqrt{m N} = \OO(N^{(1+\rho)/2})$ for some $\rho \in (0, 1/2)$.
Using the estimator $\widehat \kappa$, 
we construct the approximate process of the coordinate process $x_1$ as follows.
\begin{equation}\label{CP-ex1}
\widehat x_1(t) 
= \sum_{j = 1}^M X_t(y_j) 
\bigl( g_1(y_j:\widehat \kappa) -g_1(y_{j-1}:\widehat \kappa) \bigr),
\quad t > 0,
\end{equation}
where $g_1$ is given in \eqref{gl}. We then define
\begin{equation*}
\langle \widehat x_1 \rangle_{k,n} = \sum_{i=1}^k (\Delta_i^n \widehat x_1)^2
\end{equation*}
and $\widehat \beta_1^2 = \widehat \sigma^2 = \langle \widehat x_1 \rangle_{n,n}$, 
and thus we have 
\begin{equation*}
\widehat \beta_1^2 =
\widehat \sigma^2 \pto V^* = \int_0^1 \sigma(t)^2 \dd t  =
\begin{cases}
(\sigma^*)^2 = (\beta_1^*)^2 & \text{under } H_0,
\\
\displaystyle \sum_{j=1}^{r+1} (\sigma_j^\dag)^2 (\tau_j -\tau_{j-1}) 
= (\beta_1^\dag)^2
& \text{under } H_1
\end{cases}
\end{equation*}
if [A3] holds true, i.e., $\dfrac{n^{3/2}}{N^{1+\rho}} \to 0$ 
and $\dfrac{n^{3/2}}{M} \to 0$.

The test statistic to detect the change in the volatility is given by
\begin{align*}
T_n &= \frac{1}{\widehat \sigma^2} \sqrt{\frac{n}{2}} 
\max_{1 \le k \le n}
\biggl| \sum_{i=1}^k (\Delta_i^n \widehat x_1)^2 
-\frac{k}{n} \sum_{i=1}^n (\Delta_i^n \widehat x_1)^2 \biggr|
\\
&= \frac{1}{\langle \widehat x_1 \rangle_{n,n}} \sqrt{\frac{n}{2}} 
\max_{1 \le k \le n}
\biggl| \langle \widehat x_1 \rangle_{k,n} 
-\frac{k}{n} \langle \widehat x_1 \rangle_{n,n} \biggr|
\\
&= \sqrt{\frac{n}{2}} 
\max_{1 \le k \le n}
\biggl| \frac{\langle \widehat x_1 \rangle_{k,n}}{\langle \widehat x_1 \rangle_{n,n}} 
-\frac{k}{n}  \biggr|,
\end{align*}
which converges to the supremum of a Brownian bridge on $[0,1]$ 
when [A3] holds.

\subsubsection{Methodology B}\label{sec3-1-2}
Suppose that we have thinned data $\mathbf X_{m,N}^{(b)}$ 
with $b \in (0,1/2)$, $m = \OO(\sqrt{N})$ and $N = \OO(m^2)$.
We set
\begin{equation*}
D_{i,j} X = \Delta_i^N X(\widetilde y_j) -\Delta_i^N X(\widetilde y_{j-1}),
\quad 
\widetilde D_{i,j} X = D_{i,j} X +D_{i+1,j} X
\end{equation*}
and $\overline y_j = (\widetilde y_{j-1} +\widetilde y_j)/2$.
Let $\Xi$ be a compact convex subset of $\mathbb R \times (0,\infty)^2$
and $\widehat V=\widehat \beta_1^2$.
We define 
the estimator $(\widehat \kappa, \widehat{\theta}_2, \widehat{V})$ by
\begin{align*}
(\widehat \kappa, \widehat \theta_2, \widehat V) 
&= \underset{(\kappa, \theta_2, V) \in \Xi}{\mathrm{argmin}} \Biggl\{
\frac{1}{m} \sum_{j=1}^m  
\biggl( \frac{1}{N\sqrt{\Delta}} \sum_{i=1}^N (D_{i,j} X)^2 
-V \exp(-\kappa \overline y_j) \psi_r(\theta_2) \biggr)^2
\\
&\qquad+\frac{1}{m} \sum_{j=1}^m  
\biggl( \frac{1}{N\sqrt{2\Delta}} \sum_{i=1}^{N-1} (\widetilde D_{i,j} X)^2 
-V \exp(-\kappa \overline y_j) 
\psi_{r/\sqrt{2}}(\theta_2) \biggr)^2
\Biggr\},
\end{align*}
where 
\begin{equation*}
\psi_r(\theta_2) = 
\frac{2}{\sqrt{\pi \theta_2}} \biggl(
1 -\exp\Bigl(-\frac{r^2}{4 \theta_2}\Bigr) 
+\frac{r}{\sqrt{\theta_2}} \int_{r/\sqrt{4\theta_2}}^\infty \ee^{-x^2} \dd x
\biggr)
\end{equation*}
obtained from (17) in \cite{Hildebrandt_Trabs2021}. 
It then holds from Theorem \ref{app_th2} below that under either $H_0$ or $H_1$, 
\begin{equation*}
\sqrt{m N} (\widehat \kappa -\kappa^*) = \Op(1),
\quad
\sqrt{m N} ( \widehat V -V^* ) = \Op(1),
\end{equation*}
where $\sqrt{m N} = \OO(N^{3/4})$. 
By using the estimator $\widehat \kappa$, 
we construct the approximate process of the coordinate process $x_1$ 
given in \eqref{CP-ex1}. 
We then define the test statistic of the change for the volatility by
\begin{equation*}
T_n = \frac{1}{\widehat V} \sqrt{\frac{n}{2}} 
\max_{1 \le k \le n}
\biggl| \sum_{i=1}^k (\Delta_i^n \widehat x_1)^2 
-\frac{k}{n} \sum_{i=1}^n (\Delta_i^n \widehat x_1)^2 \biggr|,
\end{equation*}
which converges to the supremum of a Brownian bridge on $[0,1]$ 
when [A3] holds.
Note that, in this case, the condition [A3] is given by
$\dfrac{n^{3/2}}{N^{3/2}} \to 0$ and $\dfrac{n^{3/2}}{M} \to 0$.

\subsection{Linear parabolic SPDEs with $d = 2$}
We consider the linear parabolic SPDE
\begin{equation}\label{spde_ex3-2}
\dd X_t(y,z)
= \biggl\{ 
\theta_2 
\biggl( \frac{\pd^2}{\pd y^2} +\frac{\pd^2}{\pd z^2}\biggr)
+\theta_{1,1} \frac{\pd}{\pd y} +\theta_{1,2} \frac{\pd}{\pd z}
+ \theta_0 
\biggr\} X_t(y,z) \dd t
+\sigma(t) \dd W_t^{Q}(y,z)
\end{equation}
for $(t,y,z) \in [0,1] \times (0,1)^2$
with an initial value $X_0$ and the Dirichlet boundary condition 
$X_t(y,z) = 0$, $(t,y,z) \in [0,1] \times \pd (0,1)^2$, 
where $\theta_2 \in (0, \infty)$, 
$\theta_1 = (\theta_{1,1}, \theta_{1,2})^\TT \in \mathbb R^2$, 
$\theta_0 \in \mathbb R$ and $[0,1] \ni t \mapsto \sigma(t) \in (0,\infty)$ 
is a time-dependent volatility function.
Note that $\sigma(t) = \sigma^*$ under $H_0$ and 
$\sigma(t) = \sum_{j=1}^{r+1} \sigma_j^\dag \ind_{[\tau_{j-1}, \tau_j)}(t)$
under $H_1$ in the hypothesis testing problem \eqref{HTP}.
The $Q$-Wiener process $W_t^Q$ is defined by
\begin{equation*}
W_t^Q = \sum_{l \in \mathbb N^2} \gamma_l^{-\alpha/2} w_l(t) e_l
\end{equation*}
with an unknown damping parameter $\alpha \in (0,2)$, 
independent real valued standard Brownian motions $\{ w_l \}_{l \in \mathbb N^2}$,
and $e_l(y,z) = e_{l_1}^{(1)}(y) e_{l_2}^{(2)}(z)$ for $l = (l_1,l_2) \in \mathbb N^2$,
where $\kappa = (\kappa_1, \kappa_2)^\TT$, $\kappa_j = \theta_{1,j}/\theta_2$
and $e_{l}^{(j)}(x) = \sqrt{2} \exp(-\kappa_j x/2) \sin(\pi l x)$.
We here set 
$\gamma_l = \lambda_l = \theta_2 \pi^2 |l|_2^2 
+\frac{|\theta_1|_2^2}{4 \theta_2} -\theta_0$ or $\gamma_l = \pi^2 |l|_2^2 +\mu_0$ 
with an unknown parameter $\mu_0 \in (-2\pi^2,\infty)$.
Note that $\beta_{\ell}(t) = \sigma(t) \gamma_{\ell}^{-\alpha/2}$ with $\ell = (1,1)$ 
in this example.
See \cite{TKU2023b}--\cite{TKU2026} 
for parametric estimation of the coefficient parameters in SPDE \eqref{spde_ex3-2}.

We provide the methodology based on Tonaki et al.\cite{TKU2025arXiv1}.
Suppose that we have discrete observations 
$\mathbf X_{M_1,M_2,N} = \{ X_{t_i^N}(y_{j_1}^{(1)},y_{j_2}^{(2)}) \}_{
0 \le i \le N, 0 \le j_1 \le M_1, 0 \le j_2 \le M_2}$ with
\begin{equation*}
t_i^N = i \Delta = \frac{i}{N},
\quad
y_j^{(k)} = \frac{j}{M_k}, 
\quad k \in \{ 1,2 \}.
\end{equation*}
For $b \in (0,1/2)$, $m_k \in \{1, \ldots, M_k \}$, $k \in \{1,2\}$ 
and $n \in \{1, \ldots, N \}$, we will write the thinned data 
obtained from $\mathbf X_{M_1,M_2,N}$ as 
$\mathbf X_{m_1,m_2,n}^{(b)} = 
\{ X_{t_i^n}(\widetilde y_{j_1}^{(1)}, \widetilde y_{j_2}^{(2)}) \}_{
0 \le i \le n, 0 \le j_1 \le m_1, 0 \le j_2 \le m_2}$
with $m_1 = m_2$,
\begin{equation*}
t_i^n = i \cdot \frac{1}{N} \biggl\lfloor \frac{N}{n} \biggr\rfloor,
\quad
\widetilde y_j^{(k)} = b + j \cdot \frac{1-2b}{m_1},
\end{equation*}
where $\widetilde y_j^{(k)} \in \{ y_0, \ldots, y_{M_k} \}$.

Suppose that we have two thinned data $\mathbf X_{m_1,m_2,N}^{(b)}$
and $\mathbf X_{m_1',m_2',N'}^{(b)}$ obtained from $\mathbf X_{M_1,M_2,N}$,
where $m_k' = m_k/2$, $m' = m_1' m_2'$, $N' = N/4$, 
$m := m_1 m_2 = \OO(N)$ and $N = \OO(m)$.
For the thinned data $\mathbf X_{m_1,m_2,N}^{(b)}$, we set the triple increments
\begin{align*}
T_{i,j,k} X 
&= \Delta_i^N X(\widetilde y_{j}^{(1)},\widetilde y_{k}^{(2)})
-\Delta_i^N X(\widetilde y_{j-1}^{(1)},\widetilde y_{k}^{(2)})
-\Delta_i^N X(\widetilde y_{j}^{(1)},\widetilde y_{k-1}^{(2)})
+\Delta_i^N X(\widetilde y_{j-1}^{(1)},\widetilde y_{k-1}^{(2)})
\\
&=
\sum_{l_1,l_2 \in \mathbb N} (x_{l_1,l_2}(t_i) -x_{l_1,l_2}(t_{i-1}))
(e_{l_1}^{(1)}(\widetilde y_{j}^{(1)}) -e_{l_1}^{(1)}(\widetilde y_{j-1}^{(1)}))
(e_{l_2}^{(2)}(\widetilde y_{k}^{(2)}) -e_{l_2}^{(2)}(\widetilde y_{k-1}^{(2)})).
\end{align*}
Let $T_{i,j,k}' X$ be the triple increments 
obtained from $\mathbf X_{m_1', m_2', N'}^{(b)}$.
We then define the estimator of $\alpha$ by
\begin{equation*}
\widehat \alpha =
\log \left( \frac{\displaystyle\frac{1}{m' N'} \sum_{k=1}^{m'_2} \sum_{j=1}^{m'_1} 
\sum_{i=1}^{N'} (T_{i,j,k}' X)^2}
{\displaystyle\frac{1}{m N} \sum_{k=1}^{m_2} \sum_{j=1}^{m_1} \sum_{i=1}^N 
(T_{i,j,k}X)^2}
\right)/\log(4)
\end{equation*}
obtained from (3.1) in \cite{TKU2025arXiv1}.
We write
$\overline y_j^{(p)} = (\widetilde y_{j-1}^{(p)} +\widetilde y_{j}^{(p)})/2$
for $p \in \{1,2\}$. 
Let $J_0$ be the Bessel function of the first kind of order $0$:
\begin{equation*}
J_0(x) = 1+\sum_{k=1}^\infty \frac{(-1)^k}{(k!)^2} \Bigl(\frac{x}{2} \Bigr)^{2k}.
\end{equation*}
For $r, \alpha >0$, we define
\begin{equation*}
\psi_{r,\alpha}(\theta_2)
=\frac{2}{\theta_2 \pi}
\int_0^\infty 
\frac{1-\ee^{-x^2}}{x^{1+2\alpha}}
\biggl(
J_0\Bigl(\frac{\sqrt{2}r x}{\sqrt{\theta_2}}\Bigr)
-2J_0\Bigl(\frac{r x}{\sqrt{\theta_2}}\Bigr)+1
\biggr) \dd x.
\end{equation*}
Let $\widetilde T_{i,j,k} X = T_{i,j,k}X +T_{i+1,j,k}X$. We define 
\begin{align*}
&(\widehat \kappa, \widehat \theta_2, \widehat V) 
\\
&= 
\underset{(\kappa, \theta_2, V) \in \Xi}{\mathrm{argmin}}
\Biggl\{
\frac{1}{m} \sum_{k = 1}^{m_2} \sum_{j = 1}^{m_1} 
\biggl(
\frac{1}{N \Delta^{\widehat \alpha}}\sum_{i=1}^{N} (T_{i,j,k}X)^2
-V \exp(-\kappa_1 \overline y_j^{(1)} -\kappa_2 \overline y_k^{(2)}) 
c_\gamma^{\widehat \alpha} \psi_{r,\widehat \alpha}(\theta_2)
\biggr)^2
\\
&\qquad+ 
\frac{1}{m} \sum_{k=1}^{m_2} \sum_{j=1}^{m_1} 
\biggl(
\frac{1}{N (2\Delta)^{\widehat \alpha}}\sum_{i=1}^{N-1} (\widetilde T_{i,j,k}X)^2
-V \exp(-\kappa_1 \overline y_j^{(1)} -\kappa_2 \overline y_k^{(2)}) 
c_\gamma^{\widehat \alpha} \psi_{r/\sqrt{2},\widehat \alpha}(\theta_2)
\biggr)^2 \Biggr\},
\end{align*}
where $\Xi$ is defined in Appendix \ref{secB} below and
$c_\gamma = \lim_{|l|_2 \to \infty} \frac{\gamma_l}{\pi^2 |l|_2^2}$.
Note that $c_\gamma = \theta_2$ for $\gamma_l = \lambda_l$, and 
$c_\gamma = 1$ for $\gamma_l = \pi^2 |l|_2^2 +\mu_0$.
It then follows from Theorem \ref{app_th4} below that under either $H_0$ or $H_1$,
\begin{equation*}
\frac{\sqrt{m N}}{\log(N)} (\widehat \kappa -\kappa^*) 
= \Op(1),
\end{equation*}
where $\sqrt{m N} = \OO(N)$.
Using the estimator $\widehat \kappa = (\widehat \kappa_1, \widehat \kappa_2)$, 
we construct the approximate process of the coordinate process $x_{1,1}$ as follows.
\begin{align*}
\widehat x_{1,1}(t) 
&= \sum_{j = 1}^{M_1} \sum_{k = 1}^{M_2} X_t(y_j^{(1)}, y_k^{(2)}) 
\bigl( g_1(y_j^{(1)}:\widehat \kappa_1) -g_1(y_{j-1}^{(1)}:\widehat \kappa_1) \bigr)
\\
& \qquad \qquad \times 
\bigl( g_1(y_k^{(2)}:\widehat \kappa_2) -g_1(y_{k-1}^{(2)}:\widehat \kappa_2) \bigr).
\end{align*}
We set
\begin{equation*}
\langle \widehat x_{1,1} \rangle_{k,n} = \sum_{i=1}^k (\Delta_i^n \widehat x_{1,1})^2,
\quad
\widehat \beta_{1,1}^2 = \langle \widehat x_{1,1} \rangle_{n,n}.
\end{equation*}
In this case, the condition [A3] is given by 
$\dfrac{n^{3/2}}{(M_1 \land M_2)^{2\alpha \tand 2}} \to 0$
since the convergence
\begin{equation*}
\dfrac{n^2 \Delta_n^{\alpha \tand 1}}{N^2} (\log(N))^2 
= \dfrac{n^2 \Delta_n^{\alpha \tand 1}}{N^2 \Delta_N^{\alpha \tand 1}} 
\cdot \Delta_N^{\alpha \tand 1} (\log(N))^2 
\to 0
\end{equation*}
always holds true. Hence, we have 
\begin{equation*}
\widehat \beta_{1,1}^2 \pto \gamma_{1,1}^{-\alpha} \int_0^1 \sigma(t)^2 \dd t  =
\begin{cases}
(\beta_{1,1}^*)^2 & \text{under } H_0,
\\
\displaystyle \gamma_{1,1}^{-\alpha} 
\sum_{j=1}^{r+1} (\sigma_j^\dag)^2 (\tau_j -\tau_{j-1}) = (\beta_{1,1}^\dag)^2
& \text{under } H_1
\end{cases}
\end{equation*}
when $\dfrac{n^{3/2}}{(M_1 \land M_2)^{2\alpha \tand 2}} \to 0$ holds.
We then define the test statistic of the change for the volatility by
\begin{align*}
T_n &= \frac{1}{\widehat \beta_{1,1}^2} \sqrt{\frac{n}{2}} 
\max_{1 \le k \le n}
\biggl| \sum_{i=1}^k (\Delta_i^n \widehat x_{1,1})^2 
-\frac{k}{n} \sum_{i=1}^n (\Delta_i^n \widehat x_{1,1})^2 \biggr|
\\
&=
\sqrt{\frac{n}{2}} 
\max_{1 \le k \le n}
\biggl| 
\frac{\langle \widehat x_{1,1} \rangle_{k,n}}{\langle \widehat x_{1,1} \rangle_{n,n}}
-\frac{k}{n} 
\biggl|.
\end{align*}

\section{Simulations}\label{sec4}
In this section, we consider the hypothesis testing problem \eqref{HTP}
for linear parabolic SPDE \eqref{spde_ex1}
in order to verify the asymptotic behavior of our test statistic $T_n$.

A numerical solution of SPDE \eqref{spde_ex1} is generated by
\begin{equation*}
\widetilde X_{t_i}(y_j)
= \sum_{l = 1}^{L} x_l (t_i) e_l(y_j), 
\quad i \in \{ 1,\ldots, N \}, j \in \{ 1,\ldots, M \}
\end{equation*}
with
\begin{equation*}
\left\{
\begin{split}
x_l(0) &= \langle X_0, e_l \rangle,
\nonumber
\\
x_l (t_i) &= 
\ee^{-\lambda_l \Delta} x_l (t_{i-1})
+\sigma(t_{i-1}) \sqrt{\frac{1-\ee^{-2\lambda_l \Delta}}{2\lambda_l}} Z_{i,l},
\quad i \in \{1,\ldots, N \},
\end{split}
\right.
\end{equation*}
where $\{ Z_{i,l} \}$ are independent standard normal random variables.
We set $N = 10^4$, $M = 10^4$, $L = 10^5$, $X_0 = 0$
and the true values of the parameters 
$(\theta_0^*, \theta_1^*, \theta_2^*) = (0, 0.2, 0.2)$ in SPDE \eqref{spde_ex1}.

\begin{figure}[h]
\captionsetup{margin=5pt}
\captionsetup[sub]{margin=5pt}
 \begin{minipage}[t]{0.32\linewidth}
  \centering
  \includegraphics[keepaspectratio, width=5.2cm]{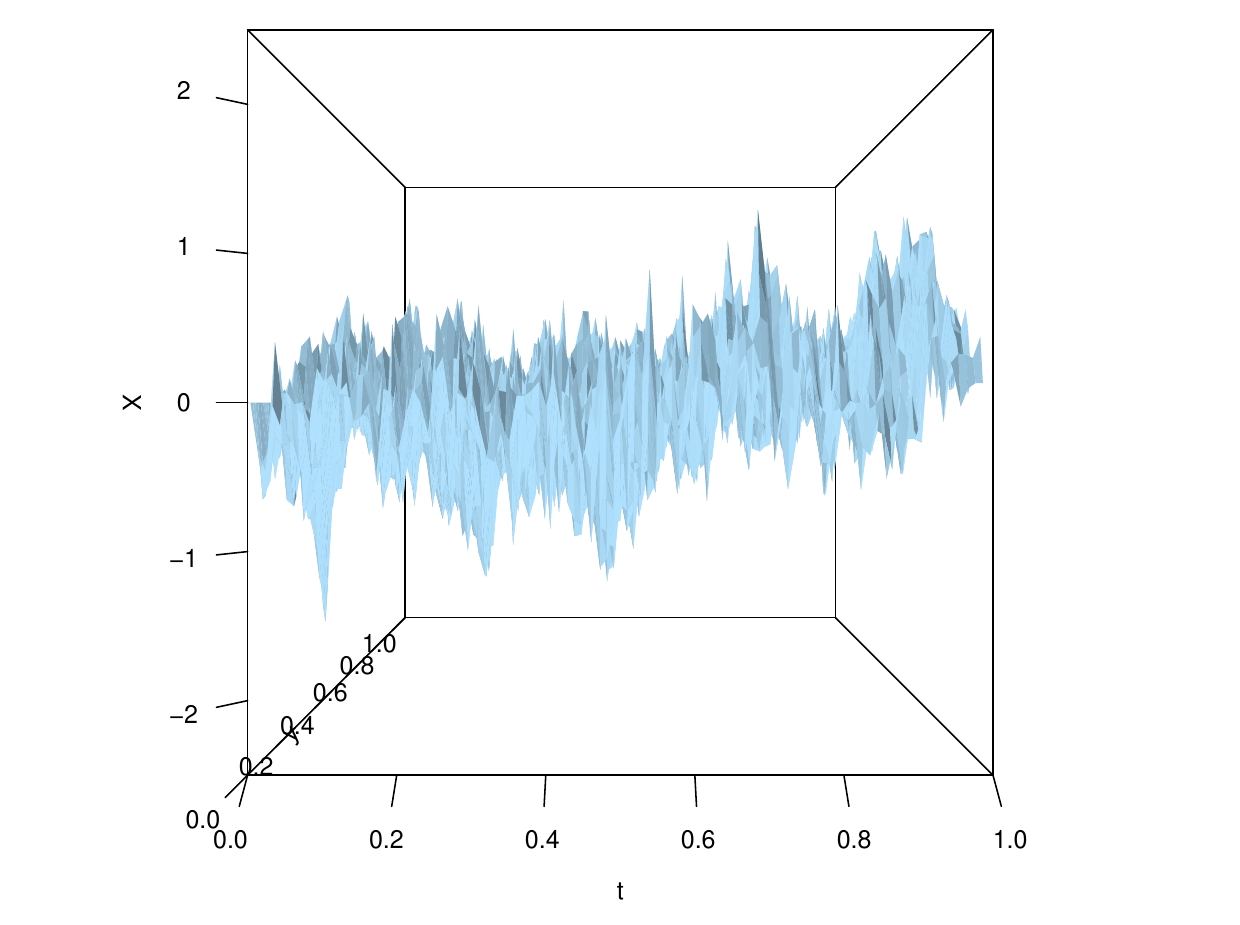}
  \subcaption{$\tau=1$ (No change points).}
 \end{minipage}
 \begin{minipage}[t]{0.32\linewidth}
  \centering
  \includegraphics[keepaspectratio, width=5.2cm]{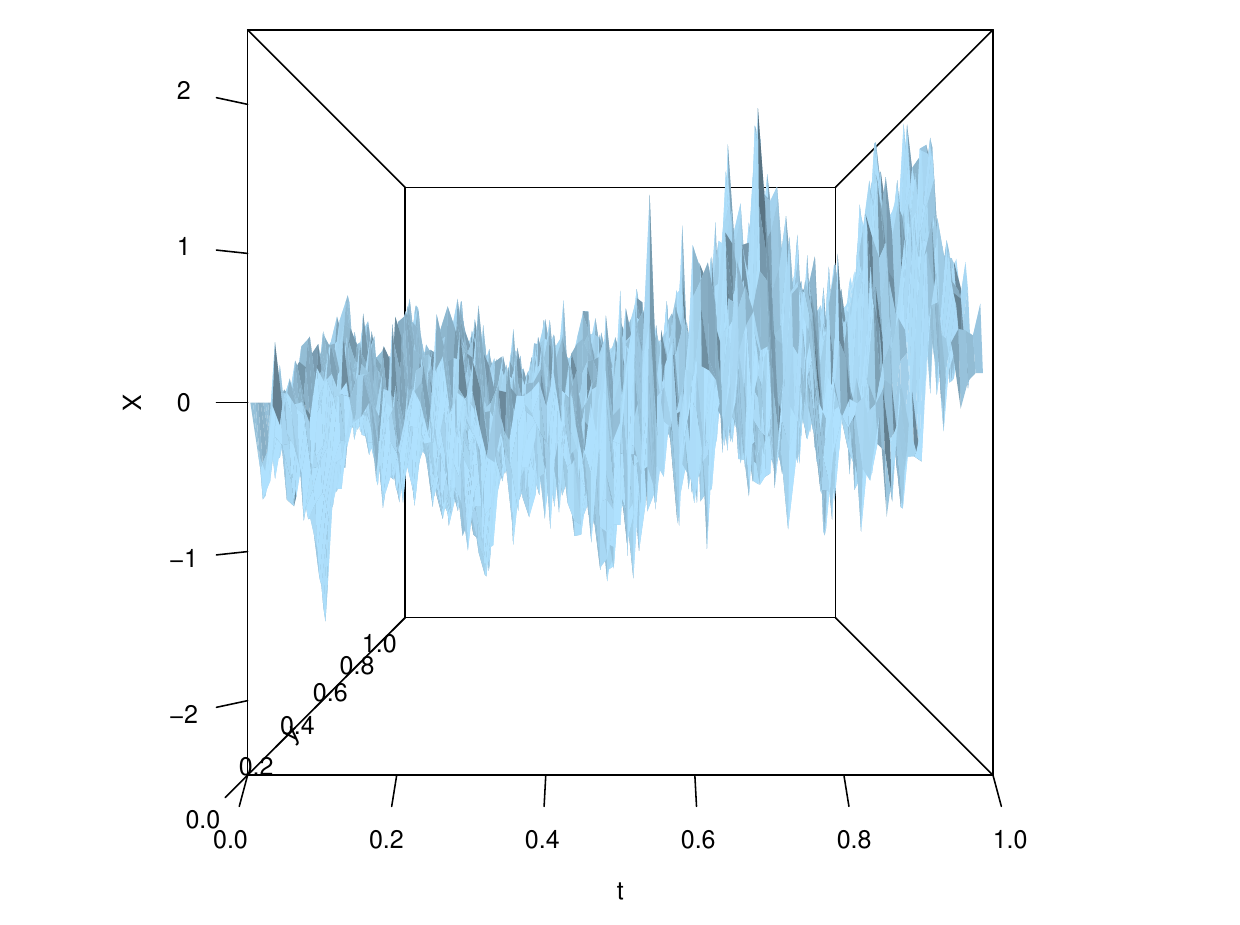}
  \subcaption{$\tau=0.5$, $\sigma_2^\dag = 1.5$.}
 \end{minipage}
 \begin{minipage}[t]{0.32\linewidth}
  \centering
  \includegraphics[keepaspectratio, width=5.2cm]{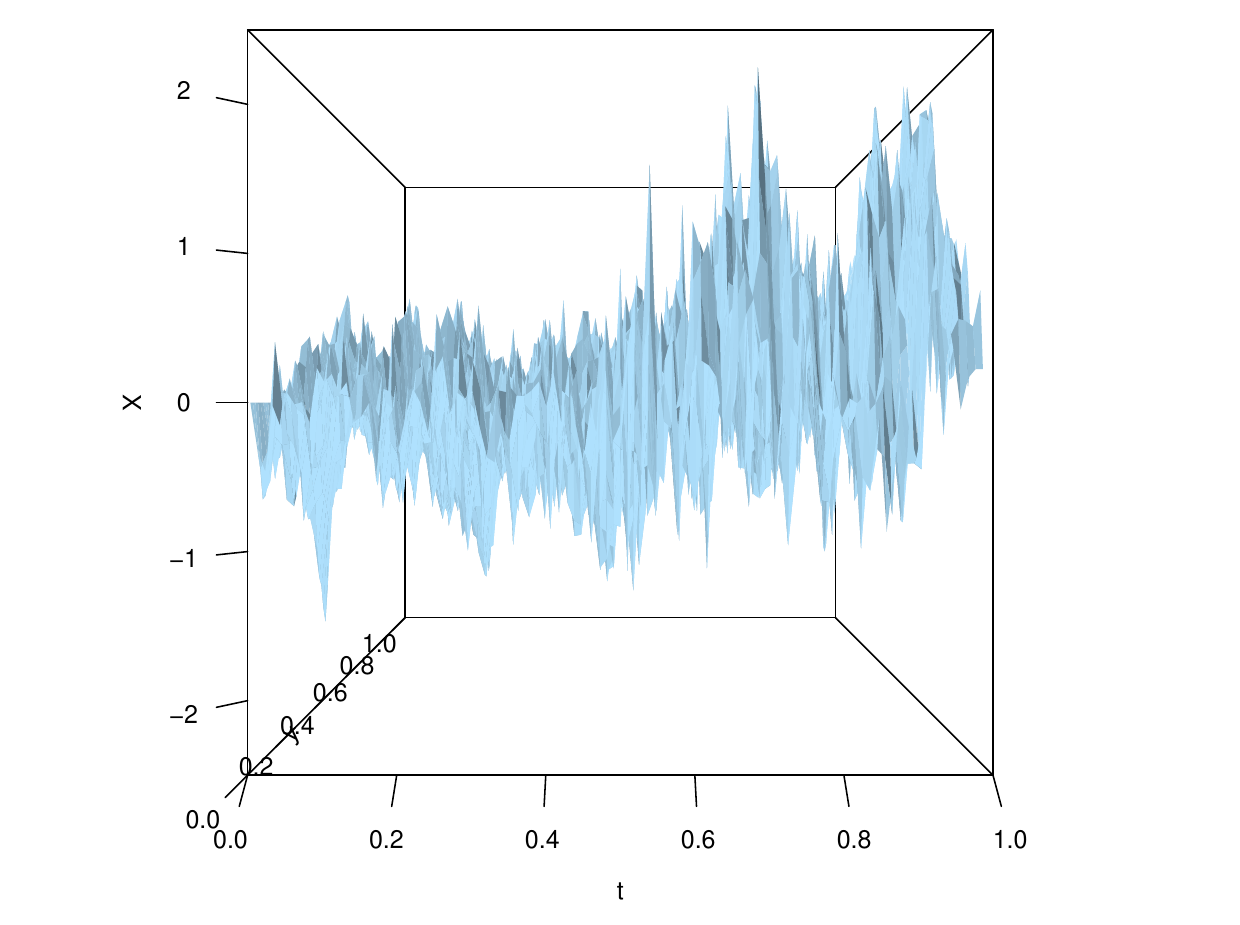}
  \subcaption{$\tau=0.5$, $\sigma_2^\dag = 1.7$.}
 \end{minipage}\\
  \begin{minipage}[t]{0.32\linewidth}
  \centering
  \includegraphics[keepaspectratio, width=5.2cm]{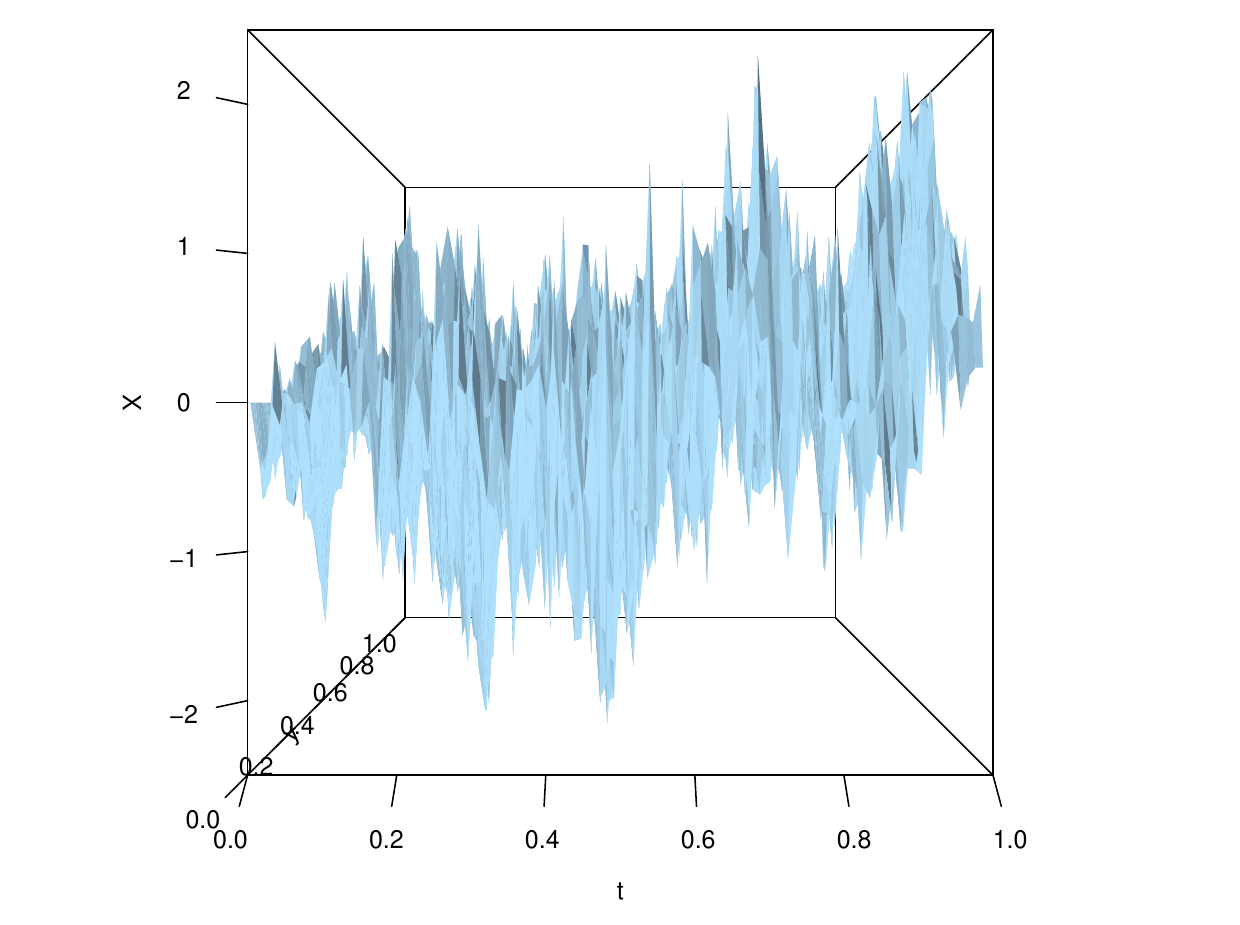}
  \subcaption{$\tau=0.1$, $\sigma_2^\dag = 1.8$.}
 \end{minipage}
 \begin{minipage}[t]{0.32\linewidth}
  \centering
  \includegraphics[keepaspectratio, width=5.2cm]{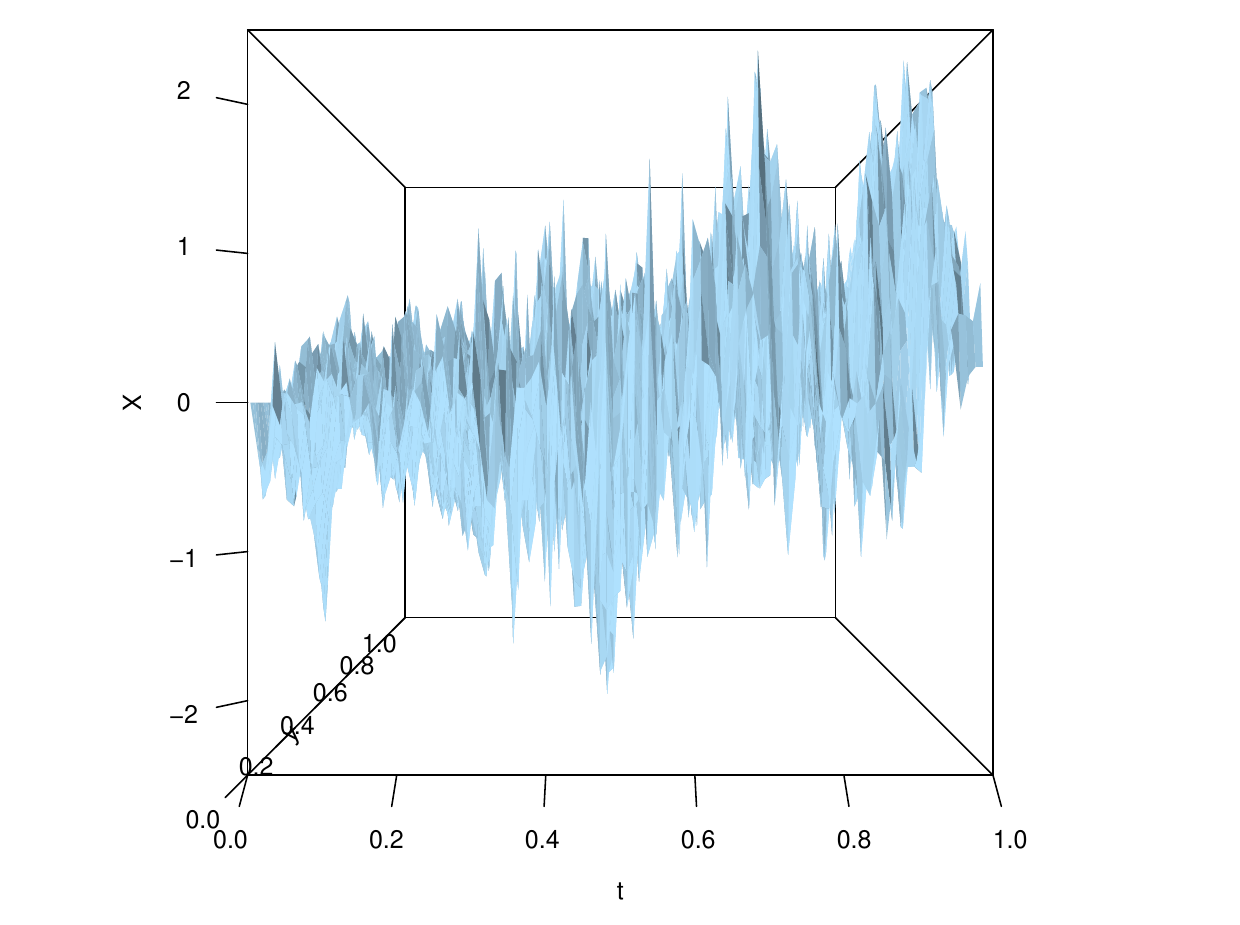}
  \subcaption{$\tau=0.3$, $\sigma_2^\dag = 1.8$.}
 \end{minipage}
 \begin{minipage}[t]{0.32\linewidth}
  \centering
  \includegraphics[keepaspectratio, width=5.2cm]{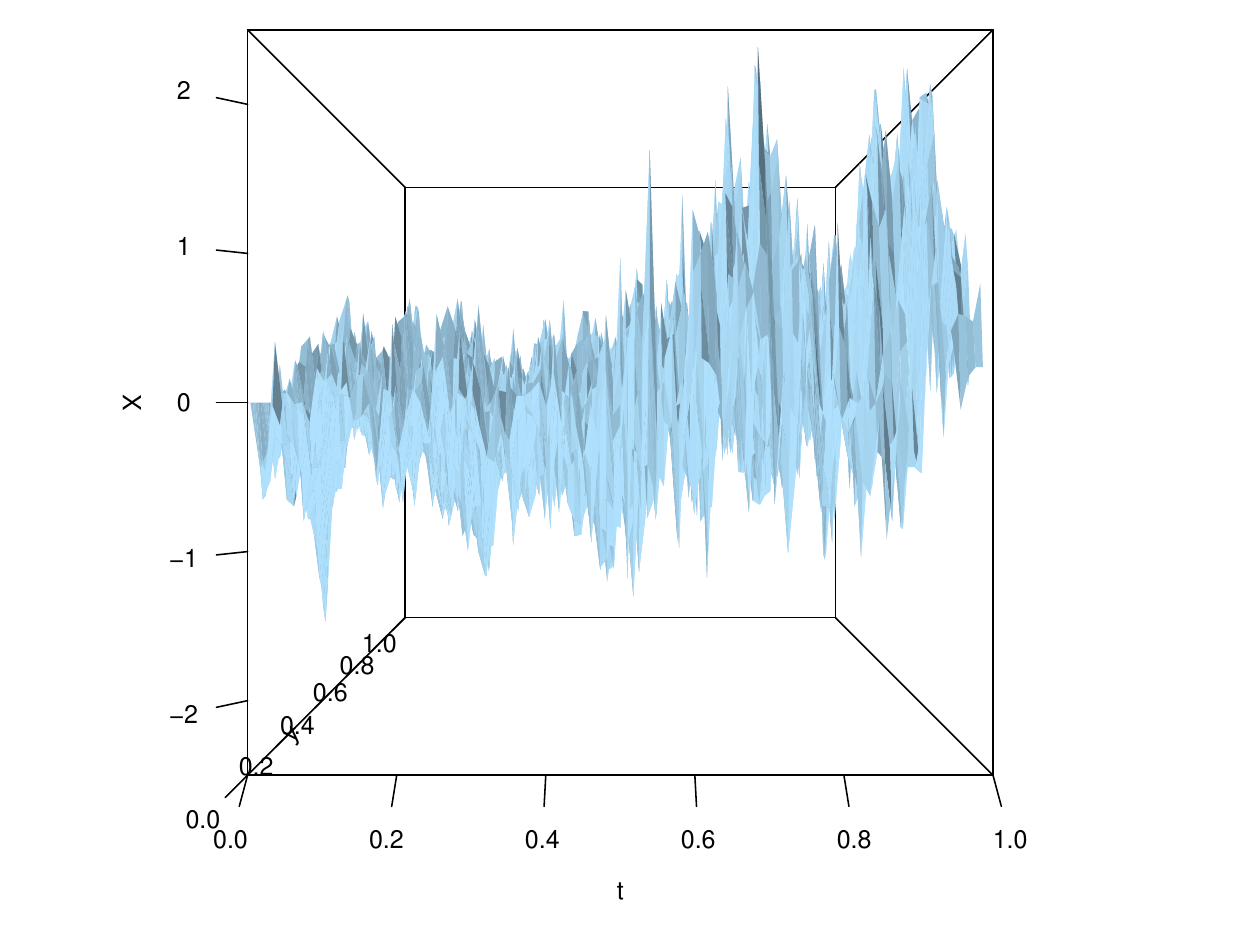}
  \subcaption{$\tau=0.5$, $\sigma_2^\dag = 1.8$.}
 \end{minipage}
 \caption{Sample paths of $X_t(y)$ given in SPDE \eqref{spde_ex1} 
 with $\sigma(t) = \ind_{[0,\tau)}(t) +\sigma_2^\dag \ind_{[\tau,1]}(t)$.
 }
 \label{fig_path}
\end{figure}
In order to verify the asymptotic behavior of the test statistic $T_n$, 
we perform numerical simulations with the following situations.
\begin{description}
\item[Situation 1:]
$\sigma(t) = \sigma^*$ over $t \in [0,1]$ with $\sigma^* = 1$.

\item[Situation 2:]
$\sigma(t) = \sigma_1^\dag \ind_{[0,\tau)}(t) +\sigma_2^\dag \ind_{[\tau,1]}(t)$
with $\tau = 0.5$, $\sigma_1^\dag = 1$, $\sigma_2^\dag \in \{ 1.4, 1.5, \ldots, 1.8 \}$.

\item[Situation 3:]
$\sigma(t) = \sigma_1^\dag \ind_{[0,\tau)}(t) +\sigma_2^\dag \ind_{[\tau,1]}(t)$
with $\tau \in \{0.1,0.2,\ldots,0.5\}$, $\sigma_1^\dag = 1$, $\sigma_2^\dag = 1.8$.
\end{description}

Figure \ref{fig_path} illustrates sample paths of $X_t(y)$
given in SPDE \eqref{spde_ex1}.
We consider Situations 1--3
and examine whether the volatility $\sigma(t)$ changes based on the discrete spatio-temporal data obtained from these sample paths.

We construct the test statistic $T_n$ 
using the method presented in Subsubsection \ref{sec3-1-2}, 
and examine the performance of $T_n$.
We compute the estimators $(\widehat \kappa, \widehat V)$ in $T_n$ 
based on the thinned data $\mathbf X_{m,N}^{(b)}$ with $m = 100$, 
$N = 10^4$ and $b = 0.0297$.
In all situations, the number of Monte Carlo simulations is $1000$.
All situations are conducted at significant level $0.05$ and 
the corresponding critical value is $1.3581$,
which is calculated from \eqref{Kol}.
Note that we have
$\frac{n^{3/2}}{m N} \approx 0.001, 0.003, 0.005, 0.008$
and $\frac{n^{3/2}}{M} \approx 0.100, 0.283, 0.520, 0.800$ 
for $n = 100, 200, 300, 400$, respectively.

Figure \ref{fig1} gives the histogram of $T_n$ with $n = 400$ in Situation 1
versus the probability density function (red line) 
of the theoretical asymptotic distribution
and the empirical distribution function versus 
the cumulative distribution function (red line).
The empirical results are 
broadly consistent with
the theoretical results.
\begin{figure}[h]
\begin{center}
\includegraphics[keepaspectratio, width=6.5cm]{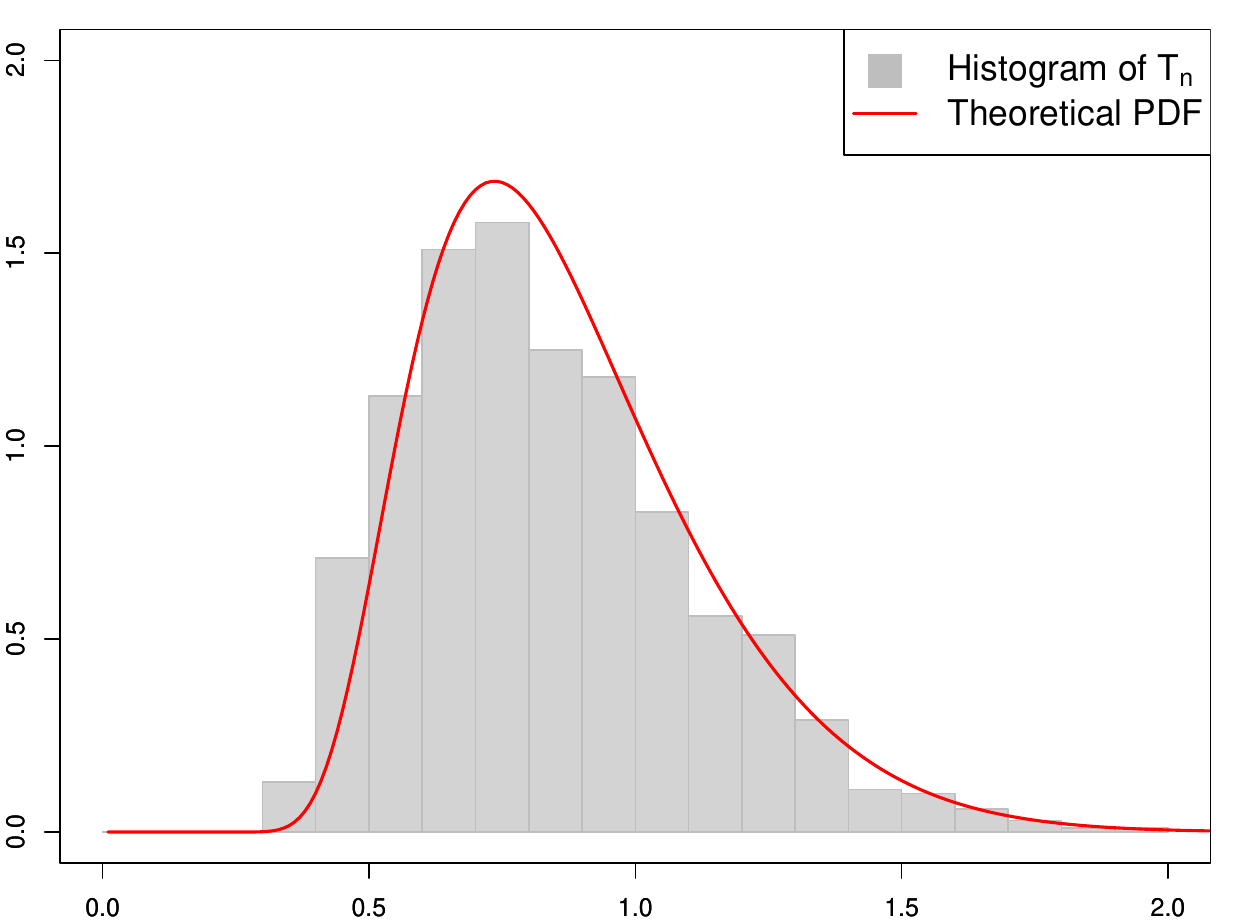}
\includegraphics[keepaspectratio, width=6.5cm]{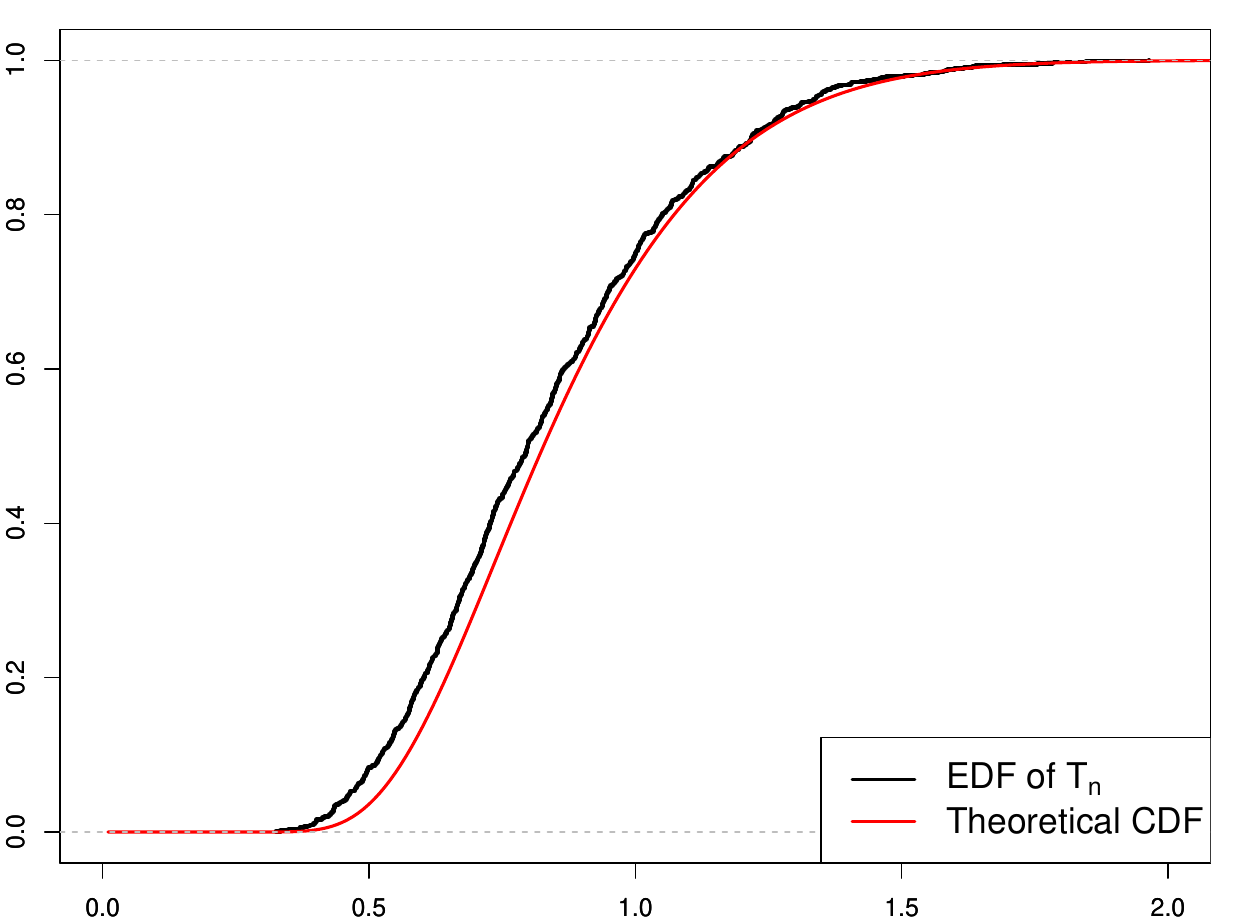}
\end{center}
\caption{Histogram and empirical distribution function of $T_n$ 
with $n = 400$ in Situation 1.}
\label{fig1}
\end{figure}

Tables \ref{tab2} and \ref{tab3} show the empirical powers of 
$T_n$ with $n \in \{100,200,300,400\}$ in Situations 2 and 3, respectively. 
These values represent the percentages obtained from $1000$ Monte Carlo iterations
which exceed the critical value $1.3581$.
In all situations, the empirical powers of $T_n$ approach $1$ as $n$ increases.
\renewcommand{\arraystretch}{1.2}
\begin{table}[h]
\caption{Empirical powers of $T_n$ in Situation 2.}
\label{tab2}
\begin{center}
\begin{tabular}{c|ccccc} \hline
 & & $\sigma_1^\dag = 1$ & $\underset{\tau = 0.5}{\longrightarrow}$ & $\sigma_2^\dag$ &
\\
		& $\sigma_2^\dag = 1.4$ & $\sigma_2^\dag = 1.5$ & $\sigma_2^\dag = 1.6$ & 
$\sigma_2^\dag = 1.7$ & $\sigma_2^\dag = 1.8$
\\ \hline
$n = 100$ & 0.466 & 0.593 & 0.941 & 0.963 & 0.983
 \\
$n = 200$ & 0.769 & 0.918 & 0.999 & 1.000 & 1.000
 \\
$n = 300$ & 0.929 & 0.978 & 1.000 & 1.000 & 1.000
 \\
$n = 400$ & 0.977 & 0.999 & 1.000 & 1.000 & 1.000
 \\   \hline
\end{tabular}
\end{center}
\end{table}
\begin{table}[h]
\caption{Empirical powers of $T_n$ in Situation 3.}
\label{tab3}
\begin{center}
\begin{tabular}{c|ccccc} \hline
 & & $\sigma_1^\dag = 1$ & $\underset{\tau}{\longrightarrow}$ & $\sigma_2^\dag = 1.8$ &
\\
		& $\tau=0.1$ & $\tau=0.2$ & $\tau=0.3$ & $\tau=0.4$ & $\tau=0.5$ 
\\ \hline
$n = 100$ & 0.460 & 0.813 & 0.957 & 0.973 & 0.983
 \\
$n = 200$ & 0.635 & 0.989 & 1.000 & 0.999 & 1.000
 \\
$n = 300$ & 0.819 & 0.999 & 1.000 & 1.000 & 1.000
 \\
$n = 400$ & 0.937 & 1.000 & 1.000 & 1.000 & 1.000
 \\   \hline
\end{tabular}
\end{center}
\end{table}

\section{Proofs}\label{sec5}
We set the following notation.

\begin{enumerate}
\item
For families $\{ a_\lambda \}, \{ b_\lambda \} \subset \mathbb R$, 
we write $a_\lambda \lesssim b_\lambda$ 
if $|a_\lambda| \le C |b_\lambda|$ for some universal constant $C > 0$ 
and any $\lambda$, and we write $a_\lambda \sim b_\lambda$ 
if $a_\lambda \lesssim b_\lambda$ and $b_\lambda \lesssim a_\lambda$.

\item
For $x = (x_1, \ldots, x_d) \in \mathbb R^d$ and 
$f:\mathbb R^d \to \mathbb R$, 
we write $\pd_{x_i} f(x) = \frac{\pd}{\pd x_i}f(x)$
and $\pd_x f(x) = (\pd_{x_1}f(x), \ldots, \pd_{x_d}(x))$.

\item
Let $\GG_i = \GG_i^n = \sigma[\{ w_{\ell}(s) \}_{s \le t_i^n}]$.

\item
$X_n(\cdot) \wto X(\cdot)$ in $\mathbb D[0,1]$ denotes
$X_n(\cdot)$ weakly converges to $X(\cdot)$ in the Skorohod space on $[0,1]$.

\end{enumerate}

\subsection{Proof of Theorem \ref{th1}}
For $\ell \in \mathbb N^d$ obtained from [A1], 
we consider the Ornstein-Uhlenbeck dynamics 
\begin{equation*}
\left\{
\begin{split}
\dd x_{\ell}(t) &= -\lambda_{\ell} x_{\ell}(t) \dd t +\beta_{\ell}(t) \dd w_{\ell}(t),
\\
x_{\ell}(0) &= \langle X_0, e_{\ell} \rangle.
\end{split}
\right.
\end{equation*}
We define
\begin{equation*}
T_n^* = \frac{1}{(\beta_{\ell}^*)^2} \sqrt{\frac{n}{2}} 
\max_{1 \le k \le n}
\biggl| \sum_{i=1}^k (\Delta_i^n x_{\ell})^2 
-\frac{k}{n} \sum_{i=1}^n (\Delta_i^n x_{\ell})^2 \biggr|.
\end{equation*}
We then prove the following two propositions in order to show Theorem \ref{th1}.
\begin{prop}\label{prop1}
Under $H_0$, it holds that
\begin{equation*}
T_n^* \dto \sup_{t \in [0,1]}|B^\circ(t)|.
\end{equation*}
\end{prop}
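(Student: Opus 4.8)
The plan is to reduce the statement to a functional central limit theorem for the partial-sum process of the squared increments of the Ornstein--Uhlenbeck process $x_\ell$ under $H_0$, where $\beta_\ell(t) \equiv \beta_\ell^*$ is constant in time. Write $\zeta_i = (\Delta_i^n x_\ell)^2$ for $i \in \{1,\ldots,n\}$, and note that $T_n^*$ is, up to the normalizing constant $(\beta_\ell^*)^{-2}\sqrt{n/2}$, the CUSUM functional $\max_{1\le k\le n}\bigl|\sum_{i=1}^k \zeta_i - \tfrac{k}{n}\sum_{i=1}^n \zeta_i\bigr|$ applied to $(\zeta_i)$. The key point is that $\Delta_i^n x_\ell$ is, under $H_0$, a Gaussian random variable (plus an initial-value contribution controlled by [A1]) with $\EE[(\Delta_i^n x_\ell)^2] = (\beta_\ell^*)^2 \Delta_n (1 + \OO(\Delta_n))$ and variance of order $\Delta_n^2$, so that $\zeta_i$ is an approximately i.i.d.\ (more precisely, $\rho$-mixing with summable coefficients) triangular array whose mean is $(\beta_\ell^*)^2\Delta_n$ up to negligible corrections.

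First I would make the OU increment explicit: from \eqref{cor-pro}, $\Delta_i^n x_\ell = \ee^{-\lambda_\ell t_i^n}\langle X_0,e_\ell\rangle(1 - \ee^{\lambda_\ell \Delta_n}) \cdot(-1) + \beta_\ell^* \gamma_\ell^{-\alpha/2}$-type stochastic integral terms; the deterministic-in-$X_0$ part decays geometrically and, after summing $k$ of its squares and subtracting the $\tfrac kn$-scaled total, contributes $\op(1/\sqrt n)$ to the bracket by [A1]-(iii) and a Borel--Cantelli / moment argument, hence $\op(1)$ to $T_n^*$. For the dominant part, I would decompose $\Delta_i^n x_\ell = \xi_i + \eta_i$ where $\xi_i = \beta_\ell^* \gamma_\ell^{-\alpha/2}\int_{t_{i-1}^n}^{t_i^n}\dd w_\ell(s)$ are i.i.d.\ $N(0,(\beta_\ell^*)^2\gamma_\ell^{-\alpha}\Delta_n)$ and $\eta_i$ collects the remaining mean-reversion terms, which are of order $\lambda_\ell\Delta_n$ times earlier values of $x_\ell$; one checks $\sum_{i=1}^n \EE[\eta_i^2] = \OO(\Delta_n)$ and that cross terms $\sum \xi_i\eta_i$ are negligible in the CUSUM functional. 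Then, with $S_k = \sum_{i=1}^k \xi_i^2$, the process $\tfrac{1}{(\beta_\ell^*)^2\sqrt{2n}\,\Delta_n\gamma_\ell^{-\alpha}}\bigl(S_{\lfloor nt\rfloor} - t\,S_n\bigr)$; wait — I must be careful with the factor $\gamma_\ell^{-\alpha}$, which is absorbed since $\EE[\xi_i^2] = (\beta_\ell^*)^2\gamma_\ell^{-\alpha}\Delta_n$, while $\widehat\beta_\ell^2$ and $(\beta_\ell^*)^2$ in the statement already refer to $\beta_\ell(t) = \sigma(t)\gamma_\ell^{-\alpha/2}$, so $(\beta_\ell^*)^2 = (\sigma^*)^2\gamma_\ell^{-\alpha}$ and the normalization is internally consistent. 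Applying Donsker's theorem to the i.i.d.\ centered array $(\xi_i^2 - \EE[\xi_i^2])/(\sqrt{2}\,(\beta_\ell^*)^2\Delta_n)$ (whose variance is $1$ since $\VV(\chi^2_1)=2$), one gets $\tfrac{1}{(\beta_\ell^*)^2\sqrt{2n}\,\Delta_n}\sum_{i=1}^{\lfloor nt\rfloor}(\xi_i^2-\EE\xi_i^2) \wto W(t)$ in $\mathbb D[0,1]$, and the CUSUM centering turns $W(t) - tW(1)$ into a Brownian bridge $B^\circ(t)$. Finally $\max_{1\le k\le n}|\cdots| = \sup_{t\in[0,1]}|\text{(process)}(t)| + \op(1)$ by the continuous mapping theorem applied to the sup functional on $\mathbb D[0,1]$, giving $T_n^* \dto \sup_{t\in[0,1]}|B^\circ(t)|$.

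The main obstacle I expect is controlling the mean-reversion remainder $\eta_i$ uniformly over the partial sums $k \le n$: one needs that $\max_{1\le k\le n}\bigl|\sum_{i=1}^k(\eta_i^2 + 2\xi_i\eta_i) - \tfrac kn\sum_{i=1}^n(\cdots)\bigr| = \op(\sqrt n \Delta_n)$, which requires a maximal inequality (e.g.\ Doob or Ottaviani type) for the martingale part $\sum \xi_i\eta_i$ together with stationarity-type $L^2$ bounds on $x_\ell(t_i^n)$ — these hold because $\lambda_\ell > 0$ makes $x_\ell$ an exponentially ergodic OU process with bounded second moments under [A1]. A secondary technical point is that $\Delta_n = \tfrac1N\lfloor N/n\rfloor$ is not exactly $1/n$; since $\Delta_n = \tfrac1n(1 + \OO(n/N))$ and [A3] forces $n/N \to 0$, all the $\Delta_n$-asymptotics above go through with the same leading constants. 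This establishes Proposition \ref{prop1}; the passage to the feasible statistic $T_n$ (replacing $x_\ell$ by $\widehat x_\ell$ and $(\beta_\ell^*)^2$ by $\widehat\beta_\ell^2$) will then follow from Proposition \ref{prop0} and [B] in the subsequent proposition and the proof of Theorem \ref{th1}.
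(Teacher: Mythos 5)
Your proposal is correct and follows essentially the same route as the paper: split $\Delta_i^n x_{\ell}$ into an independent Gaussian main term plus a mean-reversion remainder, prove a functional CLT for the CUSUM of the squared main terms (giving the Brownian bridge limit), and show the cross and square remainder terms vanish uniformly in $k$ via a maximal inequality — the paper takes $\zeta_i=\int_{t_{i-1}^n}^{t_i^n}\beta_{\ell}(s)\ee^{-\lambda_{\ell}(t_i^n-s)}\dd w_{\ell}(s)$ as main term and uses McLeish's martingale FCLT together with a Lenglart-type lemma, while you take the undamped increment $\beta_{\ell}^*\,\Delta_i^n w_{\ell}$ and invoke Donsker plus Doob/Ottaviani, which is the same argument up to where the exponential damping is placed. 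The only point to watch is that with your decomposition the cross term $\sum_i \xi_i\eta_i$ is not exactly a martingale (your $\eta_i$ contains the within-interval piece $\beta_{\ell}^*\int_{t_{i-1}^n}^{t_i^n}(\ee^{-\lambda_{\ell}(t_i^n-s)}-1)\dd w_{\ell}(s)$, so $\EE[\xi_i\eta_i\mid\GG_{i-1}]=\OO(\Delta_n^2)\neq 0$), but these conditional means sum to $\OO(\Delta_n)$ and are killed by the $\sqrt{n}$ scaling, so your maximal-inequality step goes through after centering, exactly as in the paper's Lemma \ref{lem3}.
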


\begin{prop}[]\label{prop2}
Assume that [A1]--[A3] and [B] hold. Under $H_0$, it holds that
\begin{equation*}
T_n -T_n^* \pto 0.
\end{equation*}
\end{prop}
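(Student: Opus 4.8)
\emph{Proof plan.} For a process $\{Z_t\}_{t\in[0,1]}$ write $S_k(Z)=\sum_{i=1}^k(\Delta_i^n Z)^2$ and $Q_{k,n}(Z)=S_k(Z)-\tfrac kn S_n(Z)$, so that $T_n=\widehat\beta_\ell^{-2}\sqrt{n/2}\,\max_{1\le k\le n}|Q_{k,n}(\widehat x_\ell)|$ and $T_n^*=(\beta_\ell^*)^{-2}\sqrt{n/2}\,\max_{1\le k\le n}|Q_{k,n}(x_\ell)|$. By the triangle inequality,
\begin{equation*}
|T_n-T_n^*|\le\Bigl|\frac1{\widehat\beta_\ell^2}-\frac1{(\beta_\ell^*)^2}\Bigr|\,\sqrt{\tfrac n2}\,\max_{1\le k\le n}|Q_{k,n}(\widehat x_\ell)|+\frac1{(\beta_\ell^*)^2}\,\sqrt{\tfrac n2}\,\Bigl|\max_{1\le k\le n}|Q_{k,n}(\widehat x_\ell)|-\max_{1\le k\le n}|Q_{k,n}(x_\ell)|\Bigr|.
\end{equation*}
Since [B] gives $\widehat\beta_\ell^2\pto(\beta_\ell^*)^2\in(0,\infty)$, we have $|\widehat\beta_\ell^{-2}-(\beta_\ell^*)^{-2}|\pto 0$, so the plan reduces to showing $\sqrt{n/2}\,\bigl|\max_k|Q_{k,n}(\widehat x_\ell)|-\max_k|Q_{k,n}(x_\ell)|\bigr|=\op(1)$ together with $\sqrt{n/2}\,\max_k|Q_{k,n}(\widehat x_\ell)|=\Op(1)$; then the right-hand side above is an $\op(1)$ times an $\Op(1)$ plus an $\op(1)$, hence $\op(1)$.

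For the first claim, since $0\le k/n\le 1$ we have $|Q_{k,n}(\widehat x_\ell)-Q_{k,n}(x_\ell)|\le 2\sum_{i=1}^n|(\Delta_i^n\widehat x_\ell)^2-(\Delta_i^n x_\ell)^2|$ for all $k$, and using $\bigl|\max_k|a_k|-\max_k|b_k|\bigr|\le\max_k|a_k-b_k|$,
\begin{equation*}
\Bigl|\max_{1\le k\le n}|Q_{k,n}(\widehat x_\ell)|-\max_{1\le k\le n}|Q_{k,n}(x_\ell)|\Bigr|\le 2\sum_{i=1}^n\bigl|(\Delta_i^n\widehat x_\ell)^2-(\Delta_i^n x_\ell)^2\bigr|.
\end{equation*}
Writing $e_i=\Delta_i^n(\widehat x_\ell-x_\ell)$, from $(\Delta_i^n\widehat x_\ell)^2-(\Delta_i^n x_\ell)^2=e_i(e_i+2\Delta_i^n x_\ell)$ and Cauchy--Schwarz,
\begin{equation*}
\sum_{i=1}^n\bigl|(\Delta_i^n\widehat x_\ell)^2-(\Delta_i^n x_\ell)^2\bigr|\le\sum_{i=1}^n e_i^2+2\Bigl(\sum_{i=1}^n e_i^2\Bigr)^{1/2}\Bigl(\sum_{i=1}^n(\Delta_i^n x_\ell)^2\Bigr)^{1/2}.
\end{equation*}
Under $H_0$, $\sum_{i=1}^n(\Delta_i^n x_\ell)^2\pto(\beta_\ell^*)^2$ by Remark \ref{rmk1}-(iv), so that factor is $\Op(1)$; multiplying by $\sqrt{n/2}$ and noting $\sqrt n\sum_i e_i^2=(n\sum_i e_i^2)^{1/2}(\sum_i e_i^2)^{1/2}$ and $\sqrt n(\sum_i e_i^2)^{1/2}=(n\sum_i e_i^2)^{1/2}$, everything is $\op(1)$ provided $n\sum_{i=1}^n(\Delta_i^n(\widehat x_\ell-x_\ell))^2\pto 0$. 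This last convergence --- that the $n$-rescaled quadratic variation of the gap between the true coordinate process $x_\ell$ and its reconstruction $\widehat x_\ell$ built from the discrete data and the estimator $\widehat\kappa$ tends to $0$ --- is precisely the content of Proposition \ref{prop0} under [A1]--[A3], and it is the one genuine obstacle in the argument; the two balance conditions in [A3] are exactly what make it hold, one controlling the time discretization and the estimation error in $\kappa$, the other the spatial discretization.

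For the second claim, $\sqrt{n/2}\,\max_k|Q_{k,n}(\widehat x_\ell)|\le\sqrt{n/2}\,\max_k|Q_{k,n}(x_\ell)|+\sqrt{n/2}\,\bigl|\max_k|Q_{k,n}(\widehat x_\ell)|-\max_k|Q_{k,n}(x_\ell)|\bigr|$; the first term equals $(\beta_\ell^*)^2 T_n^*$, which is $\Op(1)$ since $T_n^*\dto\sup_{t\in[0,1]}|B^\circ(t)|$ by Proposition \ref{prop1}, and the second is $\op(1)$ by the previous step, so $\sqrt{n/2}\,\max_k|Q_{k,n}(\widehat x_\ell)|=\Op(1)$. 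Substituting both bounds into the opening display gives $T_n-T_n^*\pto 0$ under $H_0$.
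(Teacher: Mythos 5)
Your outer reduction (splitting off the normalizers via [B] and Proposition \ref{prop1}, and using $\bigl|\max_k|a_k|-\max_k|b_k|\bigr|\le\max_k|a_k-b_k|$ together with $(\Delta_i^n\widehat x_\ell)^2-(\Delta_i^n x_\ell)^2=e_i(e_i+2\Delta_i^n x_\ell)$) matches the paper's. The gap is the single step you lean on at the end: the claim that $n\sum_{i=1}^n(\Delta_i^n\widehat x_\ell-\Delta_i^n x_\ell)^2\pto0$ under [A1]--[A3], which you attribute to Proposition \ref{prop0}. That proposition states something different and weaker, namely $\sqrt n\,\bigl(\sum_i(\Delta_i^n\widehat x_\ell)^2-\sum_i(\Delta_i^n x_\ell)^2\bigr)=\op(1)$, and in the paper it is obtained \emph{from} the proof of Proposition \ref{prop2}, so invoking it here is both inaccurate and circular. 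More substantively, the convergence you need is not available under [A3]. Decomposing $\Delta_i^n\widehat x_\ell-\Delta_i^n x_\ell=S_{1,i}+S_{2,i}$ as in the paper ($S_{1,i}$ carrying $\widehat\kappa-\kappa^*$, $S_{2,i}$ the spatial discretization error), the first condition of [A3] does give $n\sum_iS_{1,i}^2\pto0$, but the available bound for the second piece is $\EE[S_{2,i}^2]=\OO\bigl(M_{(1)}^{-((2(1+\alpha)-d)\tand 2)}\bigr)$ uniformly in $i$, so $n\sum_iS_{2,i}^2$ is of order $n^2/M_{(1)}^{(2(1+\alpha)-d)\tand 2}$, while [A3] only assumes $n^{3/2}/M_{(1)}^{(2(1+\alpha)-d)\tand 2}\to0$ (e.g.\ $d=1$, $\alpha=0$, $M\sim n^{7/4}$ satisfies [A3] but makes your bound blow up like $n^{1/4}$). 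Since the summands are nonnegative there is no cancellation to rescue this, so your Cauchy--Schwarz treatment of the cross term demands a strictly stronger balance condition than the proposition assumes.

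The paper avoids exactly this by never applying the Schwarz inequality to the $S_{2,i}$ cross term. It needs only $\sqrt n\sum_iS_{2,i}^2\pto0$ (which [A3] does deliver) for the quadratic part, uses Schwarz only for the $S_{1,i}$ cross term, and handles $\sqrt n\max_k\bigl|\sum_{i\le k}S_{2,i}\Delta_i^n x_\ell\bigr|$ through the Lenglart-type Lemma \ref{lem3}, verifying $\sqrt n\sum_i\bigl|\EE[S_{2,i}\Delta_i^n x_\ell\,|\,\GG_{i-1}]\bigr|\pto0$ and $n\sum_i\EE[(S_{2,i}\Delta_i^n x_\ell)^2\,|\,\GG_{i-1}]\pto0$ via the moment estimates of Lemmas \ref{lem4}--\ref{lem6}; exploiting this conditional (near-martingale) structure is what lets the weaker rate in [A3] suffice. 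To complete your argument you must either add this martingale treatment of $\sum_{i\le k}S_{2,i}\Delta_i^n x_\ell$ or strengthen [A3] to $n^2/M_{(1)}^{(2(1+\alpha)-d)\tand 2}\to0$, which changes the statement being proved.
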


\begin{proof}[\bf{Proof of Proposition \ref{prop1}}]
For the coordinate process \eqref{cor-pro}, we set 
\begin{equation*}
\zeta_{i,l} = \zeta_{i,l,n} = x_l(t_i^n) -\ee^{-\lambda_l \Delta_n} x_l(t_{i-1}^n)
= \int_{t_{i-1}^n}^{t_i^n} \beta_l(s) \ee^{-\lambda_l (t_i^n -s)} \dd w_l(s),
\quad
\zeta_i = \zeta_{i,\ell},
\end{equation*}
\begin{equation*}
\eta_{i,l} = \eta_{i,l,n} = (\ee^{-\lambda_l \Delta_n} -1) x_l(t_{i-1}^n),
\quad
\eta_i = \eta_{i,\ell}.
\end{equation*}
Since we have $\Delta_i^n x_l = \zeta_{i,l} +\eta_{i,l}$, 
we obtain the following decomposition.
\begin{align*}
\sum_{i=1}^k (\Delta_i^n x_{\ell})^2 
-\frac{k}{n} \sum_{i=1}^n (\Delta_i^n x_{\ell})^2
&= \sum_{i=1}^k \zeta_i^2 
-\frac{k}{n} \sum_{i=1}^n \zeta_i^2
\\
&\quad + \sum_{i=1}^k G_{1,i}^{\Delta} 
-\frac{k}{n} \sum_{i=1}^n G_{1,i}^{\Delta}
\\
&\quad + \sum_{i=1}^k G_{2,i}^{\Delta}
-\frac{k}{n} \sum_{i=1}^n G_{2,i}^{\Delta},
\end{align*}
where $G_{1,i}^{\Delta} = 2 \zeta_i \eta_i$, $G_{2,i}^{\Delta} = \eta_i^2$.
Since it follows that
\begin{align*}
\Biggl| T_n^* - 
\frac{1}{(\beta_{\ell}^*)^2} \sqrt{\frac{n}{2}} 
\max_{1 \le k \le n}
\biggl| \sum_{i=1}^k \zeta_i^2 
-\frac{k}{n} \sum_{i=1}^n \zeta_i^2 \biggr| 
\Biggr|
\lesssim
\sum_{j=1}^2 \sqrt{n} 
\max_{1 \le k \le n}
\biggl| \sum_{i=1}^k G_{j,i}^{\Delta} 
-\frac{k}{n} \sum_{i=1}^n G_{j,i}^{\Delta} \biggr|,
\end{align*}
we show the following two assertions
in order to show Proposition \ref{prop1}.

\begin{enumerate}
\item[(i)]
Under $H_0$, it holds that
\begin{equation}\label{lem1}
\frac{1}{(\beta_{\ell}^*)^2} \sqrt{\frac{n}{2}} 
\max_{1 \le k \le n}
\biggl| \sum_{i=1}^k \zeta_i^2 
-\frac{k}{n} \sum_{i=1}^n \zeta_i^2 \biggr|
\dto 
\sup_{t \in [0,1]}|B^\circ(t)|.
\end{equation}

\item[(ii)]
For $j = 1, 2$, it holds that under $H_0$,
\begin{equation}\label{lem2}
\sqrt{n} 
\max_{1 \le k \le n}
\biggl| \sum_{i=1}^k G_{j,i}^{\Delta} 
-\frac{k}{n} \sum_{i=1}^n G_{j,i}^{\Delta} \biggr|
\pto 0.
\end{equation}

\end{enumerate}

\textit{Proof of \eqref{lem1}.}
Since $\EE[\zeta_i^2 | \GG_{i-1}]$ is independent of $i$ under $H_0$, that is,
\begin{equation*}
\EE[\zeta_i^2 | \GG_{i-1}] = \EE[\zeta_i^2]
= (\beta_{\ell}^*)^2 
\frac{1- \ee^{-2\lambda_{\ell} \Delta_n}}{2 \lambda_{\ell}},
\end{equation*}
we have
\begin{align*}
&\frac{1}{(\beta_{\ell}^*)^2} \sqrt{\frac{n}{2}} \max_{1 \le k \le n}
\biggl| \sum_{i=1}^k \zeta_i^2 
-\frac{k}{n} \sum_{i=1}^n \zeta_i^2 \biggr|
\\
&= \frac{1}{(\beta_{\ell}^*)^2} \sqrt{\frac{n}{2}} \max_{1 \le k \le n}
\biggl| \sum_{i=1}^k (\zeta_i^2 -\EE[\zeta_i^2 | \GG_{i-1}])
-\frac{k}{n} \sum_{i=1}^n (\zeta_i^2 -\EE[\zeta_i^2 | \GG_{i-1}]) \biggr|
\\
&= \sup_{0 \le t \le 1}
\biggl| 
\frac{1}{(\beta_{\ell}^*)^2} \sqrt{\frac{n}{2}} 
\sum_{i=1}^{\lfloor n t \rfloor} (\zeta_i^2 -\EE[\zeta_i^2 | \GG_{i-1}])
-\frac{\lfloor n t \rfloor}{n} 
\frac{1}{(\beta_{\ell}^*)^2} \sqrt{\frac{n}{2}} 
\sum_{i=1}^n (\zeta_i^2 -\EE[\zeta_i^2 | \GG_{i-1}]) \biggr|
\end{align*}
under $H_0$. Let $\{ B(t) \}_{t \ge 0}$ be a one-dimensional Brownian motion.
We then show 
\begin{equation*}
\frac{1}{(\beta_{\ell}^*)^2} \sqrt{\frac{n}{2}} 
\sum_{i=1}^{\lfloor n t \rfloor} (\zeta_i^2 -\EE[\zeta_i^2 | \GG_{i-1}])
\wto B(t) \quad \text{in} \ \mathbb D[0,1]
\end{equation*}
under $H_0$, which can be obtained by showing that under $H_0$, 
\begin{equation*}
\frac{n}{2(\beta_{\ell}^*)^4}
\sum_{i=1}^{\lfloor n t \rfloor} 
\EE \bigl[ (\zeta_i^2 -\EE[\zeta_i^2 | \GG_{i-1}])^2 \big| \GG_{i-1} \bigr] \pto t,
\end{equation*}
\begin{equation*}
n^2 \sum_{i=1}^{\lfloor n t \rfloor} 
\EE \bigl[ (\zeta_i^2 -\EE[\zeta_i^2 | \GG_{i-1}])^4 \big| \GG_{i-1} \bigr] \pto 0
\end{equation*}
for $t \in [0,1]$ (see Corollary 3.8 in \cite{McLeish1974}).

Since
\begin{equation*}
\EE \bigl[ (\zeta_i^2 -\EE[\zeta_i^2 | \GG_{i-1}])^2 \big| \GG_{i-1} \bigr]
= \EE[\zeta_i^4| \GG_{i-1}] -\EE[\zeta_i^2|\GG_{i-1}]^2
\end{equation*}
and
\begin{equation*}
\EE[\zeta_i^4| \GG_{i-1}] 
= \EE[\zeta_i^4]
= 3 (\beta_{\ell}^*)^4 
\biggl( \frac{1- \ee^{-2\lambda_{\ell} \Delta_n}}
{2 \lambda_{\ell}} \biggr)^2,
\end{equation*}
we obtain
\begin{align*}
n \sum_{i=1}^{\lfloor n t \rfloor} 
\EE \bigl[ (\zeta_i^2 -\EE[\zeta_i^2 | \GG_{i-1}])^2 \big| \GG_{i-1} \bigr]
&= n \sum_{i=1}^{\lfloor n t \rfloor} 
2 (\beta_{\ell}^*)^4 
\biggl( 
\frac{1- \ee^{-2\lambda_{\ell} \Delta_n}}{2 \lambda_{\ell}} \biggr)^2
\\
&= 2(\beta_{\ell}^*)^4 \cdot \Delta_n \lfloor n t \rfloor \cdot n \Delta_n \cdot
\biggl( 
\frac{1- \ee^{-2\lambda_{\ell} \Delta_n}}{2 \lambda_{\ell} \Delta_n} 
\biggr)^2
\\
&\to 2 (\beta_{\ell}^*)^4 t.
\end{align*}
We also find from 
\begin{equation*}
\EE \bigl[ (\zeta_i^2 -\EE[\zeta_i^2 | \GG_{i-1}])^4 \big| \GG_{i-1} \bigr]
\lesssim \EE [\zeta_i^8 | \GG_{i-1} ]
= \EE[\zeta_i^8] = \OO(\Delta_n^4)
\end{equation*} 
that
\begin{equation*}
n^2 \sum_{i=1}^{\lfloor n t \rfloor} 
\EE \bigl[ (\zeta_i^2 -\EE[\zeta_i^2 | \GG_{i-1}])^4 \big| \GG_{i-1} \bigr] 
=\OO(n^3 \Delta_n^4) = \oo(1).
\end{equation*}
This concludes the proof of \eqref{lem1}.

\textit{Proof of \eqref{lem2}.}
First, we consider $j = 1$. 
Note that $\sum_{i=1}^{k_n} x_{\ell}(t_{i-1}^n)^2 = \Op(k_n)$
for a positive integer sequence $\{ k_n \}$.
Since
\begin{align*}
\frac{1}{n} \EE[\Delta_n^{-3/2} \zeta_i \eta_i | \GG_{i-1}]
&= \frac{1}{n \Delta_n^{3/2}} \eta_i\EE[\zeta_i]
= 0,
\\
\frac{1}{n^2} \sum_{i=1}^n 
\EE[ \Delta_n^{-3} \zeta_i^2 \eta_i^2 | \GG_{i-1}]
&= \frac{1}{n^2 \Delta_n^3} \sum_{i=1}^n \eta_i^2 \EE[\zeta_i^2]
\\
&= \frac{1}{n} \cdot 
\biggl( \frac{\ee^{-\lambda_{\ell} \Delta_n} -1}{\Delta_n} \biggr)^2
\frac{1-\ee^{-2\lambda_{\ell} \Delta_n}}{2\lambda_{\ell} \Delta_n} 
\cdot \frac{1}{n} \sum_{i=1}^n x_{\ell}(t_{i-1}^n)^2 
\\
& \pto 0,
\end{align*}
we see from Lemma \ref{lem3} below that
\begin{equation*}
\frac{1}{n} \max_{1 \le k \le n}
\biggl| \sum_{i=1}^k \Delta_n^{-3/2} \zeta_i \eta_i \biggr|
= \frac{1}{n \Delta_n^{3/2}} \max_{1 \le k \le n}
\biggl| \sum_{i=1}^k \zeta_i \eta_i \biggr|
=\op(1),
\end{equation*}
which yields 
\begin{equation*}
\sqrt{n} \max_{1 \le k \le n}
\biggl| \sum_{i=1}^k G_{1,i}^{\Delta}
-\frac{k}{n} \sum_{i=1}^n G_{1,i}^{\Delta} \biggr|
= \sqrt{n} \max_{1 \le k \le n}
\biggl| \sum_{i=1}^k \zeta_i \eta_i -\frac{k}{n} \sum_{i=1}^n \zeta_i \eta_i \biggr|
=\op(1).
\end{equation*}

We next consider $j = 2$. 
Since $1 -\ee^{-x} \le x$ for $x \in [0,\infty)$, we obtain
\begin{align*}
&\sqrt{n} \max_{1 \le k \le n}
\biggl| \sum_{i=1}^k G_{2,i}^{\Delta}
-\frac{k}{n} \sum_{i=1}^n G_{2,i}^{\Delta} \biggr|
\\
&= \sqrt{n} (1 -\ee^{-\lambda_{\ell} \Delta_n})^2 \max_{1 \le k \le n}
\biggl| \sum_{i=1}^k x_{\ell}(t_{i-1}^n)^2
-\frac{k}{n} \sum_{i=1}^n x_{\ell}(t_{i-1}^n)^2 \biggr|
\\
&\le \sqrt{n} (\lambda_{\ell}\Delta_n)^2 
\times 2 \sum_{i=1}^n x_{\ell}(t_{i-1}^n)^2 
\\
&= \Op(n^{3/2} \Delta_n^2) = \op(1).
\end{align*}
This completes the proof of \eqref{lem2}.
\end{proof}

\begin{proof}[\bf{Proof of Proposition \ref{prop2}}]
Let
\begin{align*}
U_n^* = 
\sqrt{\frac{n}{2}} 
\max_{1 \le k \le n}
\biggl| \sum_{i=1}^k (\Delta_i^n x_{\ell})^2 
-\frac{k}{n} \sum_{i=1}^n (\Delta_i^n x_{\ell})^2 \biggr|,
\\
U_n = 
\sqrt{\frac{n}{2}} 
\max_{1 \le k \le n}
\biggl| \sum_{i=1}^k (\Delta_i^n \widehat x_{\ell})^2 
-\frac{k}{n} \sum_{i=1}^n (\Delta_i^n \widehat x_{\ell})^2 \biggr|.
\end{align*}
Since it follows from [B] that
\begin{equation*}
\frac{1}{\widehat \beta_{\ell}^2} -\frac{1}{(\beta_{\ell}^*)^2} = \op(1),
\end{equation*}
and from Proposition \ref{prop1} that $U_n^* = \Op(1)$, we have
\begin{equation*}
|T_n -T_n^*| 
= \biggl| \frac{U_n}{\widehat \beta_{\ell}^2} 
-\frac{U_n^*}{(\beta_{\ell}^*)^2} \biggr|
\le \biggl| \frac{U_n -U_n^*}{\widehat \beta_{\ell}^2} \biggr|
+\biggl| \frac{1}{\widehat \beta_{\ell}^2} 
-\frac{1}{(\beta_{\ell}^*)^2} \biggr| U_n^* 
= \biggl| \frac{U_n -U_n^*}{\widehat \beta_{\ell}^2} \biggr| +\op(1).
\end{equation*}
Hence, we show $U_n -U_n^* = \op(1)$ in order to prove Proposition \ref{prop2}. 
We obtain 
\begin{align*}
|U_n -U_n^*| 
&\le \sqrt{n} \max_{1 \le k \le n}
\biggl| \sum_{i=1}^k 
((\Delta_i^n \widehat x_{\ell})^2 -(\Delta_i^n x_{\ell})^2) 
-\frac{k}{n} \sum_{i=1}^n
((\Delta_i^n \widehat x_{\ell})^2 -(\Delta_i^n x_{\ell})^2)
\biggr|
\\
&\lesssim \sqrt{n} \max_{1 \le k \le n}
\biggl| \sum_{i=1}^k ((\Delta_i^n \widehat x_{\ell})^2 
-(\Delta_i^n x_{\ell})^2) \biggr|
\\
&=
\sqrt{n} \max_{1 \le k \le n}
\biggl| \sum_{i=1}^k (\Delta_i^n \widehat x_{\ell} -\Delta_i^n x_{\ell}) 
(\Delta_i \widehat x_{\ell} -\Delta_i^n x_{\ell} 
+2 \Delta_i^n x_{\ell}) \biggr|
\\
&\lesssim
\sqrt{n} \sum_{i=1}^n 
(\Delta_i^n \widehat x_{\ell} -\Delta_i^n x_{\ell})^2
+\sqrt{n} \max_{1 \le k \le n}
\biggl| \sum_{i=1}^k (\Delta_i^n \widehat x_{\ell} -\Delta_i^n x_{\ell}) 
\Delta_i^n x_{\ell} \biggr|.
\end{align*}
Let $h_l(y;\kappa) = e_l(y;\kappa) \exp(\kappa y)$ and $h_l(y) = h_l(y;\kappa^*)$. 
Since we have
\begin{align*}
\Delta_i^n \widehat x_{\ell}
&= \sum_{j \in \mathbb M_d} 
\int_{D_j} \Delta_i^n X(y_{j})h_{\ell}(y;\widehat \kappa) \dd y,
\\
\Delta_i^n x_{\ell}
&= \sum_{j \in \mathbb M_d} \int_{D_j} \Delta_i^n X(y) 
h_{\ell}(y) \dd y
\end{align*}
and 
\begin{align*}
\Delta_i^n \widehat x_{\ell} -\Delta_i^n x_{\ell}
&= \sum_{j \in \mathbb M_d} \Delta_i^n X(y_{j}) 
\int_{D_j} (h_{\ell}(y;\widehat \kappa)-h_{\ell}(y;\kappa^*)) \dd y
\\
&\quad+ \sum_{j \in \mathbb M_d} 
\int_{D_j} (\Delta_i^n X(y_{j}) -\Delta_i^n X(y)) 
h_{\ell}(y) \dd y
\\
&=: S_{1,i} +S_{2,i},
\end{align*}
we estimate 
\begin{align*}
\sqrt{n} \sum_{i=1}^n 
(\Delta_i^n \widehat x_{\ell} -\Delta_i^n x_{\ell})^2
\lesssim 
\sqrt{n}  \sum_{i=1}^n (S_{1,i}^2 +S_{2,i}^2).
\end{align*}
We also see from the Schwarz inequality that
\begin{align*}
&\sqrt{n} \max_{1 \le k \le n} \biggl|
\sum_{i=1}^k (\Delta_i^n \widehat x_{\ell} 
-\Delta_i^n x_{\ell}) \Delta_i^n x_{\ell}
\biggr|
\\
&\le
\sqrt{n} \max_{1 \le k \le n} 
\biggl| \sum_{i=1}^k S_{1,i} \Delta_i^n x_{\ell} \biggr|
+ \sqrt{n} \max_{1 \le k \le n} 
\biggl| \sum_{i=1}^k S_{2,i} \Delta_i^n x_{\ell} \biggr|
\\ 
&\le \sqrt{n \sum_{i=1}^n S_{1,i}^2 \sum_{i=1}^n (\Delta_i^n x_{\ell})^2}
+ \sqrt{n} \max_{1 \le k \le n} 
\biggl| \sum_{i=1}^k S_{2,i} \Delta_i^n x_{\ell} \biggr|.
\end{align*}
Note that 
$\displaystyle \sum_{i=1}^n (\Delta_i^n x_{\ell})^2= \Op(1)$.
Therefore, it suffices to show
\begin{equation}\label{prop2-pf-1}
n \sum_{i=1}^n S_{1,i}^2 \pto 0,
\end{equation}
\begin{equation}\label{prop2-pf-2}
\sqrt{n} \sum_{i=1}^n S_{2,i}^2 \pto 0,
\end{equation}
\begin{equation}\label{prop2-pf-3}
\sqrt{n} \sum_{i=1}^n 
\bigl| \EE[S_{2,i}\Delta_i^n x_{\ell} | \GG_{i-1} ] \bigr| \pto 0,
\end{equation}
\begin{equation}\label{prop2-pf-4}
n \sum_{i=1}^n \EE[(S_{2,i} \Delta_i^n x_{\ell})^2 | \GG_{i-1} ] \pto 0
\end{equation}
in order to obtain the desired result by Lemma \ref{lem3} below.

\textbf{Step 1:}
We show \eqref{prop2-pf-1}. 
By the Taylor expansion, we have
\begin{align*}
S_{1,i}^2 
&\le \sum_{j \in \mathbb M_d} (\Delta_i^n X(y_{j}))^2 
\int_{D_j} 
(h_{\ell}(y;\widehat \kappa)-h_{\ell}(y;\kappa^*))^2 \dd y
\\
&\le \sum_{j \in \mathbb M_d} (\Delta_i^n X(y_{j}))^2 
\int_{D_j} 
\biggl| 
\int_0^1 \pd_{\kappa} h_{\ell}(y;\kappa^* +u(\widehat \kappa -\kappa^*)) \dd u
\biggr|^2 |\widehat \kappa -\kappa^*|^2 \dd y.
\end{align*}
For some $\epsilon_1 > 0$, we obtain
\begin{equation*}
\sqrt{n} \sum_{i=1}^n S_{1,i}^2 \le K_{n} |R(\widehat \kappa -\kappa^*)|^2,
\quad
K_n \lesssim \frac{\sqrt{n}}{R^2} 
\sum_{i=1}^n \sum_{j \in \mathbb M_d} (\Delta_i^n X(y_{j}))^2
\int_{D_j} \dd y
\end{equation*}
on $\Omega_1 = \{ |\widehat \kappa -\kappa^*| < \epsilon_1 \}$.
Therefore, we have by Lemmas \ref{lem5} and \ref{lem6} below, 
\begin{align*}
\EE[(\Delta_i^n X(y))^2]
&= \sum_{l_1,l_2 \in \mathbb N^d} 
\EE[ \Delta_i^n x_{l_1} \Delta_i^n x_{l_2} ] e_{l_1}(y) e_{l_2}(y)
\\
&\le \sum_{l \in \mathbb N^d} \EE[ (\Delta_i^n x_l)^2 ] e_l(y)^2
+ \biggl( \sum_{l \in \mathbb N^d} \EE[ \Delta_i^n x_l ] e_l(y) \biggr)^2
\\
&\lesssim 
\sum_{l \in \mathbb N^d} \frac{1 -\ee^{-\lambda_l \Delta_n}}
{\lambda_l^{1+\alpha}} e_l(y)^2
\\
&= \OO( \Delta_n^{(1+\alpha -d/2) \tand 1})
\end{align*}
uniformly in $y \in \overline D$, and see from [A2] and [A3] that 
for any $\epsilon_2 > 0$,
\begin{align*}
\PP(|K_n| > \epsilon_2) 
&\le \PP( \{ |K_n| > \epsilon_2 \} \cap \Omega_1) + \PP(\Omega_1^\mathsf{c})
\\
&\lesssim 
\frac{\sqrt{n}}{\epsilon_2 R^2} 
\sum_{i=1}^n \sum_{j \in \mathbb M_d} \EE \bigl[(\Delta_i^n X(y_{j}))^2 \bigr]
\int_{D_j} \dd y
+ \PP(\Omega_1^\mathsf{c})
\\
&=
\OO \biggl( \frac{n^{3/2} \Delta_n^{(1+\alpha -d/2) \tand 1}}{\epsilon_2 R^2}\biggr)
+\oo(1)
\\
&= \oo(1),
\end{align*}
which together with [A2] yields \eqref{prop2-pf-1}.

\textbf{Step 2:}
We show \eqref{prop2-pf-2}.
By Lemmas \ref{lem5} and \ref{lem6} below, we find that 
for $\alpha \in [0,\infty) \cap (d/2 -1, \infty)$,
\begin{align*}
\EE[(\Delta_i^n X(y) -\Delta_i^n X(z))^2] 
&= \sum_{l_1,l_2 \in \mathbb N^d} 
\EE[ \Delta_i^n x_{l_1} \Delta_i^n x_{l_2} ] 
(e_{l_1}(y) -e_{l_1}(z))(e_{l_2}(y) -e_{l_2}(z))
\\
&\lesssim 
\sum_{l \in \mathbb N^d} \frac{1 -\ee^{-\lambda_l \Delta_n}}{\lambda_l^{1+\alpha}}
(e_l(y) -e_l(z))^2
\\
&= 
\OO \biggl(
\frac{1}{M_{(1)}^{(2(1+\alpha)-d)\widetilde\land 2}}
\biggr) 
\end{align*}
uniformly in $y = (y^{(1)},\ldots,y^{(d)})$, $z = (z^{(1)},\ldots,z^{(d)})$ 
with $|y^{(k)} -z^{(k)}| \le 1/M_k$, $k \in \{ 1, \ldots, d \}$.
Hence, we obtain by [A3], 
\begin{align*}
\sqrt{n} \sum_{i=1}^n S_{2,i}^2
&\lesssim 
\sqrt{n} \sum_{i=1}^n \sum_{j \in \mathbb M_d} \int_{D_j} 
(\Delta_i^n X(y_{j}) -\Delta_i^n X(y))^2
\dd y
\\
&=\Op \biggl(
\frac{n^{3/2}}{M_{(1)}^{(2(1+\alpha)-d)\widetilde\land 2}}
\biggr) 
\\
&= \op(1).
\end{align*}

\textbf{Step 3:}
We show \eqref{prop2-pf-3}. Since
\begin{align*}
S_{2,i} \Delta_i^n x_{\ell}
&= \sum_{j \in \mathbb M_d} 
\int_{D_j} (\Delta_i^n X(y_{j}) -\Delta_i^n X(y)) h_{\ell}(y) \dd y 
\Delta_i^n x_{\ell}
\\
&= \sum_{j \in \mathbb M_d} 
\int_{D_j} \sum_{l \in \mathbb N^d}
(\Delta_i^n x_{\ell})(\Delta_i^n x_l)
(e_l(y_{j}) -e_l(y)) h_{\ell}(y) \dd y,
\end{align*}
$\EE[ \Delta_i^n x_l | \GG_{i-1} ] = \eta_{i,l}$ and 
$\EE[ (\Delta_i^n x_{l_1})(\Delta_i^n x_{l_2}) | \GG_{i-1} ]
= \EE[\zeta_{i,l_1}^2] \ind_{\{ l_1 = l_2 \}} +\eta_{i,l_1} \eta_{i,l_2}$,
it holds that
\begin{align*}
&\bigl| \EE[S_{2,i} \Delta_i^n x_{\ell} | \GG_{i-1} ] \bigr|
\\
&= \Biggl| \sum_{j \in \mathbb M_d} 
\int_{D_j} \sum_{l \in \mathbb N^d}
\EE[(\Delta_i^n x_{\ell})(\Delta_i^n x_l) | \GG_{i-1} ]
(e_l(y_{j}) -e_l(y)) h_{\ell}(y) \dd y 
\Biggr|
\\
&=
\Biggl|
\sum_{j \in \mathbb M_d} 
\int_{D_j} 
\biggl(
\EE[\zeta_{i,{\ell}}^2] 
(e_{\ell}(y_{j}) -e_{\ell}(y)) e_{\ell}(y)
+\eta_{i,\ell} \sum_{l \in \mathbb N^d} 
\eta_{i,l}(e_l(y_{j}) -e_l(y)) h_{\ell}(y)
\biggr) \dd y 
\Biggr|
\\
&\lesssim 
\Delta_n \sum_{j \in \mathbb M_d} \int_{D_j} 
|e_{\ell}(y_{j}) -e_{\ell}(y)| \dd y 
\\
&\quad+
\Biggl| 
\sum_{j \in \mathbb M_d} \int_{D_j} \eta_{i,\ell}
\sum_{l \in \mathbb N^d} \eta_{i,l} (e_l(y_{j}) -e_l(y)) h_{\ell}(y) \dd y 
\Biggr|
\\
&=: \OO \biggl( \frac{\Delta_n}{M_{(1)}} \biggr) +Q_i.
\end{align*}
It follows from [A3] that
\begin{equation*}
\sqrt{n} \sum_{i=1}^n \frac{\Delta_n}{M_{(1)}} = \frac{\sqrt{n}}{M_{(1)}} \to 0.
\end{equation*}
Since it also follows from the Schwarz inequality that
\begin{equation*}
\EE \Biggl[ \biggl( \sqrt{n} \sum_{i=1}^n Q_i \biggr)^2 \Biggr]
\le n^2 \sum_{i=1}^n \EE[Q_i^2],
\end{equation*}
it suffices to show
\begin{equation*}
n^2 \sum_{i=1}^n \EE[Q_i^2] \to 0.
\end{equation*}
Since it holds that
\begin{align*}
\EE[Q_i^2] &=
\EE \Biggl[ \biggl( 
\sum_{j \in \mathbb M_d} \int_{D_j} \eta_{i,\ell}
\sum_{l \in \mathbb N^d} \eta_{i,l} (e_l(y_{j}) -e_l(y)) h_{\ell}(y) \dd y 
\biggr)^2 \Biggr] 
\\
&\le
\EE \Biggl[ 
\sum_{j \in \mathbb M_d} \int_{D_j} \eta_{i,\ell}^2
\biggl(
\sum_{l \in \mathbb N^d} \eta_{i,l}
(e_l(y_{j}) -e_l(y)) h_{\ell}(y) \biggr)^2 \dd y \Biggr] 
\\
&=
\sum_{j \in \mathbb M_d} \int_{D_j}
\sum_{l_1, l_2 \in \mathbb N^d}
\EE [ \eta_{i,\ell}^2 \eta_{i,l_1} \eta_{i,l_2} ]
\\
&\qquad \times 
(e_{l_1}(y_{j}) -e_{l_1}(y)) 
(e_{l_2}(y_{j}) -e_{l_2}(y)) h_{\ell}(y)^2 \dd y
\end{align*}
and
\begin{equation*}
\EE [ \eta_{i,\ell}^2 \eta_{i,l_1} \eta_{i,l_2} ] = 
\begin{cases}
\EE[\eta_{i,\ell}^4], & l_1 = l_2 = \ell,
\\
\EE[\eta_{i,\ell}^2] \EE[\eta_{i,l_1}^2], & l_1 = l_2 \neq \ell,
\\
\EE[\eta_{i,\ell}^3] \EE[\eta_{i,l_2}], & l_1 = \ell \neq l_2,
\\
\EE[\eta_{i,\ell}^3] \EE[\eta_{i,l_1}], & l_2 = \ell \neq l_1,
\\
\EE[\eta_{i,\ell}^2] \EE[\eta_{i,l_1}] \EE[\eta_{i,l_2}], & \text{otherwise},
\end{cases}
\end{equation*}
we obtain by Lemmas \ref{lem4} and \ref{lem6} below, and [A3],
\begin{align*}
\EE[Q_i^2] &\le
\sum_{j \in \mathbb M_d} \int_{D_j}
\sum_{l_1, l_2 \in \mathbb N^d}
\EE [ \eta_{i,\ell}^2 \eta_{i,l_1} \eta_{i,l_2} ]
\\
&\qquad \times 
(e_{l_1}(y_{j}) -e_{l_1}(y))
(e_{l_2}(y_{j}) -e_{l_2}(y)) h_{\ell}(y)^2 \dd y
\\
&\lesssim
\EE [ \eta_{i,\ell}^4 ] 
\sum_{j \in \mathbb M_d} \int_{D_j} (e_{\ell}(y_{j}) -e_{\ell}(y))^2 h_{\ell}(y)^2 \dd y
\\
&\qquad+ 
\bigl| \EE [ \eta_{i,\ell}^3] \bigr|
\sum_{j \in \mathbb M_d} \int_{D_j}
\bigl| (e_{\ell}(y_{j}) -e_{\ell}(y)) h_{\ell}(y) \bigr|
\\
& \qquad \qquad \times
\biggl| \sum_{l \in \mathbb N^d}
\EE[ \eta_{i,l} ] (e_l(y_{j}) -e_l(y)) h_{\ell}(y) 
\biggr| \dd y
\\
&\qquad+
\EE [ \eta_{i,\ell}^2] 
\sum_{j \in \mathbb M_d} \int_{D_j} 
\sum_{l \in \mathbb N^d} \EE[\eta_{i,l}^2 ] 
(e_l(y_{j}) -e_l(y))^2 h_{\ell}(y)^2 \dd y
\\
&\qquad+
\EE [ \eta_{i,\ell}^2] 
\sum_{j \in \mathbb M_d} \int_{D_j}
\biggl( \sum_{l \in \mathbb N^d} \EE[\eta_{i,l} ] 
(e_l(y_{j}) -e_l(y)) h_{\ell}(y)
\biggr)^2
\dd y
\\
&\lesssim
\Delta_n^4 \sum_{j \in \mathbb M_d} \int_{D_j} 
(e_{\ell}(y_{j}) -e_{\ell}(y))^2 \dd y
\\
&\qquad+ \Delta_n^3 \sum_{j \in \mathbb M_d} \int_{D_j}
|e_{\ell}(y_{j}) -e_{\ell}(y)|
\biggl( \sum_{l \in \mathbb N^d} 
\frac{(e_l(y_{j}) -e_l(y))^2}{\lambda_l^{1+\alpha}} \biggr)^{1/2} \dd y
\\
&\qquad + \Delta_n^2 \sum_{j \in \mathbb M_d} \int_{D_j}
\sum_{l \in \mathbb N^d}
\frac{(e_l(y_{j}) -e_l(y))^2}{\lambda_l^{1+\alpha}} \dd y
\\
&= \OO \biggl(\frac{\Delta_n^4}{M_{(1)}^2} \biggr)
+\OO \biggl(
\frac{\Delta_n^3}{M_{(1)}^{1+(1+\alpha-d/2)\widetilde\land 1}}
\biggr) 
+\OO \biggl(
\frac{\Delta_n^2}{M_{(1)}^{(2(1+\alpha)-d)\widetilde\land 2}}
\biggr) 
\\
&= \oo(\Delta_n^3)
\end{align*}
uniformly in $i$.

\textbf{Step 4:}
We show \eqref{prop2-pf-4}. We have
\begin{align*}
\EE[(S_{2,i} \Delta_i^n x_{\ell})^2] 
&\le 
\sum_{j \in \mathbb M_d} 
\int_{D_j} 
\EE \Biggl[
\biggl(
\sum_{l \in \mathbb N^d}
(\Delta_i^n x_{\ell})(\Delta_i^n x_l)
(e_l(y_{j}) -e_l(y)) h_{\ell}(y) 
\biggr)^2
\Biggr] \dd y
\\
&= 
\sum_{j \in \mathbb M_d} \int_{D_j}
\sum_{l_1, l_2 \in \mathbb N^d}
\EE [ (\Delta_i^n x_{\ell})^2 \Delta_i^n x_{l_1} \Delta_i^n x_{l_2} ]
\\
&\qquad \times 
(e_{l_1}(y_{j}) -e_{l_1}(y)) 
(e_{l_2}(y_{j}) -e_{l_2}(y)) h_{\ell}(y)^2 \dd y
\end{align*}
and
\begin{equation*}
\EE [ (\Delta_i^n x_{\ell})^2 \Delta_i^n x_{l_1} \Delta_i^n x_{l_2} ] = 
\begin{cases}
\EE[(\Delta_i^n x_{\ell})^4], & l_1 = l_2 = \ell,
\\
\EE[(\Delta_i^n x_{\ell})^2] \EE[(\Delta_i^n x_{l_1})^2], 
& l_1 = l_2 \neq \ell,
\\
\EE[(\Delta_i^n x_{\ell})^3] \EE[\Delta_i^n x_{l_2}], 
& l_1 = \ell \neq l_2,
\\
\EE[(\Delta_i^n x_{\ell})^3] \EE[\Delta_i^n x_{l_1}], 
& l_2 = \ell \neq l_1,
\\
\EE[(\Delta_i^n x_{\ell})^2] \EE[\Delta_i^n x_{l_1}] \EE[\Delta_i^n x_{l_2}], 
& \text{otherwise}.
\end{cases}
\end{equation*}
By Lammas \ref{lem5} and \ref{lem6} below, and [A3], we obtain
\begin{align*}
\EE[(S_{2,i} \Delta_i^n x_{\ell})^2] 
&\le 
\sum_{j \in \mathbb M_d} \int_{D_j}
\sum_{l_1, l_2 \in \mathbb N^d}
\EE [ (\Delta_i^n x_{\ell})^2 \Delta_i^n x_{l_1} \Delta_i^n x_{l_2} ]
\\
&\qquad \times 
(e_{l_1}(y_{j}) -e_{l_1}(y)) 
(e_{l_2}(y_{j}) -e_{l_2}(y)) e_{\ell}(y)^2 \dd y
\\
&\lesssim
\EE [ (\Delta_i^n x_{\ell})^4 ]
\sum_{j \in \mathbb M_d} \int_{D_j} 
(e_{\ell}(y_{j}) -e_{\ell}(y))^2 h_{\ell}(y)^2 \dd y
\\
&\qquad+
\bigl| \EE [ (\Delta_i^n x_{\ell})^3] \bigr|
\sum_{j \in \mathbb M_d} \int_{D_j}
\bigl| (e_{\ell}(y_{j}) -e_{\ell}(y)) h_{\ell}(y) \bigr|
\\
& \qquad \qquad \times
\biggl| \sum_{l \in \mathbb N^d}
\EE[\Delta_i^n x_{\ell}] (e_l(y_{j}) -e_l(y)) 
h_{\ell}(y) \biggr| \dd y
\\
&\qquad+
\EE [ (\Delta_i^n x_{\ell})^2] 
\sum_{j \in \mathbb M_d} \int_{D_j}
\sum_{l \in \mathbb N^d}
\EE[(\Delta_i^n x_l)^2] (e_l(y_{j}) -e_l(y))^2 h_{\ell}(y)^2 \dd y
\\
&\qquad+
\EE [ (\Delta_i^n x_{\ell})^2] 
\sum_{j \in \mathbb M_d} \int_{D_j}
\biggl( \sum_{l \in \mathbb N^d}
\EE[\Delta_i^n x_l]  
(e_l(y_{j}) -e_l(y)) h_{\ell}(y)
\biggr)^2
\dd y
\\
&\lesssim
\Delta_n^2 \sum_{j \in \mathbb M_d} \int_{D_j} (e_{\ell}(y_{j}) -e_{\ell}(y))^2 \dd y
\\
&\qquad+ \Delta_n^2 \sum_{j \in \mathbb M_d} \int_{D_j}
|e_{\ell}(y_{j}) -e_{\ell}(y)|
\biggl( \sum_{l \in \mathbb N^d} 
\frac{(e_l(y_{j}) -e_l(y))^2}{\lambda_l^{1+\alpha}} \biggr)^{1/2} \dd y
\\
&\qquad + \Delta_n \sum_{j \in \mathbb M_d} \int_{D_j}
\sum_{l \in \mathbb N^d}
\frac{(e_l(y_{j}) -e_l(y))^2}{\lambda_l^{1+\alpha}} \dd y
\\
&= \OO \biggl(\frac{\Delta_n^2}{M_{(1)}^2} \biggr)
+\OO \biggl(
\frac{\Delta_n^2}{M_{(1)}^{1+((2(1+\alpha)-d) \tand 2)/2}}
\biggr) 
+\OO \biggl(
\frac{\Delta_n}{M_{(1)}^{(2(1+\alpha)-d) \tand 2}}
\biggr) 
\\
&= \oo(\Delta_n^2)
\end{align*}
uniformly in $i$, which yields \eqref{prop2-pf-4}. 
\end{proof}
We find from the above proof that 
if the conditions [A1]--[A3] hold true, then
it also follows that $U_n -U_n^* = \op(1)$ under $H_1$.
From the proof of Proposition \ref{prop2}, we obtain the following result.
\begin{prop}\label{prop0}
It follows that under [A1]--[A3], 
\begin{equation*}
\sqrt{n} \biggl( \sum_{i=1}^n (\Delta_i^n \widehat x_{\ell})^2
-\sum_{i=1}^n (\Delta_i^n x_{\ell})^2 \biggr)
= \op(1).
\end{equation*}
\end{prop}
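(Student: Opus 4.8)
The plan is to extract this statement directly from the proof of Proposition~\ref{prop2}. Taking $k = n$ in the chain of estimates established there gives
\[
\sqrt{n}\left| \sum_{i=1}^n (\Delta_i^n \widehat x_\ell)^2 - \sum_{i=1}^n (\Delta_i^n x_\ell)^2 \right| \le \sqrt{n} \max_{1 \le k \le n} \left| \sum_{i=1}^k \bigl((\Delta_i^n \widehat x_\ell)^2 - (\Delta_i^n x_\ell)^2\bigr) \right| = |U_n - U_n^*|,
\]
so it suffices to show $|U_n - U_n^*| = \op(1)$ under [A1]--[A3] alone. The key observation is that in the proof of Proposition~\ref{prop2} the assumption [B] is used only to replace $1/\widehat\beta_\ell^2$ by $1/(\beta_\ell^*)^2$ in the bound for $|T_n - T_n^*|$; the estimate $|U_n - U_n^*| = \op(1)$ itself is reduced, via the identity $(\Delta_i^n \widehat x_\ell)^2 - (\Delta_i^n x_\ell)^2 = (\Delta_i^n \widehat x_\ell - \Delta_i^n x_\ell)(\Delta_i^n \widehat x_\ell - \Delta_i^n x_\ell + 2\Delta_i^n x_\ell)$ and the decomposition $\Delta_i^n \widehat x_\ell - \Delta_i^n x_\ell = S_{1,i} + S_{2,i}$, to the four convergences \eqref{prop2-pf-1}--\eqref{prop2-pf-4}, each of which was verified using only [A1]--[A3] together with the moment bounds of Lemmas~\ref{lem4}--\ref{lem6} and the maximal-type Lemma~\ref{lem3}.

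Concretely, I would repeat the estimate
\[
\sqrt{n}\left| \sum_{i=1}^n \bigl((\Delta_i^n \widehat x_\ell)^2 - (\Delta_i^n x_\ell)^2\bigr) \right| \lesssim \sqrt{n}\sum_{i=1}^n (S_{1,i}^2 + S_{2,i}^2) + \sqrt{n}\left| \sum_{i=1}^n S_{1,i}\Delta_i^n x_\ell \right| + \sqrt{n}\left| \sum_{i=1}^n S_{2,i}\Delta_i^n x_\ell \right|,
\]
and bound the four pieces exactly as in Steps~1--4 of the proof of Proposition~\ref{prop2}: $\sqrt{n}\sum_i S_{1,i}^2 = \op(1)$ by \eqref{prop2-pf-1} (which already yields the stronger $n\sum_i S_{1,i}^2 = \op(1)$); $\sqrt{n}\sum_i S_{2,i}^2 = \op(1)$ by \eqref{prop2-pf-2}; $\sqrt{n}|\sum_i S_{1,i}\Delta_i^n x_\ell| \le (n\sum_i S_{1,i}^2 \cdot \sum_i (\Delta_i^n x_\ell)^2)^{1/2} = \op(1)$ since $\sum_i (\Delta_i^n x_\ell)^2 = \Op(1)$; and $\sqrt{n}|\sum_i S_{2,i}\Delta_i^n x_\ell| = \op(1)$ by \eqref{prop2-pf-3}, \eqref{prop2-pf-4} and Lemma~\ref{lem3} applied with $k = n$ (splitting $S_{2,i}\Delta_i^n x_\ell$ into its $\GG_{i-1}$-conditional mean, controlled by \eqref{prop2-pf-3}, and a martingale difference, whose conditional second moment is controlled by \eqref{prop2-pf-4}).

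I do not expect a genuine obstacle here: the proposition is essentially a by-product of the proof of Proposition~\ref{prop2}, and the only point requiring care is the bookkeeping check that none of the cited ingredients --- \eqref{prop2-pf-1}--\eqref{prop2-pf-4}, the bound $\sum_i (\Delta_i^n x_\ell)^2 = \Op(1)$, Lemma~\ref{lem3}, and Lemmas~\ref{lem4}--\ref{lem6} --- relies on the null hypothesis $H_0$ or on [B]. Indeed the $S_{1,i}$ estimates invoke only [A2]--[A3] and the moment bounds of Lemmas~\ref{lem5}--\ref{lem6}, while the $S_{2,i}$ estimates invoke only [A3] and Lemmas~\ref{lem4}--\ref{lem6}, none of which presuppose a constant volatility; this is precisely the remark made just before the statement of Proposition~\ref{prop0}, which already records that $U_n - U_n^* = \op(1)$ holds under [A1]--[A3] even under $H_1$.
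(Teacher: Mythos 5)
Your proposal is correct and follows essentially the same route as the paper, which itself presents Proposition \ref{prop0} as a by-product of the proof of Proposition \ref{prop2}: the assumption [B] (and $H_0$) enters only through the replacement of $1/\widehat\beta_{\ell}^2$ by $1/(\beta_{\ell}^*)^2$, while the estimates \eqref{prop2-pf-1}--\eqref{prop2-pf-4}, the bound $\sum_{i=1}^n(\Delta_i^n x_{\ell})^2=\Op(1)$, and Lemmas \ref{lem3}--\ref{lem6} use only [A1]--[A3]. One cosmetic slip: in your first display the final ``$=|U_n-U_n^*|$'' should be an inequality (the maximum dominates $|U_n-U_n^*|$ up to a constant, not equals it), but this does not affect anything since your Steps 1--4 bound the quantity in Proposition \ref{prop0} directly.
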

We find that [A3] is a balance condition of $n$, $N$ and $M$ for asymptotic normality 
of the estimators of the coefficient parameters 
based on the approximate coordinate processes suggested by 
\cite{Kaino_Uchida2021a} or \cite{TKU2023b}.

\subsection{Proof of Theorem \ref{th2}}

For any $\epsilon >0$, we have
\begin{equation*}
\PP(T_n \ge \epsilon) = \PP(\widehat \beta_{\ell}^{-2} U_n \ge \epsilon)
\ge \PP(\widehat \beta_{\ell}^{-2} U_n^* \ge 2 \epsilon)
- \PP(\widehat \beta_{\ell}^{-2} |U_n -U_n^*| \ge \epsilon).
\end{equation*}
Since 
$\widehat \beta_{\ell}^2 \pto (\beta_{\ell}^\dag)^2$ under $H_1$ 
and $U_n -U_n^* = \op(1)$, 
we obtain
\begin{align*}
\lim_{n \to \infty} 
\PP(\widehat \beta_{\ell}^{-2} |U_n -U_n^*| \ge \epsilon) &= 0,
\\
\varliminf_{n \to \infty} \PP ( (\beta_{\ell}^\dag)^{-2} U_n^* \ge 4 \epsilon )
&=
\varliminf_{n \to \infty}
\PP \biggl( \{ (\beta_{\ell}^\dag)^{-2} U_n^* \ge 4 \epsilon \} 
\cap 
\biggl\{ \widehat \beta_{\ell}^{-2} \ge 
\frac{(\beta_{\ell}^\dag)^{-2}}{2} 
\biggr\} \biggr)
\\
&\le \varliminf_{n \to \infty}
\PP (\widehat \beta_{\ell}^{-2} U_n^* \ge 2 \epsilon)
\end{align*}
Hence, we show
\begin{equation}\label{prop3}
\varliminf_{n \to \infty} \PP( U_n^* \ge \epsilon ) = 1
\end{equation}
for any $\epsilon >0$ under $H_1$ in order to show Theorem \ref{th2}.

\textit{Proof of \eqref{prop3}.}
Let $\beta_{j,\ell} = \sigma_j^{\dag} \gamma_{\ell}^{-\alpha/2}$ 
for $j \in \{ 1,\ldots, r+1 \}$.
Since it holds that under $H_1$, 
\begin{equation*}
\sum_{i= \lfloor n \tau_{j-1} \rfloor +1}^{\lfloor n \tau_j \rfloor} 
(\Delta_i^n x_{\ell})^2 
\pto \beta_{j,\ell}^2 (\tau_j -\tau_{j-1}),
\quad j \in \{ 1, \ldots, r +1 \},
\end{equation*}
we have 
\begin{align}
D_{n,k}^* 
&= \sum_{i=1}^{\lfloor n \tau_k \rfloor} (\Delta_i^n x_{\ell})^2
-\frac{\lfloor n \tau_k \rfloor}{n} \sum_{i=1}^n (\Delta_i^n x_{\ell})^2
\nonumber
\\
&\pto
\sum_{j=1}^k \beta_{j,\ell}^2 (\tau_j -\tau_{j-1})
-\tau_k \sum_{j=1}^{r+1} \beta_{j,\ell}^2 (\tau_j -\tau_{j-1})
\nonumber
\\
&= (1 -\tau_k) \sum_{j=1}^k \beta_{j,\ell}^2 (\tau_j -\tau_{j-1})
-\tau_k \sum_{j = k+1}^{r+1} \beta_{j,\ell}^2 (\tau_j -\tau_{j-1})
=: c_k \in \mathbb R
\label{Dc}
\end{align}
for $k \in \{ 1, \ldots, r \}$.
It can then be expressed as follows.
\begin{equation*}
\begin{pmatrix}
c_1
\\
\vdots
\\
c_r
\end{pmatrix}
= 
\begin{pmatrix}
\tau_1 (1-\tau_1) & -\tau_1(\tau_2 -\tau_1) & \cdots 
& -\tau_1(\tau_r -\tau_{r-1}) & -\tau_1 (1-\tau_r) 
\\
\tau_1 (1-\tau_2) & (1-\tau_2)(\tau_2 -\tau_1) &  
& -\tau_2(\tau_r -\tau_{r-1}) & -\tau_2 (1-\tau_r) 
\\
\vdots & & \ddots & & \vdots 
\\
\tau_1(1-\tau_r) & (1-\tau_r)(\tau_2 -\tau_1) & \cdots 
& (1-\tau_r)(\tau_r -\tau_{r-1}) & -\tau_r (1-\tau_r) 
\end{pmatrix}
\begin{pmatrix}
\beta_{1,\ell}^2
\\
\vdots
\\
\beta_{r+1,\ell}^2
\end{pmatrix}.
\end{equation*}
By elementary row operations, we have
\begin{align*}
&
\begin{pmatrix}
\tau_1 (1-\tau_1) & -\tau_1(\tau_2 -\tau_1) & \cdots 
& -\tau_1(\tau_r -\tau_{r-1}) & -\tau_1 (1-\tau_r) 
\\
\tau_1 (1-\tau_2) & (1-\tau_2)(\tau_2 -\tau_1) &  
& -\tau_2(\tau_r -\tau_{r-1}) & -\tau_2 (1-\tau_r) 
\\
\vdots & \vdots & \ddots & & \vdots 
\\
\tau_1(1-\tau_r) & (1-\tau_r)(\tau_2 -\tau_1) & \cdots 
& (1-\tau_r)(\tau_r -\tau_{r-1}) & -\tau_r (1-\tau_r) 
\end{pmatrix}
\\
&\to
\begin{pmatrix}
1-\tau_1 & -(\tau_2 -\tau_1) & \cdots 
& -(\tau_r -\tau_{r-1}) & -(1-\tau_r) 
\\
\tau_1 (1-\tau_2) & (1-\tau_2)(\tau_2 -\tau_1) &  
& -\tau_2(\tau_r -\tau_{r-1}) & -\tau_2 (1-\tau_r) 
\\
\vdots & \vdots &  & \vdots & \vdots 
\\
\tau_1 (1-\tau_{r-1}) & (1-\tau_{r-1})(\tau_2 -\tau_1) &  
& -\tau_{r-1}(\tau_r -\tau_{r-1}) & -\tau_{r-1} (1-\tau_r) 
\\
\tau_1 & \tau_2 -\tau_1 & \cdots 
& \tau_r -\tau_{r-1} & -\tau_r 
\end{pmatrix}
\\
&\to
\begin{pmatrix}
1 & 0 & \cdots & 0 & -1 
\\
\tau_1 & \tau_2 -\tau_1 &  & 0 & -\tau_2
\\
\vdots & \vdots & \ddots & & \vdots 
\\
\tau_1 & \tau_2 -\tau_1 & \cdots 
& \tau_r -\tau_{r-1} & -\tau_r 
\end{pmatrix}
\\
&\to
\begin{pmatrix}
1 & 0 & \cdots & 0 & -1 
\\
0 & \tau_2 -\tau_1 &  & 0 & -(\tau_2 -\tau_1)
\\
\vdots & \vdots & \ddots & & \vdots 
\\
0 & \tau_2 -\tau_1 & \cdots 
& \tau_r -\tau_{r-1} & -(\tau_r -\tau_1) 
\end{pmatrix}
\\
&\to
\begin{pmatrix}
1 & 0 & 0 & \cdots & 0 & -1 
\\
0 & \tau_2 -\tau_1 & 0 & & 0 & -(\tau_2 -\tau_1)
\\
0 & 0 & \tau_3 -\tau_2 &  & 0 & -(\tau_3 -\tau_2)
\\
\vdots & \vdots & \vdots & \ddots & & \vdots 
\\
0 & 0 & \tau_3 -\tau_2 & & \tau_r -\tau_{r-1} & -(\tau_r -\tau_2) 
\end{pmatrix}
\\
&\to \cdots
\\
&\to
\begin{pmatrix}
1 & 0 & \cdots & 0 & -1 
\\
0 & \tau_2 -\tau_1 &  & 0 & -(\tau_2 -\tau_1)
\\
\vdots & & \ddots & & \vdots 
\\
0 & 0 & & \tau_r -\tau_{r-1} & -(\tau_r -\tau_{r-1}) 
\end{pmatrix}
\\
&\to
\begin{pmatrix}
I_{r} & -1_{r}
\end{pmatrix},
\end{align*}
where $I_r$ is the $r$-dimensional identity matrix. 
Since 
$(c_1, \ldots, c_r)^\TT =0$ if and only if 
$\beta_{1,\ell}^2 = \cdots = \beta_{r+1,\ell}^2$, 
there exists $q \in \{ 1, \ldots, r \}$ such that $c_q \neq 0$
under $\sigma_j^{\dag} \neq \sigma_{j+1}^{\dag}$ for $j \in \{ 1, \ldots, r \}$. 
Therefore, we obtain by \eqref{Dc}, 
\begin{align*}
\PP(U_n^* < \epsilon) 
&\le \PP \biggl( \sqrt{\frac{n}{2}} |D_{n,q}^*| < \epsilon \biggr)
\\
&\le \PP \biggl( \biggl\{ \sqrt{\frac{n}{2}} |D_{n,q}^*| < \epsilon \biggr\} \cap 
\biggl\{ |D_{n,q}^* -c_q| < \frac{|c_q|}{2} \biggr\} \biggr) 
+\PP \biggl( |D_{n,q}^* -c_q| \ge \frac{|c_q|}{2} \biggr)
\\
&\le \PP \biggl(\biggl\{ \sqrt{\frac{n}{2}} |D_{n,q}^*| < \epsilon \biggr\} \cap 
\biggl\{ \frac{|c_q|}{2} < |D_{n,q}^*| \biggr\} \biggr) 
+\PP \biggl( |D_{n,q}^* -c_q| \ge \frac{|c_q|}{2} \biggr)
\\
&\le \PP \biggl( \sqrt{\frac{n}{8}}|c_q| < \epsilon \biggr) 
+\PP \biggl( |D_{n,q}^* -c_q| \ge \frac{|c_q|}{2} \biggr)
\\
&\to 0
\end{align*}
for any $\epsilon >0$, and this concludes the proof.

\subsection{Auxiliary results}

\begin{lem}\label{lem3}
Let $\left\{ \chi_{i,n} \right\}_{i \in \mathbb{N}}$ be $\FF_i$-measurable random variables.
For a positive integer sequence $\{ m_n \}_{n \in \mathbb N}$ such that 
$m_n \to \infty$, if 
\begin{equation*}
\sum_{i=1}^{m_n} |\EE[\chi_{i,n}|\FF_{i-1}]| \pto 0, 
\quad
\sum_{i=1}^{m_n} \EE[\chi_{i,n}^2|\FF_{i-1}] \pto 0, 
\end{equation*}
then
\begin{equation*}
\max_{1 \le k \le m_n} \biggl| \sum_{i=1}^k \chi_{i,n} \biggr| \pto 0.
\end{equation*}
\end{lem}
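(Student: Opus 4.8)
The plan is to decompose each $\chi_{i,n}$ into its $\FF_{i-1}$-conditional mean $d_{i,n} := \EE[\chi_{i,n}\mid\FF_{i-1}]$ and the resulting martingale difference $\xi_{i,n} := \chi_{i,n} - d_{i,n}$. The ``predictable drift'' part is trivial: by the triangle inequality $\max_{1\le k\le m_n}|\sum_{i=1}^k d_{i,n}| \le \sum_{i=1}^{m_n}|d_{i,n}|$, which tends to $0$ in probability by the first hypothesis. Thus it remains to show $\max_{1\le k\le m_n}|M_k^n| \pto 0$, where $M_k^n := \sum_{i=1}^k\xi_{i,n}$ is, for each fixed $n$, a martingale with respect to $(\FF_k)_k$.

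For the martingale term the natural tool is Doob's $L^2$ maximal inequality, but it cannot be applied directly because the control on the conditional variances holds only in probability. Write $V_k^n := \sum_{i=1}^k\EE[\xi_{i,n}^2\mid\FF_{i-1}]$; since $\EE[\xi_{i,n}^2\mid\FF_{i-1}] = \EE[\chi_{i,n}^2\mid\FF_{i-1}] - d_{i,n}^2 \le \EE[\chi_{i,n}^2\mid\FF_{i-1}]$, we have $V_{m_n}^n \le \sum_{i=1}^{m_n}\EE[\chi_{i,n}^2\mid\FF_{i-1}] \pto 0$. Fix $\epsilon>0$ and $\eta>0$ and set $\rho_n := \max\{0\le k\le m_n : V_k^n\le\eta\}$ (with $V_0^n := 0$); this is a stopping time because $V_\cdot^n$ is predictable ($V_k^n$ is $\FF_{k-1}$-measurable), it satisfies $V_{\rho_n}^n\le\eta$ by construction, and $\rho_n = m_n$ on $\{V_{m_n}^n\le\eta\}$.

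Now apply Doob's maximal inequality to the stopped martingale $(M_{k\wedge\rho_n}^n)_{0\le k\le m_n}$: since $\{i\le\rho_n\}\in\FF_{i-1}$, the stopped increments $\xi_{i,n}\ind_{\{i\le\rho_n\}}$ are orthogonal martingale differences with $\sum_{i=1}^{m_n}\EE[\xi_{i,n}^2\ind_{\{i\le\rho_n\}}] = \EE[V_{\rho_n}^n] \le \eta$ by the tower property (this also shows the stopped martingale is square-integrable, so no a priori integrability of the $\chi_{i,n}$ is needed), whence $\EE[(M_{m_n\wedge\rho_n}^n)^2]\le\eta$ and $\PP(\max_{k\le m_n}|M_{k\wedge\rho_n}^n|\ge\epsilon)\le\eta/\epsilon^2$. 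Because the stopped and original processes coincide off $\{V_{m_n}^n>\eta\}$,
\[
\PP\Bigl(\max_{1\le k\le m_n}|M_k^n|\ge\epsilon\Bigr) \le \frac{\eta}{\epsilon^2} + \PP(V_{m_n}^n>\eta).
\]
Letting $n\to\infty$ and then $\eta\downarrow 0$ gives $\max_{1\le k\le m_n}|M_k^n|\pto 0$, which together with the drift estimate proves the lemma. (One may instead invoke Lenglart's domination inequality, with the submartingale $(M_k^n)^2$ dominated by the predictable increasing process $V_k^n$, to reach the same displayed bound.)

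The only delicate point is precisely this conversion of the in-probability hypotheses into an exploitable bound; the localization at level $\eta$ above is the standard device for it, and I do not expect any genuine difficulty here — the routine moment estimates needed elsewhere in the proofs are far more involved.
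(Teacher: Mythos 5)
Your proof is correct and follows essentially the same route as the paper: the same decomposition into the predictable drift (killed by the triangle inequality and the first hypothesis) and the centered martingale part, whose maximum is controlled through the predictable quadratic variation, bounded in turn by $\sum_{i}\EE[\chi_{i,n}^2\mid\FF_{i-1}]$. The only difference is that the paper cites Lenglart's inequality to get $\PP\bigl(\max_{1\le k\le m_n}B_{k,n}^2>a\bigr)\le \frac{b}{a}+\PP(D_{m_n,n}>b)$ directly, whereas you reprove that domination inline via the predictable stopping time $\rho_n$ and Doob's $L^2$ maximal inequality — an alternative you yourself acknowledge in your closing parenthesis.
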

\begin{proof}
The essence of the proof is same as Lemma 9 in \cite{Genon-Catalot_Jacod1993}.
Let 
\begin{align*}
\xi_{i,n} &= \chi_{i,n} -\EE[\chi_{i,n} | \FF_{i-1}], 
\quad
B_{k,n} = \sum_{i=1}^{k} \xi_{i,n},
\\
C_{k,n} &= \sum_{i=1}^{k} \EE[\xi_{i,n}^2 | \FF_{i-1}],
\quad
D_{k,n} = \sum_{i=1}^{k} \EE[\chi_{i,n}^2 | \FF_{i-1}].
\end{align*}
It follows from the Lenglart inequality
(see e.g., (4.15)$'$ in \cite{Karatzas_Shrve2012}) that for any $a,b > 0$, 
\begin{equation*}
\PP \biggl( \max_{1 \le k \le m_n} B_{k,n}^2 > a \biggr) 
\le \frac{b}{a} + \PP(C_{m_n,n} > b) \le \frac{b}{a} + \PP(D_{m_n,n} > b).
\end{equation*}
Since $\PP(D_{m_n,n} > b) \to 0$, we get
\begin{equation*}
\PP \biggl( \max_{1 \le k \le m_n} B_{k,n}^2 > a \biggr) \to 0,
\end{equation*}
which together with 
\begin{equation*}
\max_{1 \le k \le m_n} \biggl| \sum_{i=1}^k \chi_{i,n} \biggr|
\le \max_{1 \le k \le m_n} |B_{k,n}| + \sum_{i=1}^{m_n} |\EE[\chi_{i,n}|\FF_{i-1}]|
\end{equation*}
yields the desired result.
\end{proof}

Noting that $\eta_{i,l} = (\ee^{-\lambda_l \Delta_n} -1) x_l(t_{i-1}^n)$ and 
\begin{equation*}
x_l(t) = \ee^{-\lambda t} x_l(0) + \int_0^t \beta_l(s) \ee^{-\lambda_l(t -s)} \dd w_l(s),
\end{equation*}
we have the following result.
\begin{lem}\label{lem4}
Let $\{ g_l \}_{l \in \mathbb N^d}$ be measurable functions on $D$.
\begin{enumerate}
\item[(1)]
For a fixed $l \in \mathbb N^d$ and $k \in \{1,\ldots,4\}$, it follows that
\begin{equation*}
\EE[\eta_{i,l}^k] \lesssim \Delta_n^k
\end{equation*}
uniformly in $i$ under $\EE[x_l(0)^4] < \infty$.

\item[(2)]
Under [A1]-(i), it follows that 
\begin{equation*}
\sum_{l \in \mathbb N^d} \EE[\eta_{i,l}] g_l(y)
\lesssim 
\biggl(
\sum_{l \in \mathbb N^d} \frac{(1 -\ee^{-\lambda_l \Delta_n})^2}
{\lambda_l^{1+\alpha}} g_l(y)^2
\biggr)^{1/2}.
\end{equation*}

\item[(3)]
Under [A1]-(i), it follows that
\begin{equation*}
\sum_{l \in \mathbb N^d} \EE[\eta_{i,l}^2] g_l(y)^2
\lesssim 
\sum_{l \in \mathbb N^d} \frac{(1 -\ee^{-\lambda_l \Delta_n})^2}
{\lambda_l^{1+\alpha}} g_l(y)^2.
\end{equation*}
\end{enumerate}
\end{lem}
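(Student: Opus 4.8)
The plan is to reduce all three assertions to the elementary inequality $0\le 1-\ee^{-x}\le x$ for $x\ge 0$ (so that $|1-\ee^{-\lambda_l\Delta_n}|\le\lambda_l\Delta_n$ and $\ee^{-\lambda_l t}\le 1$) together with a single moment estimate for the coordinate process. Writing $x_l(t)=\ee^{-\lambda_l t}x_l(0)+I_l(t)$ with $I_l(t)=\int_0^t\beta_l(s)\ee^{-\lambda_l(t-s)}\dd w_l(s)$, the variable $I_l(t)$ is centered Gaussian with variance $\int_0^t\beta_l(s)^2\ee^{-2\lambda_l(t-s)}\dd s\le \|\beta_l\|_\infty^2/(2\lambda_l)$, where $\|\beta_l\|_\infty^2=\gamma_l^{-\alpha}\sup_{s\in[0,1]}\sigma(s)^2$ is finite ($\sigma$ is constant under $H_0$ and a finite step function under $H_1$). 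Since $\gamma_l\sim|l|_2^2\sim\lambda_l$ and $\alpha\ge 0$, this gives $\|\beta_l\|_\infty^2/(2\lambda_l)\lesssim\lambda_l^{-(1+\alpha)}$, uniformly in $l$. Combining this with $\ee^{-2\lambda_l t}\le 1$ and $\sup_{l}\lambda_l^{1+\alpha}\EE[x_l(0)^2]<\infty$ — which holds under either alternative of [A1]-(i), being the assumption itself in case a) and a consequence of $\sum_l\lambda_l^{1+\alpha}\EE[x_l(0)^2]=\EE[\|A_\theta^{(1+\alpha)/2}X_0\|^2]<\infty$ in case b) — yields the key estimate
\[
\EE[x_l(t)^2]\lesssim\lambda_l^{-(1+\alpha)}\qquad\text{uniformly in }l\in\mathbb N^d,\ t\in[0,1].
\]
As the grid points $t_{i-1}^n$ lie in $[0,1]$, this bound is in particular uniform in $i$ and $n$.

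For (1), with $l$ fixed the factor $\lambda_l$ is a harmless constant, so $|\EE[\eta_{i,l}^k]|=|1-\ee^{-\lambda_l\Delta_n}|^k\,|\EE[x_l(t_{i-1}^n)^k]|\le(\lambda_l\Delta_n)^k\,\EE[|x_l(t_{i-1}^n)|^k]$, and it remains to bound $\EE[|x_l(t)|^k]$ uniformly in $t\in[0,1]$ for $k\le 4$. Using $(a+b)^k\lesssim a^k+b^k$, the contribution of $\ee^{-\lambda_l t}x_l(0)$ is bounded by $\EE[|x_l(0)|^k]\le\EE[x_l(0)^4]^{k/4}<\infty$ — this is exactly where $\EE[x_l(0)^4]<\infty$ is used — while the contribution of the Gaussian term $I_l(t)$ is controlled by the $k/2$-th power of the uniform variance bound $\|\beta_l\|_\infty^2/(2\lambda_l)$, hence bounded uniformly in $t$. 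This gives $|\EE[\eta_{i,l}^k]|\lesssim\Delta_n^k$, uniformly in $i$.

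For (2), note $\EE[\eta_{i,l}]=(\ee^{-\lambda_l\Delta_n}-1)\ee^{-\lambda_l t_{i-1}^n}\EE[x_l(0)]$, so $|\EE[\eta_{i,l}]|\le|1-\ee^{-\lambda_l\Delta_n}|\,|\EE[x_l(0)]|$, and I would apply the Cauchy--Schwarz inequality with the weight $\lambda_l^{(1+\alpha)/2}$:
\[
\Bigl|\sum_{l\in\mathbb N^d}\EE[\eta_{i,l}]g_l(y)\Bigr|
\le\Bigl(\sum_{l\in\mathbb N^d}\frac{(1-\ee^{-\lambda_l\Delta_n})^2}{\lambda_l^{1+\alpha}}g_l(y)^2\Bigr)^{1/2}
\Bigl(\sum_{l\in\mathbb N^d}\lambda_l^{1+\alpha}\EE[x_l(0)]^2\Bigr)^{1/2},
\]
where the second factor is a finite constant because $\sum_l\lambda_l^{1+\alpha}\EE[x_l(0)]^2\le\sum_l\lambda_l^{1+\alpha}\EE[x_l(0)^2]<\infty$ under [A1]-(i) (this sum is in fact $0$ in case a), since then $\EE[x_l(0)]=0$). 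For (3), the key estimate gives $\EE[\eta_{i,l}^2]=(1-\ee^{-\lambda_l\Delta_n})^2\EE[x_l(t_{i-1}^n)^2]\lesssim(1-\ee^{-\lambda_l\Delta_n})^2\lambda_l^{-(1+\alpha)}$, and multiplying by $g_l(y)^2$ and summing over $l$ finishes the proof. The only step requiring genuine care is the uniform-in-$l$ moment bound displayed above — in particular verifying that both branches of [A1]-(i) supply $\sup_l\lambda_l^{1+\alpha}\EE[x_l(0)^2]<\infty$ and that $\gamma_l^{-\alpha}\lesssim\lambda_l^{-\alpha}$; everything else is a one-line manipulation.
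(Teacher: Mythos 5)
Your proof is correct and follows essentially the same route as the paper: decompose $x_l(t)=\ee^{-\lambda_l t}x_l(0)+\int_0^t\beta_l(s)\ee^{-\lambda_l(t-s)}\dd w_l(s)$, bound the variance of the Gaussian convolution by a constant times $\lambda_l^{-(1+\alpha)}$ (using $\gamma_l^{-\alpha}\lesssim\lambda_l^{-\alpha}$ and boundedness of $\sigma$), use $1-\ee^{-\lambda_l\Delta_n}\le\lambda_l\Delta_n$ for (1), and apply the Cauchy--Schwarz inequality with the weight $\lambda_l^{(1+\alpha)/2}$ for (2), treating the two branches of [A1]-(i) separately, exactly as the paper does. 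One wording nit: the displayed claim ``$\sum_{l}\lambda_l^{1+\alpha}\EE[x_l(0)^2]<\infty$ under [A1]-(i)'' is only valid in case b) (case a) gives merely a uniform-in-$l$ bound), but your parenthetical observation that the relevant sum $\sum_l\lambda_l^{1+\alpha}\EE[x_l(0)]^2$ vanishes in case a) already repairs this, so the argument is complete.
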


\begin{proof}
Let
$Z_l(t) =  \int_0^t \beta_l(s) \ee^{-\lambda_l(t -s)} \dd w_l(s)$.
One has $x_l(t) =  \ee^{-\lambda t} x_l(0) +Z_l(t).$ 
\begin{enumerate}
\item[(1)] 
Since it holds that
\begin{align*}
\EE[x_l(t)] &= \ee^{-\lambda_l t} \EE[x_l(0)],
\\
\EE[x_l(t)^2] &= \ee^{-2\lambda_l t} \EE[x_l(0)^2] + \EE[Z_l(t)^2],
\\
\EE[x_l(t)^3] &= \ee^{-3\lambda_l t} \EE[x_l(0)^3] 
+ 3 \ee^{-\lambda_l t} \EE[x_l(0)] \EE[Z_l(t)^2],
\\
\EE[x_l(t)^4] &= \ee^{-4\lambda_l t} \EE[x_l(0)^4] 
+ 6 \ee^{-2\lambda_l t} \EE[x_l(0)^2] \EE[Z_l(t)^2] + \EE[Z_l(t)^4]
\end{align*}
and 
\begin{equation*}
Z_l(t) \sim 
\mathrm{N} \biggl(0, \int_0^t \beta_l(s)^2 \ee^{-2\lambda_l (t-s)} \dd s \biggr),
\quad
\int_0^t \beta_l(s)^2 \ee^{-2\lambda_l (t-s)} \dd s 
\lesssim \frac{1-\ee^{-2\lambda_l t}}{\lambda_l^{1+\alpha}},
\end{equation*}
we have $\sup_{t \in [0,1]} \EE[x_l(t)^k] \lesssim 1$ under $\EE[x_l(0)^4] < \infty$,
which together with $1 -\ee^{-\lambda_l \Delta_n} \lesssim \lambda_l \Delta_n$
yields the desired result.

\item[(2)] 
It is obvious that the inequality is valid 
since $\EE[ \eta_{i,l} ] = 0$ under [A1]-(i)-a). 
It follows from the Schwartz inequality that under [A1]-(i)-b),
\begin{align*}
\biggl( \sum_{l \in \mathbb N^d} \EE[ \eta_{i,l} ] g_l(y) \biggr)^2
&= \EE \biggl[ \sum_{l \in \mathbb N^d} 
\lambda_l^{(1+\alpha)/2} x_l(0) \times  
\frac{\ee^{-\lambda_l \Delta_n} -1}{\lambda_l^{(1+\alpha)/2}} 
\ee^{-\lambda_l (i-1) \Delta_n} g_l(y)
\biggr]^2
\\
&\le \EE \biggl[ \sum_{l \in \mathbb N^d} \lambda_l^{1+\alpha} x_l(0)^2 \biggr]
\sum_{l \in \mathbb N^d} 
\frac{(1 -\ee^{-\lambda_l \Delta_n})^2}{\lambda_l^{1+\alpha}} g_l(y)^2 
\\
&= \EE \bigl[ \| A_\theta^{(1+\alpha)/2} X_0 \|^2 \bigr]
\sum_{l \in \mathbb N^d} 
\frac{(1 -\ee^{-\lambda_l \Delta_n})^2}{\lambda_l^{1+\alpha}} g_l(y)^2 
\\
&\lesssim 
\sum_{l \in \mathbb N^d} 
\frac{(1 -\ee^{-\lambda_l \Delta_n})^2}{\lambda_l^{1+\alpha}} g_l(y)^2.
\end{align*}

\item[(3)] 
Since $\EE[x_l(0)^2] \lesssim \lambda_l^{-(1+\alpha)}$ under [A1]-(i), 
it holds that
\begin{align*}
\sum_{l \in \mathbb N^d} \EE[ \eta_{i,l}^2 ] g_l(y)^2
&\le 
\sum_{l \in \mathbb N^d} 
(1 -\ee^{-\lambda_l \Delta_n})^2 
\bigl( \EE[x_l(0)^2] + \EE[Z_l(t_{i-1}^n)^2] \bigr) g_l(y)^2
\\
&\lesssim 
\sum_{l \in \mathbb N^d} 
\frac{(1 -\ee^{-\lambda_l \Delta_n})^2}{\lambda_l^{1+\alpha}} g_l(y)^2.
\end{align*}
\end{enumerate}
\end{proof}

\begin{lem}\label{lem5}
Let $\{ g_l \}_{l \in \mathbb N^d}$ be measurable functions on $D$.
\begin{enumerate}
\item[(1)]
For a fixed $l \in \mathbb N^d$, it follows that
\begin{equation*}
\EE[(\Delta_i^n x_l)^k] \lesssim 
\begin{cases}
\Delta_n, & k \in \{ 1, 2 \}, 
\\
\Delta_n^2, & k \in \{ 3, 4 \}  
\end{cases}
\end{equation*}
uniformly in $i$ under $\EE[x_l(0)^4] < \infty$.

\item[(2)]
Under [A1]-(i), it follows that 
\begin{equation*}
\sum_{l \in \mathbb N^d} \EE[\Delta_i^n x_l] g_l(y)
\lesssim 
\biggl(
\sum_{l \in \mathbb N^d} \frac{(1 -\ee^{-\lambda_l \Delta_n})^2}
{\lambda_l^{1+\alpha}} g_l(y)^2
\biggr)^{1/2}.
\end{equation*}

\item[(3)]
Under [A1]-(i), it follows that 
\begin{equation*}
\sum_{l \in \mathbb N^d} \EE[(\Delta_i^n x_l)^2] g_l(y)^2
\lesssim 
\sum_{l \in \mathbb N^d} \frac{1 -\ee^{-\lambda_l \Delta_n}}
{\lambda_l^{1+\alpha}} g_l(y)^2.
\end{equation*}
\end{enumerate}
\end{lem}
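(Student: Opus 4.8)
The plan is to reuse the decomposition $\Delta_i^n x_l = \zeta_{i,l} + \eta_{i,l}$ introduced in the proof of Proposition \ref{prop1} and combine it with the moment bounds for $\eta_{i,l}$ from Lemma \ref{lem4}. Recall that $\zeta_{i,l} = \int_{t_{i-1}^n}^{t_i^n} \beta_l(s) \ee^{-\lambda_l(t_i^n-s)} \dd w_l(s)$ is a centered Gaussian random variable, independent of $\eta_{i,l}$ (which is measurable with respect to $\sigma(x_l(0), \{ w_l(s) \}_{s \le t_{i-1}^n})$), with
\begin{equation*}
\EE[\zeta_{i,l}^2] = \int_{t_{i-1}^n}^{t_i^n} \beta_l(s)^2 \ee^{-2\lambda_l(t_i^n-s)} \dd s
\lesssim \frac{1-\ee^{-2\lambda_l \Delta_n}}{\lambda_l^{1+\alpha}},
\end{equation*}
where we use $\beta_l(s)^2 = \sigma(s)^2 \gamma_l^{-\alpha} \lesssim \lambda_l^{-\alpha}$ uniformly, since $\sigma$ is bounded under both $H_0$ and $H_1$ and $\gamma_l \sim |l|_2^2 \sim \lambda_l$. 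For fixed $l$ this gives $\EE[\zeta_{i,l}^2] \lesssim \Delta_n$, and Gaussianity gives $\EE[\zeta_{i,l}^{2k}] = (2k-1)!!\,(\EE[\zeta_{i,l}^2])^k$ and $\EE[\zeta_{i,l}^{2k-1}] = 0$.

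For part (1), I would expand $(\zeta_{i,l}+\eta_{i,l})^k$ and take expectations: every monomial with an odd power of $\zeta_{i,l}$ vanishes by independence together with the vanishing odd moments of $\zeta_{i,l}$, and the surviving terms are controlled by the variance bound above and Lemma \ref{lem4}(1). Concretely, $\EE[\Delta_i^n x_l] = \EE[\eta_{i,l}] \lesssim \Delta_n$; $\EE[(\Delta_i^n x_l)^2] = \EE[\zeta_{i,l}^2] + \EE[\eta_{i,l}^2] \lesssim \Delta_n + \Delta_n^2$; $\EE[(\Delta_i^n x_l)^3] = 3\EE[\zeta_{i,l}^2]\EE[\eta_{i,l}] + \EE[\eta_{i,l}^3] \lesssim \Delta_n^2 + \Delta_n^3$; and $\EE[(\Delta_i^n x_l)^4] = \EE[\zeta_{i,l}^4] + 6\EE[\zeta_{i,l}^2]\EE[\eta_{i,l}^2] + \EE[\eta_{i,l}^4] \lesssim \Delta_n^2 + \Delta_n^3 + \Delta_n^4$, all uniformly in $i$, which yields the claimed orders $\Delta_n$ for $k \in \{1,2\}$ and $\Delta_n^2$ for $k \in \{3,4\}$.

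Parts (2) and (3) follow almost immediately. Since $\zeta_{i,l}$ is centered, $\EE[\Delta_i^n x_l] = \EE[\eta_{i,l}]$, so part (2) is exactly Lemma \ref{lem4}(2). For part (3) the cross term $\EE[\zeta_{i,l}\eta_{i,l}] = \EE[\zeta_{i,l}]\EE[\eta_{i,l}] = 0$, so $\EE[(\Delta_i^n x_l)^2] = \EE[\zeta_{i,l}^2] + \EE[\eta_{i,l}^2]$; I bound $\EE[\zeta_{i,l}^2] \lesssim (1-\ee^{-2\lambda_l\Delta_n})/\lambda_l^{1+\alpha} \le 2(1-\ee^{-\lambda_l\Delta_n})/\lambda_l^{1+\alpha}$ via $1-\ee^{-2x} = (1-\ee^{-x})(1+\ee^{-x})$, and $\EE[\eta_{i,l}^2] \lesssim (1-\ee^{-\lambda_l\Delta_n})^2/\lambda_l^{1+\alpha} \le (1-\ee^{-\lambda_l\Delta_n})/\lambda_l^{1+\alpha}$ exactly as in the proof of Lemma \ref{lem4}(3) (using $\EE[x_l(t_{i-1}^n)^2] \lesssim \lambda_l^{-(1+\alpha)}$ under [A1]-(i)), then multiply by $g_l(y)^2$ and sum over $l$. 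The argument is routine; the only places needing a little care are the uniform-in-$l$ bound $\beta_l(s)^2 \lesssim \lambda_l^{-\alpha}$ and keeping track of which cross terms survive, so I do not anticipate a genuine obstacle here.
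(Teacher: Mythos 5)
Your proposal is correct and follows essentially the same route as the paper: the decomposition $\Delta_i^n x_l = \zeta_{i,l} + \eta_{i,l}$ with $\zeta_{i,l}$ centered Gaussian of variance $\lesssim (1-\ee^{-2\lambda_l\Delta_n})/\lambda_l^{1+\alpha}$, the independence-based moment expansions for $k\in\{1,\dots,4\}$, and the reduction of parts (2) and (3) to Lemma \ref{lem4} together with $\EE[\zeta_{i,l}^2]\lesssim(1-\ee^{-\lambda_l\Delta_n})/\lambda_l^{1+\alpha}$. The paper's proof is the same argument written slightly more tersely, so no changes are needed.
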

\begin{proof}
Note that $\Delta_i^n x_l = \zeta_{i,l} +\eta_{i,l}$
and $\zeta_{i,l} = \int_{t_{i-1}^n}^{t_i^n} 
\beta_l(s) \ee^{-\lambda_l(t_i^n-s)} \dd w_l(s)$.
\begin{enumerate}
\item[(1)] 
Since it holds that
\begin{align*}
\EE[\Delta_i^n x_l] &= \EE[\eta_{i,l}],
\\
\EE[(\Delta_i^n x_l)^2] &= \EE[\eta_{i,l}^2] +\EE[\zeta_{i,l}^2],
\\
\EE[(\Delta_i^n x_l)^3] &= \EE[\eta_{i,l}^3] +3 \EE[\eta_{i,l}] \EE[\zeta_{i,l}^2],
\\
\EE[(\Delta_i^n x_l)^4] &= \EE[\eta_{i,l}^4] 
+6 \EE[\eta_{i,l}^2] \EE[\zeta_{i,l}^2] +\EE[\zeta_{i,l}^4]
\end{align*}
and
\begin{equation*}
\zeta_{i,l} \sim 
\mathrm{N} \biggl(0, \int_{t_{i-1}^n}^{t_i^n} \beta_l(s)^2 
\ee^{-2\lambda_l (t_i^n -s)} \dd s \biggr),
\quad
\int_{t_{i-1}^n}^{t_i^n} \beta_l(s)^2 \ee^{-2\lambda_l (t_i^n -s)} \dd s
\lesssim \frac{1-\ee^{-2\lambda_l \Delta_n}}{\lambda_l^{1+\alpha}},
\end{equation*}
we obtain the desire results under $\EE[x_l(0)^4] < \infty$ 
from Lemma \ref{lem4} and 
$\frac{1 -\ee^{-\lambda_l \Delta_n}}{\lambda_l} \lesssim \Delta_n$.

\item[(2)]
It can be seen from $\EE[\Delta_i^n x_l] = \EE[\eta_{i,l}]$ and Lemma \ref{lem4}.

\item[(3)]
By $\EE[\zeta_{i,l}^2] \lesssim \frac{1-\ee^{-\lambda_l \Delta_n}}{\lambda_l^{1+\alpha}}$
and Lemma \ref{lem4},
we have the result.

\end{enumerate}
\end{proof}

For $\alpha \in [0, \infty) \cap (d/2-1, \infty)$ and $q \in [0,\infty)$, we define
\begin{align*}
\mathcal S_{1,n}(\alpha, q) =
\sum_{l \in \mathbb N^d} \frac{(1-\ee^{-\lambda_l \Delta_n})^q}{\lambda_l^{1+\alpha}},
\quad
\mathcal S_{2,M}(\alpha) 
= \max_{j \in \mathbb M_d} \sup_{y,z \in D_j}
\sum_{l \in \mathbb N^d} \frac{(e_l(y) -e_l(z))^2}{\lambda_l^{1+\alpha}}.
\end{align*}
\begin{lem}\label{lem6}
For $\alpha \in [0, \infty) \cap (d/2-1, \infty)$ and $q \in [0,\infty)$, we have
\begin{equation*}
\mathcal S_{1,n}(\alpha, q) 
= \OO (\Delta_n^{(d/2 -(1+\alpha)) \tand q} ),
\quad
\mathcal S_{2,M}(\alpha) 
= \OO \biggl( \frac{1}{M_{(1)}^{(2(1+\alpha) -d) \tand 2}} \biggr).
\end{equation*}
\end{lem}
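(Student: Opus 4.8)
The plan is to reduce both bounds to integral comparison for sums over $l \in \mathbb N^d$, resting on two elementary facts. First, $\lambda_l \sim |l|_2^2$ uniformly in $l$, which is immediate from $\lambda_l = \theta_2 \pi^2 |l|_2^2 + |\theta_1|_2^2/(4\theta_2) - \theta_0$ together with the standing assumption $\lambda_{1_d} > 0$: for the finitely many small indices this is just positivity, and for large $|l|_2$ the quadratic term dominates. Second, the lattice-point estimate that for $R \ge 1$ and $s \in \mathbb R$ one has $\sum_{|l|_2 \le R} |l|_2^s \sim R^{s+d}$ if $s+d > 0$, $\sim \log R$ if $s+d = 0$, and $\sim 1$ if $s+d < 0$, while $\sum_{|l|_2 > R} |l|_2^s \sim R^{s+d}$ whenever $s+d < 0$. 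Granting these, each bound follows by splitting the $l$-sum at a well-chosen frequency threshold and balancing the two resulting pieces.

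For $\mathcal S_{1,n}(\alpha,q)$ I would use $1 - \ee^{-\lambda_l \Delta_n} \le \min(1, \lambda_l \Delta_n) \lesssim \min(1, |l|_2^2 \Delta_n)$ and split at $|l|_2 \sim \Delta_n^{-1/2}$. On the low-frequency block, $(1 - \ee^{-\lambda_l \Delta_n})^q \lesssim (|l|_2^2 \Delta_n)^q$ reduces matters to $\Delta_n^q \sum_{|l|_2 \lesssim \Delta_n^{-1/2}} |l|_2^{2(q - 1 - \alpha)}$, which by the lattice estimate is $\OO(\Delta_n^{(1+\alpha)-d/2})$ when $q > (1+\alpha) - d/2$, $\OO(\Delta_n^q)$ when $q < (1+\alpha) - d/2$, and $\OO(-\Delta_n^q \log \Delta_n)$ at equality. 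On the high-frequency block, $(1 - \ee^{-\lambda_l \Delta_n})^q \le 1$ gives $\sum_{|l|_2 > \Delta_n^{-1/2}} |l|_2^{-2(1+\alpha)} \sim \Delta_n^{(1+\alpha)-d/2}$, the convergence being exactly the hypothesis $\alpha > d/2 - 1$. Adding the two blocks yields the asserted rate, i.e.\ $\Delta_n$ to the exponent $((1+\alpha)-d/2) \tand q$.

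For $\mathcal S_{2,M}(\alpha)$ the one extra ingredient is the two-sided pointwise bound $|e_l(y) - e_l(z)| \lesssim \min\bigl(1,\, |l|_2 |y - z|_2\bigr)$, uniform over $y, z \in \overline D$ and over $\kappa$ in the given compact set: the bound by $1$ is $\| e_l \|_\infty \lesssim 1$, and the bound by $|l|_2 |y - z|_2$ follows from the mean value theorem, since differentiating $e_l(y;\kappa) = 2^{d/2} \ee^{-\kappa^\TT y/2} \prod_k \sin(\pi l_k y^{(k)})$ in any coordinate produces a factor $\lesssim 1 + |l|_2 \lesssim |l|_2$, with constants depending only on $\sup |\kappa|$. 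Since $|y^{(k)} - z^{(k)}| \le 1/M_k \le 1/M_{(1)}$ for $y, z$ in a common cell $D_j$, this gives $(e_l(y) - e_l(z))^2 \lesssim \min(1, |l|_2^2 / M_{(1)}^2)$ uniformly in $j$ and in $y, z \in D_j$, hence $\mathcal S_{2,M}(\alpha) \lesssim \sum_l |l|_2^{-2(1+\alpha)} \min(1, |l|_2^2 / M_{(1)}^2)$. Splitting at $|l|_2 \sim M_{(1)}$ and invoking the lattice estimate exactly as before gives $\OO(M_{(1)}^{-(2(1+\alpha)-d)})$ when $\alpha < d/2$, $\OO(M_{(1)}^{-2})$ when $\alpha > d/2$, and $\OO(M_{(1)}^{-2} \log M_{(1)})$ when $\alpha = d/2$, that is, $\OO\bigl(1/M_{(1)}^{(2(1+\alpha)-d) \tand 2}\bigr)$.

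There is no genuine obstacle here; both estimates are routine integral-comparison computations once $\lambda_l \sim |l|_2^2$ and the eigenfunction-difference bound are established. The points demanding the most care are the uniformity (in $\kappa$ and in $y, z$) of the bound on $|e_l(y) - e_l(z)|$, and the bookkeeping in the threshold splitting: choosing the cutoff so the low- and high-frequency contributions are comparable and correctly accounting for the three regimes — including the logarithmic boundary case — that produce the $\tand$. This is mechanical but error-prone, and I expect it to be the only part that needs to be written out with attention.
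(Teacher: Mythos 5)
Your proof is correct and takes essentially the same route as the paper's: bound $1-\ee^{-\lambda_l \Delta_n}$ by $1 \land |l|_2^2 \Delta_n$ and $(e_l(y)-e_l(z))^2$ by $1 \land |l|_2^2/M_{(1)}^2$ (uniformly over $y,z$ in a cell), split the sums at $|l|_2 \sim \Delta_n^{-1/2}$ and $|l|_2 \sim M_{(1)}$ respectively, and apply the standard lattice-sum estimates, with the three regimes yielding the $\tand$ exponent. Your exponent $((1+\alpha)-d/2) \tand q$ is the intended reading of the statement (the printed $(d/2-(1+\alpha)) \tand q$ has a sign slip, as the first argument must be positive for $\tand$ and the lemma is applied with exponent $(1+\alpha-d/2) \tand 1$, e.g.\ in [A3]), so there is no discrepancy.
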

\begin{proof}
Note that for $L \in (1,\infty)$, 
\begin{equation*}
\sum_{l \in \mathbb N^d:|l|_{2} < L} \frac{1}{|l|_{2}^\beta} = 
\begin{cases}
\OO(L^{d-\beta}), & \beta < d,
\\
\OO(\log(L)), & \beta = d,
\\
\OO(1), & \beta > d,
\end{cases}
\quad
\sum_{l \in \mathbb N^d:|l|_{2} \ge L} \frac{1}{|l|_{2}^\beta} = 
\OO(L^{d -\beta}), \quad \beta > d.
\end{equation*}
In particular, we have
\begin{align*}
\frac{1}{L^{2q}} \sum_{l \in \mathbb N^d:|l|_{2} < L} 
\frac{1}{|l|_{2}^{2(1+\alpha-q)}} 
&= L^{-2q} \times
\begin{cases}
\OO(L^{d -2(1 +\alpha -q)}), & \alpha < d/2 -1 +q,
\\
\OO(\log (L)), & \alpha = d/2 -1 +q,
\\
\OO(1), & \alpha > d/2 -1 +q
\end{cases}
\\
&= \OO \biggl(\frac{1}{L^{(d -2(1+\alpha)) \tand 2q}} \biggr).
\end{align*}
Using the relation $1-\ee^{-\lambda_l \Delta_n} \sim 1 \land |l|_{2}^2 \Delta_n$,
we have 
\begin{align*}
\mathcal S_{1,n}(\alpha, q) & \sim 
\sum_{l \in \mathbb N^d: |l|_{2} \ge \Delta_n^{-1/2}} 
\frac{1}{|l|_{2}^{2(1+\alpha)}} + \Delta_n^q 
\sum_{l \in \mathbb N^d:|l|_{2} < \Delta_n^{-1/2}} 
\frac{1}{|l|_{2}^{2(1 +\alpha -q)}}
\\
&= \OO ( \Delta_n^{(d-2(1+\alpha))/2} ) 
+ \OO ( \Delta_n^{((d -2(1+\alpha))/2) \tand q} )
\\
&= \OO (\Delta_n^{(d/2 -(1+\alpha)) \tand q} ).
\end{align*}
Since it follows that
\begin{equation*}
(e_l(y)-e_l(z))^2
\lesssim 1 \land \frac{|l|_{2}^2}{M_{(1)}^2}
\end{equation*}
uniformly in $y, z \in D_j$, $j \in \mathbb M_d$, we have
\begin{align*}
\mathcal S_{2,M}(\alpha)
&\sim \frac{1}{M_{(1)}^2}
\sum_{l \in \mathbb N^d: |l|_{2} < M_{(1)}} \frac{1}{|l|_{2}^{2\alpha}}
+\sum_{l \in \mathbb N^d: |l|_{2} \ge M_{(1)}} \frac{1}{|l|_{2}^{2(1+\alpha)}}
\\
&= \OO \biggl( \frac{1}{M_{(1)}^{(2(1+\alpha) -d) \tand 2}} \biggr)
+\OO \biggl( \frac{1}{M_{(1)}^{2(1+\alpha) -d}} \biggr)
\\
&= \OO \biggl( \frac{1}{M_{(1)}^{(2(1+\alpha) -d) \tand 2}} \biggr).
\end{align*}
\end{proof}


\newpage

\appendix

\section{Appendix: Estimation for linear parabolic SPDEs in one space dimension 
with volatility changes}\label{secA}
We consider parametric estimation for the linear parabolic SPDE 
\begin{equation*}
\dd X_t(y) = 
\biggl( \theta_2 \frac{\pd^2}{\pd y^2} +\theta_1 \frac{\pd}{\pd y} 
+ \theta_0 \biggr) X_t(y) \dd t 
+ \sigma(t) \dd B_t(y), 
\quad(t, y) \in [0,1] \times (0,1)
\end{equation*}
with an initial value $X_0$ and the Dirichlet boundary condition 
$X_t(0) = X_t(1) = 0$, $t \in [0,1]$, where 
$\theta_0, \theta_1 \in \mathbb R$ and $\theta_2 \in (0,\infty)$ are unknown parameters,
the volatility function $\sigma(t)$ is characterized by
\begin{equation}\label{vol}
\sigma(t) = \sum_{p = 1}^{r+1} \sigma_p \ind_{[\tau_{p-1}, \tau_{p})}(t)
\end{equation}
with $r \in \mathbb N$, 
$0 = \tau_0 < \tau_1 < \tau_2 < \cdots < \tau_r < \tau_{r+1} = 1$,
and $\sigma_p \in (0, \infty)$ for $p \in \{ 1,\ldots, r+1 \}$.
For convenience, we write $[\tau_r, \tau_{r+1}) = [\tau_r,1]$.
$\{ B_t \}_{t \ge 0}$ is the cylindrical Brownian motion 
in a Sobolev space on $(0,1)$ and is given by
\begin{equation*}
B_t = \sum_{l \in \mathbb N} w_l(t) e_l
\end{equation*}
with $e_l(y) = \sqrt{2} \exp(-\kappa y/2)\sin(\pi l y)$, 
$\kappa = \theta_1/\theta_2$ and 
independent real valued standard Brownian motions $\{ w_l \}_{l \in \mathbb N}$.

Suppose that we have discrete observations 
$\mathbf X_{M,N} = \{ X_{t_i^N}(y_j) \}_{0 \le i \le N, 0 \le j \le M}$ with
$t_i^N = i \Delta = i/N$ and $y_j = j/M$.
For $b \in (0,1/2)$, $m \in \{ 1, \ldots, M \}$ and 
$n \in \{1, \ldots, N \}$, we will write the thinned data 
obtained from $\mathbf X_{M,N}$ as 
$\mathbf X_{m,n}^{(b)} = \{ X_{t_i^n}(\widetilde y_j) \}_{0 \le i \le n, 0 \le j \le m}$
with 
\begin{equation*}
t_i^n = i \cdot \frac{1}{N} \biggl\lfloor \frac{N}{n} \biggr\rfloor,
\quad
\widetilde y_j = b + j \cdot \frac{1-2b}{m},
\end{equation*}
where $\widetilde y_j \in \{ y_0, \ldots, y_M \}$.

Let $-A_\theta = \theta_2 \frac{\pd^2}{\pd y^2} +\theta_1 \frac{\pd}{\pd y} + \theta_0$. 
While we can consider more general initial conditions such as [A1], 
for simplicity, we make the following condition.
\begin{description}
\item[{[D1]}]
The initial value $X_0 \in L^2((0,1))$ is deterministic and 
$\| A_\theta^{1/2} X_0 \| < \infty$.
\end{description}

\subsection{Estimators proposed by Bibinger and Trabs \cite{Bibinger_Trabs2020}}
Suppose that we have thinned data $\mathbf X_{m,N}^{(b)}$ 
with $b \in (0,1/2)$ and $m = \OO(N^\rho)$ for some $\rho \in (0,1/2)$.
We define
\begin{equation*}
Z_{j,N} = \frac{1}{N\sqrt{\Delta}}\sum_{i=1}^N (\Delta_i^N X(\widetilde y_j))^2
\end{equation*}
and
\begin{equation*}
V_0 = \frac{1}{\sqrt{\theta_2}} \int_0^1 \sigma^2(t) \dd t
= \frac{1}{\sqrt{\theta_2}} \sum_{p=1}^{r+1} \sigma_p^2 (\tau_{p} -\tau_{p-1}).
\end{equation*}

Bibinger and Trabs \cite{Bibinger_Trabs2020} considered linear parabolic SPDEs 
with a H\"{o}lder continuous volatility function. 
For linear parabolic SPDEs with a volatility function expressed by the step function
\eqref{vol}, we obtain the following result analogous to Proposition 6.8 
in \cite{Bibinger_Trabs2020}.
\begin{prop}\label{app_prop1}
Assume that [D1] holds. Then, it holds that for $y \in [b,1-b]$, 
\begin{equation*}
\frac{1}{N \sqrt{\Delta}} \sum_{i=1}^N \EE \bigl[ (\Delta_i^N X(y))^2 \bigr]
= \frac{V_0 \cdot \ee^{-\kappa y}}{\sqrt{\pi}} +\OO(\Delta).
\end{equation*}
\end{prop}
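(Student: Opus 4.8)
The plan is to expand $X$ in the eigenbasis $\{e_l\}_{l\in\mathbb N}$, evaluate $\EE[(\Delta_i^N X(y))^2]$ term by term, and compute the resulting sum over $l$ by a Jacobi--theta / Poisson summation asymptotic, in the spirit of Proposition~6.8 of \cite{Bibinger_Trabs2020} adapted to the piecewise-constant volatility \eqref{vol}. First, since $X_t(y)=\sum_l x_l(t)e_l(y)$ with $x_l$ as in \eqref{cor-pro} (here $d=1$, $\gamma_l^{-\alpha/2}=1$, $\beta_l=\sigma$), and the $\{w_l\}$ are independent while $X_0$ is deterministic, all cross terms vanish and $\EE[(\Delta_i^N X(y))^2]=\sum_l\EE[(\Delta_i^N x_l)^2]\,e_l(y)^2$. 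Writing $x_l(t)=\ee^{-\lambda_l t}\langle X_0,e_l\rangle+Z_l(t)$ with $Z_l(t)=\int_0^t\sigma(s)\ee^{-\lambda_l(t-s)}\dd w_l(s)$, the two parts are orthogonal, and the $X_0$-part contributes $\sum_{i=1}^N(\ee^{-\lambda_l t_i^N}-\ee^{-\lambda_l t_{i-1}^N})^2\langle X_0,e_l\rangle^2=\tanh(\lambda_l\Delta/2)(1-\ee^{-2\lambda_l})\langle X_0,e_l\rangle^2\lesssim\Delta\,\lambda_l\langle X_0,e_l\rangle^2$; weighting by $e_l(y)^2\lesssim\ee^{-\kappa y}$, summing over $l$ and multiplying by $(N\sqrt\Delta)^{-1}=\sqrt\Delta$, this is $\OO(\Delta^{3/2})$ by [D1] (which gives $\sum_l\lambda_l\langle X_0,e_l\rangle^2=\|A_\theta^{1/2}X_0\|^2<\infty$). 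So I would reduce to $X_0\equiv0$, i.e.\ replace $x_l$ by $Z_l$.

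For $X_0\equiv0$, set $V_l(t)=\EE[Z_l(t)^2]=\int_0^t\sigma(s)^2\ee^{-2\lambda_l(t-s)}\dd s$ and use $\Delta_i^N Z_l=\zeta_{i,l}+\eta_{i,l}$ with $\zeta_{i,l}\perp\eta_{i,l}$, It\^o's isometry, and the recursion $V_l(t_i^N)=\ee^{-2\lambda_l\Delta}V_l(t_{i-1}^N)+\EE[\zeta_{i,l}^2]$, to telescope $\sum_{i=1}^N\EE[(\Delta_i^N Z_l)^2]$ into the exact identity
\begin{equation*}
\sum_{i=1}^N\EE[(\Delta_i^N Z_l)^2]
=\bigl(1+\tanh(\lambda_l\Delta/2)\bigr)\int_0^1\sigma(s)^2\ee^{-2\lambda_l\delta_N(s)}\dd s-\tanh(\lambda_l\Delta/2)\,V_l(1),
\end{equation*}
where $\delta_N(s)=t_j^N-s\in[0,\Delta)$ for $s\in(t_{j-1}^N,t_j^N]$. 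Because $\sigma^2$ is a step function with $r+1$ pieces, $\int_0^1\sigma(s)^2\ee^{-2\lambda_l\delta_N(s)}\dd s=\int_0^\Delta\ee^{-2\lambda_l v}\sum_{j=1}^N\sigma(t_j^N-v)^2\dd v=(\Delta^{-1}\int_0^1\sigma^2+\OO(1))\tfrac{1-\ee^{-2\lambda_l\Delta}}{2\lambda_l}$ uniformly in $l$, and the elementary identity $(1+\tanh(\lambda_l\Delta/2))\tfrac{1-\ee^{-2\lambda_l\Delta}}{2\lambda_l}=\tfrac{1-\ee^{-\lambda_l\Delta}}{\lambda_l}$ then gives $\sum_{i}\EE[(\Delta_i^N Z_l)^2]=\Delta^{-1}(\int_0^1\sigma^2)\tfrac{1-\ee^{-\lambda_l\Delta}}{\lambda_l}+\OO(\tfrac{1-\ee^{-\lambda_l\Delta}}{\lambda_l})-\tanh(\lambda_l\Delta/2)V_l(1)$. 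Weighting by $e_l(y)^2$, summing over $l$ and multiplying by $\sqrt\Delta$, the last two terms are $\OO(\Delta)$ (use $\tanh(\lambda_l\Delta/2)\lesssim\min(\lambda_l\Delta,1)$, $V_l(1)\lesssim\lambda_l^{-1}$ and $e_l(y)^2\lesssim\ee^{-\kappa y}$ to see that the $l$-sums are $\OO(\sqrt\Delta)$), so the whole quantity reduces to $\Delta^{-1/2}(\int_0^1\sigma^2)\sum_l e_l(y)^2\tfrac{1-\ee^{-\lambda_l\Delta}}{\lambda_l}+\OO(\Delta)$.

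The remaining point is the asymptotics of $\sum_l e_l(y)^2\tfrac{1-\ee^{-\lambda_l\Delta}}{\lambda_l}$. Writing $e_l(y)^2=2\ee^{-\kappa y}\sin^2(\pi ly)$, $\tfrac{1-\ee^{-\lambda_l\Delta}}{\lambda_l}=\int_0^\Delta\ee^{-\lambda_l s}\dd s$, and $\lambda_l=\theta_2\pi^2l^2+c_0$ with $c_0=\theta_1^2/(4\theta_2)-\theta_0$, this equals $2\ee^{-\kappa y}\int_0^\Delta\ee^{-c_0 s}\sum_{l\ge1}\sin^2(\pi ly)\ee^{-\theta_2\pi^2l^2 s}\dd s$. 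By Poisson summation, $\sum_{l\ge1}\ee^{-\theta_2\pi^2l^2 s}=\tfrac1{2\sqrt{\pi\theta_2 s}}-\tfrac12+\OO(\ee^{-c/s})$, and --- crucially for $y\in[b,1-b]$, so that $\min_{k\in\mathbb Z}(k-y)^2\ge b^2$ --- $\sum_{l\ge1}\cos(2\pi ly)\ee^{-\theta_2\pi^2l^2 s}=-\tfrac12+\OO(s^{-1/2}\ee^{-b^2/(\theta_2 s)})$; hence $\sum_{l\ge1}\sin^2(\pi ly)\ee^{-\theta_2\pi^2l^2 s}=\tfrac1{4\sqrt{\pi\theta_2 s}}+(\text{exponentially small in }s)$, and with $\int_0^\Delta\ee^{-c_0 s}s^{-1/2}\dd s=2\sqrt\Delta+\OO(\Delta^{3/2})$ I obtain $\sum_l e_l(y)^2\tfrac{1-\ee^{-\lambda_l\Delta}}{\lambda_l}=\tfrac{\ee^{-\kappa y}\sqrt\Delta}{\sqrt{\pi\theta_2}}+\OO(\Delta^{3/2})$. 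Substituting back, $\tfrac1{N\sqrt\Delta}\sum_{i=1}^N\EE[(\Delta_i^N X(y))^2]=\Delta^{-1/2}(\int_0^1\sigma^2)(\tfrac{\ee^{-\kappa y}\sqrt\Delta}{\sqrt{\pi\theta_2}}+\OO(\Delta^{3/2}))+\OO(\Delta)=\tfrac{\ee^{-\kappa y}}{\sqrt{\pi\theta_2}}\int_0^1\sigma^2+\OO(\Delta)=\tfrac{V_0\,\ee^{-\kappa y}}{\sqrt\pi}+\OO(\Delta)$, which is the assertion.

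I expect the main difficulty to be the error bookkeeping rather than any single estimate: one must check that the $\OO(1)$ Riemann-sum error coming from the piecewise-constant $\sigma$ and the $\tanh(\lambda_l\Delta/2)V_l(1)$ remainder, once weighted by $e_l(y)^2$ and summed over $l$, are only $\OO(\sqrt\Delta)$ (so that the overall prefactor $\sqrt\Delta$ produces $\OO(\Delta)$), and that the remainder in the Poisson-summation step is genuinely exponentially small in $\Delta$ for $y$ bounded away from $\{0,1\}$ --- which is precisely why the proposition is restricted to $y\in[b,1-b]$.
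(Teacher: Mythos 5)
Your computation for the stochastic-convolution part is correct, but there is one genuine flaw at the very first step: under [D1] the identity $\EE[(\Delta_i^N X(y))^2]=\sum_{l}\EE[(\Delta_i^N x_l)^2]e_l(y)^2$ is false in general, because $X_0$ is deterministic but not necessarily zero. For $l_1\neq l_2$ the stochastic parts are indeed independent and centred, but $\EE[\Delta_i^N x_{l_1}\,\Delta_i^N x_{l_2}]=A_{i,l_1}A_{i,l_2}$ with $A_{i,l}=-\ee^{-\lambda_l(i-1)\Delta}(1-\ee^{-\lambda_l\Delta})\langle X_0,e_l\rangle$, so the correct formula is $\EE[(\Delta_i^N X(y))^2]=\bigl(\sum_l A_{i,l}e_l(y)\bigr)^2+\sum_l\EE[(\Delta_i^N Z_l)^2]e_l(y)^2$; your reduction to $X_0\equiv0$ bounds only the diagonal terms $\sum_l A_{i,l}^2e_l(y)^2$ and silently drops the off-diagonal ones. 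The repair is exactly the paper's bound: by the Cauchy--Schwarz inequality, $\bigl(\sum_l A_{i,l}e_l(y)\bigr)^2\le\|A_\theta^{1/2}X_0\|^2\sum_l\lambda_l^{-1}(1-\ee^{-\lambda_l\Delta})^2\ee^{-2\lambda_l(i-1)\Delta}e_l(y)^2$, and summing the geometric series in $i$ gives $\sum_{i=1}^N\bigl(\sum_l A_{i,l}e_l(y)\bigr)^2\lesssim\sum_l\lambda_l^{-1}(1-\ee^{-\lambda_l\Delta})=\OO(\sqrt\Delta)$, hence an $\OO(\Delta)$ contribution after dividing by $N\sqrt\Delta$ (weaker than the $\OO(\Delta^{3/2})$ you claim for the diagonal alone, but still within the stated error). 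With this correction your proof is complete.

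Apart from this, your route is genuinely different from the paper's and works. The paper expands $\Delta_i^N x_l=A_{i,l}+B_{i,l}+C_{i,l}$, computes $\EE[B_{i,l}^2]+\EE[C_{i,l}^2]=v_{i-1}\lambda_l^{-1}(1-\ee^{-\lambda_l\Delta})+s_{i,l}$ with $\sum_i\sum_l|s_{i,l}|=\OO(\sqrt\Delta)$ on each constancy interval (assuming, for simplicity, that the $\tau_p$ lie on the time grid), and then quotes the spectral asymptotics $\sum_l\lambda_l^{-1}(1-\ee^{-\lambda_l\Delta})e_l(y)^2=\sqrt\Delta\,\ee^{-\kappa y}/\sqrt{\pi\theta_2}+\OO(\Delta^{3/2})$ from Bibinger and Trabs. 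You instead telescope exactly through the variance recursion $V_l(t_i^N)=\ee^{-2\lambda_l\Delta}V_l(t_{i-1}^N)+\EE[\zeta_{i,l}^2]$ --- your closed-form identity and the identity $(1+\tanh(\lambda_l\Delta/2))\tfrac{1-\ee^{-2\lambda_l\Delta}}{2\lambda_l}=\tfrac{1-\ee^{-\lambda_l\Delta}}{\lambda_l}$ check out --- which converts the piecewise-constant $\sigma$ into a single Riemann-sum error that is $\OO(1)$ uniformly in $l$ and does not require the change points to sit on the grid; and you rederive the spectral asymptotics by Poisson summation, invoking $y\in[b,1-b]$ exactly where it is needed. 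Your treatment of the time averaging is arguably cleaner and more self-contained; the paper's per-increment expansion has the advantage that the same objects ($B_{i,l}$, $C_{i,l}$ and the summable remainders) are reused later, e.g.\ in the covariance bounds of Lemma \ref{app_lem1} and in the proof of Theorem \ref{app_th1}.
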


We define the contrast function
\begin{equation*}
U_{m,N}(\kappa, V_0) = \frac{1}{m} \sum_{j=1}^m 
\biggl(
\frac{1}{N \sqrt{\Delta}} \sum_{i=1}^N (\Delta_i^N X(\widetilde y_j))^2
-\frac{V_0 \cdot \ee^{-\kappa \widetilde y_j}}{\sqrt{\pi}}
\biggr)^2.
\end{equation*}
Let $\Xi$ be a compact convex subset of $\mathbb R \times (0,\infty)$ and
we assume that
the true value $(\kappa^*, V_0^*)$ of $(\kappa, V_0)$ belongs to the interior of $\Xi$.
We define the minimum contrast estimator of $(\kappa, V_0)$ as 
\begin{equation*}
(\widehat \kappa, \widehat V_0) = 
\underset{(\kappa, V_0) \in \Xi}{\mathrm{argmin}}\, U_{m,N}(\kappa, V_0).
\end{equation*}
We then obtain the following result.
\begin{thm}\label{app_th1}
Assume that [D1] holds. Then, it holds that 
under $m = \OO(N^\rho)$ for some $\rho \in (0, 1/2)$, 
\begin{equation*}
\sqrt{m N} 
\begin{pmatrix}
\widehat \kappa -\kappa^*
\\
\widehat V_0 -V_0^* 
\end{pmatrix}
= \Op(1).
\end{equation*}
\end{thm}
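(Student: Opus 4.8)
The plan is to run the classical minimum-contrast argument, following the blueprint behind Proposition \ref{app_prop1} and the asymptotic analysis in \cite{Bibinger_Trabs2020}; the only structural difference is that the H\"older continuous volatility there is replaced by the step function \eqref{vol}, which if anything simplifies the bias bookkeeping since $\int_0^1 \sigma(t)^2\,\dd t = \sum_{p=1}^{r+1}\sigma_p^2(\tau_p-\tau_{p-1})$ and the spatial covariance structure of the SPDE is unaffected. Write $\theta=(\kappa,V_0)$, $\theta^*=(\kappa^*,V_0^*)$, and introduce the limit contrast
\begin{equation*}
U(\theta) = \frac{1}{1-2b}\int_b^{1-b}\biggl(\frac{V_0^*}{\sqrt\pi}\ee^{-\kappa^* y}-\frac{V_0}{\sqrt\pi}\ee^{-\kappa y}\biggr)^2\,\dd y.
\end{equation*}
First I would establish consistency: using Proposition \ref{app_prop1} for $\EE[Z_{j,N}]$, a variance bound $\VV[Z_{j,N}]=\OO(1/N)$, a covariance bound $\sum_{j'}|\cov(Z_{j,N},Z_{j',N})|=\OO(1/N)$ uniformly in $j$, and convergence of the Riemann sums $\tfrac1m\sum_{j=1}^m f(\widetilde y_j)$, one gets $\sup_{\theta\in\Xi}|U_{m,N}(\theta)-U(\theta)|\pto0$; since $y\mapsto V_0\ee^{-\kappa y}$ is identifiable on any interval, $U$ has $\theta^*$ as its unique minimizer on $\Xi$, and the standard argmin argument yields $\widehat\theta\pto\theta^*$.

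Next I would upgrade this to the rate through a Taylor expansion of the first-order condition $\pd_\theta U_{m,N}(\widehat\theta)=0$:
\begin{equation*}
\sqrt{mN}\,(\widehat\theta-\theta^*) = -\biggl(\int_0^1\pd_\theta^2 U_{m,N}\bigl(\theta^*+u(\widehat\theta-\theta^*)\bigr)\,\dd u\biggr)^{-1}\sqrt{mN}\,\pd_\theta U_{m,N}(\theta^*).
\end{equation*}
For the Hessian factor I would show $\pd_\theta^2 U_{m,N}(\theta)\pto\pd_\theta^2 U(\theta)$ uniformly on a neighbourhood of $\theta^*$ (the same ingredients as for consistency, now differentiated in $\theta$), and that $\pd_\theta^2 U(\theta^*)$ is positive definite: it equals, up to a positive constant, the $2\times2$ Gram matrix on $[b,1-b]$ of the two linearly independent functions $y\mapsto\ee^{-\kappa^* y}$ and $y\mapsto y\,\ee^{-\kappa^* y}$. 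Combined with $\widehat\theta\pto\theta^*$, the averaged Hessian converges in probability to $\pd_\theta^2 U(\theta^*)$, so its inverse is $\Op(1)$, and it remains only to control the score.

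The crux — and the main obstacle — is to show $\sqrt{mN}\,\pd_\theta U_{m,N}(\theta^*)=\Op(1)$. Since $\pd_\theta U_{m,N}(\theta^*)$ is a bounded-weight average $\tfrac1m\sum_{j=1}^m w_j\bigl(Z_{j,N}-\tfrac{V_0^*}{\sqrt\pi}\ee^{-\kappa^*\widetilde y_j}\bigr)$ with $w_j$ of the form $\ee^{-\kappa^*\widetilde y_j}$ or $\widetilde y_j\ee^{-\kappa^*\widetilde y_j}$, I would split the residual as $\bigl(Z_{j,N}-\EE[Z_{j,N}]\bigr)+\bigl(\EE[Z_{j,N}]-\tfrac{V_0^*}{\sqrt\pi}\ee^{-\kappa^*\widetilde y_j}\bigr)$. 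By Proposition \ref{app_prop1} the deterministic remainder is $\OO(\Delta)$, so its scaled contribution is $\OO(\sqrt{mN}\,\Delta)=\OO(\sqrt{m/N})=\OO(N^{(\rho-1)/2})\to0$ since $\rho<1/2$. For the centred part I would bound the second moment,
\begin{equation*}
\EE\biggl[\biggl(\sqrt{mN}\,\frac1m\sum_{j=1}^m w_j\bigl(Z_{j,N}-\EE[Z_{j,N}]\bigr)\biggr)^2\biggr] = \frac{N}{m}\sum_{j,j'=1}^m w_j w_{j'}\cov(Z_{j,N},Z_{j',N}),
\end{equation*}
which is $\OO(1)$ once one has $\VV[Z_{j,N}]=\OO(1/N)$ and $\sum_{j'}|\cov(Z_{j,N},Z_{j',N})|=\OO(1/N)$ uniformly in $j$. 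This last covariance estimate is the hard part: it requires quantifying the decay of the temporal correlations of the squared increments $(\Delta_i^N X(\widetilde y_j))^2$, and their spatial decorrelation — that $\cov(\Delta_i^N X(y),\Delta_i^N X(y'))$ is negligible once $|y-y'|$ exceeds the temporal scale $\sqrt\Delta$, which applies here because the thinned grid has spacing $(1-2b)/m\gg\sqrt\Delta$ under $\rho<1/2$. Both follow from the coordinate-process representation \eqref{cor-pro}, the spectral asymptotics $\lambda_l\sim|l|_2^2$, and elementary series estimates in the spirit of Lemma \ref{lem6}; this is precisely the second-moment computation underlying the central limit theory of \cite{Bibinger_Trabs2020}, and the step-function form of $\sigma$ is irrelevant to it, $\sigma$ entering only through the bounded factor $\int_{t_{i-1}^N}^{t_i^N}\sigma(s)^2\ee^{-2\lambda_l(t_i^N-s)}\,\dd s$. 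Assembling the $\Op(1)$ score, the invertible limiting Hessian, and consistency gives $\sqrt{mN}\,(\widehat\theta-\theta^*)=\Op(1)$.
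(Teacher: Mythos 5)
Your plan is correct and follows essentially the same route as the paper: consistency via uniform convergence of the contrast to a limit with unique minimizer, a mean-value expansion of the first-order condition, a positive definite limiting Hessian $2\int_b^{1-b}\pd_\nu f^\TT\pd_\nu f\,\dd y$, a bias contribution $\OO(\sqrt{mN}\,\Delta)=\OO(\sqrt{m/N})$ controlled by Proposition \ref{app_prop1}, and an $\Op(1)$ score controlled by variance/covariance bounds on the squared increments. The only part you defer — the covariance decay $\VV[Z_{j,N}]=\OO(1/N)$ and the row-summed cross-covariances, with the verification that the step-function volatility only enters through bounded factors such as $\sum_k v_{k-1}\ee^{2\lambda_l k\Delta}$ — is exactly what the paper works out in detail as Lemma \ref{app_lem1}, and your justification for why it goes through unchanged is the right one.
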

This result holds under both $H_0$ and $H_1$ of \eqref{HTP}.

\subsection{Estimators proposed by Hildebrandt and Trabs \cite{Hildebrandt_Trabs2021}}
For a sequence $\{a_n\}$, 
we write $a_n \equiv a$ if $a_n = a$ for some $a \in \mathbb R$ and all $n$.

Suppose that we have thinned data $\mathbf X_{m,N}^{(b)}$ 
with $b \in (0,1/2)$, $m = \OO(\sqrt{N})$ and $N = \OO(m^2)$.
We define
\begin{equation*}
D_{i,j} X = \Delta_i^N X(\widetilde y_j) -\Delta_i^N X(\widetilde y_{j-1}),
\end{equation*}
$\overline y_j = (\widetilde y_{j-1} +\widetilde y_j)/2$ and
\begin{equation*}
V = \int_0^1 \sigma^2(t) \dd t
= \sum_{p=1}^{r+1} \sigma_p^2 (\tau_{p} -\tau_{p-1}).
\end{equation*}
Let $\delta = (1-2b)/m$. 
For $\delta/\sqrt{\Delta} \equiv r \in (0,\infty)$, we set
\begin{equation*}
\psi_r(\theta_2) = 
\frac{2}{\sqrt{\pi \theta_2}} 
\biggl(1-\exp \Bigl(-\frac{r^2}{4 \theta_2} \Bigr) +\frac{r}{\sqrt{\theta_2}}
\int_{r/\sqrt{4\theta_2}}^\infty \ee^{-x^2} \dd x
\biggr).
\end{equation*}
We obtain the same result as Proposition 3.5 in \cite{Hildebrandt_Trabs2021} 
for linear parabolic SPDEs with a volatility function expressed by the step function
\eqref{vol}.
\begin{prop}\label{app_prop2}
Assume that [D1] holds. 
Then, it holds that for $\delta/\sqrt{\Delta} \equiv r \in (0,\infty)$, 
\begin{equation*}
\frac{1}{N \sqrt{\Delta}} \sum_{i=1}^N \EE \bigl[ (D_{i,j} X)^2 \bigr]
= V \exp(-\kappa \overline y_j) \psi_{r}(\theta_2) +\OO(\Delta).
\end{equation*}
\end{prop}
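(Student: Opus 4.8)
The plan is to adapt the argument for Proposition 3.5 in \cite{Hildebrandt_Trabs2021} to the piecewise constant volatility \eqref{vol}, isolating the one place where the time dependence of $\sigma$ matters. First I would decompose the mild solution as $X_t = Y_t + \mathcal X_t$ with $Y_t = \ee^{-tA_\theta}X_0$, which is deterministic under [D1], and $\mathcal X_t = \int_0^t \sigma(s)\ee^{-(t-s)A_\theta}\dd B_s$ the stochastic convolution, which has mean zero. Since $D_{i,j}Y$ is deterministic and $\EE[D_{i,j}\mathcal X]=0$, the cross term drops out and $\EE[(D_{i,j}X)^2] = (D_{i,j}Y)^2 + \EE[(D_{i,j}\mathcal X)^2]$, so the two pieces can be handled separately.

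For the initial-value part I would expand in the eigenbasis, $D_{i,j}Y = \sum_l \ee^{-\lambda_l t_{i-1}^N}(\ee^{-\lambda_l \Delta}-1)\langle X_0, e_l\rangle (e_l(\widetilde y_j)-e_l(\widetilde y_{j-1}))$, apply Cauchy--Schwarz splitting off $\lambda_l^{1/2}\langle X_0,e_l\rangle$, use $\sum_{i=1}^N \ee^{-2\lambda_l t_{i-1}^N}\lesssim (1-\ee^{-2\lambda_l\Delta})^{-1}$ and $(1-\ee^{-\lambda_l\Delta})^2/(1-\ee^{-2\lambda_l\Delta})\le 1-\ee^{-\lambda_l\Delta}$ to get $\sum_{i=1}^N (D_{i,j}Y)^2 \lesssim \|A_\theta^{1/2}X_0\|^2 \sum_l \frac{1-\ee^{-\lambda_l\Delta}}{\lambda_l}(e_l(\widetilde y_j)-e_l(\widetilde y_{j-1}))^2$. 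Since $\delta/\sqrt\Delta\equiv r$, the $l$-sum divided by $\sqrt\Delta$ is $\OO(1)$ (this is exactly the spectral asymptotics behind Proposition 3.5 of \cite{Hildebrandt_Trabs2021}; cf.\ also Lemma \ref{lem6}), so $\frac{1}{N\sqrt\Delta}\sum_{i=1}^N (D_{i,j}Y)^2 = \OO(1/N) = \OO(\Delta)$. This step is verbatim the one in \cite{Hildebrandt_Trabs2021} because $\sigma$ does not appear.

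For the stochastic convolution I would write $\mathcal X_t = \sum_l \xi_l(t) e_l$ with $\xi_l(t) = \int_0^t \sigma(s)\ee^{-\lambda_l(t-s)}\dd w_l(s)$ independent across $l$; independence and the It\^o isometry give $\EE[(D_{i,j}\mathcal X)^2] = \sum_l \EE[(\Delta_i^N \xi_l)^2](e_l(\widetilde y_j)-e_l(\widetilde y_{j-1}))^2$ and $\EE[(\Delta_i^N\xi_l)^2] = \int_0^1 \sigma(s)^2 K_{i,l}(s)^2\dd s$, where $K_{i,l}(s) = \ee^{-\lambda_l(t_i^N-s)}\ind_{(t_{i-1}^N,t_i^N]}(s) + (\ee^{-\lambda_l\Delta}-1)\ee^{-\lambda_l(t_{i-1}^N-s)}\ind_{(0,t_{i-1}^N]}(s)$. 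Summing over $i$, $\sum_{i=1}^N\EE[(\Delta_i^N\xi_l)^2] = \int_0^1\sigma(s)^2 G_l(s)\dd s$ with $G_l(s) = \sum_{i:\,t_i^N>s}K_{i,l}(s)^2\ge 0$; a geometric-series computation shows that $G_l$ depends on $s$ only through the residual $t_{i_0(s)}^N - s\in[0,\Delta)$ (hence is $\Delta$-periodic) up to a tail whose integral is $\OO((1-\ee^{-\lambda_l\Delta})/\lambda_l)$, and a single window carries $\int G_l = (1-\ee^{-\lambda_l\Delta})/\lambda_l$. Splitting $[0,1]$ into the $r+1$ intervals on which $\sigma\equiv\sigma_p$ and using that each such interval contains $N(\tau_p-\tau_{p-1})+\OO(1)$ full windows while at most $r$ windows straddle a change point, one obtains $\sum_{i=1}^N\EE[(\Delta_i^N\xi_l)^2] = V\int_0^1 G_l(s)\dd s + \OO\!\big((1-\ee^{-\lambda_l\Delta})/\lambda_l\big)$ with $V=\sum_{p=1}^{r+1}\sigma_p^2(\tau_p-\tau_{p-1})$, where $\int_0^1 G_l = \sum_{i=1}^N\EE[(\Delta_i^N\xi_l)^2]$ for unit volatility.

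Assembling the pieces, $\frac{1}{N\sqrt\Delta}\sum_{i=1}^N\EE[(D_{i,j}X)^2] - V\cdot\frac{1}{N\sqrt\Delta}\sum_{i=1}^N\EE[(D_{i,j}X^{(1)})^2]$, with $X^{(1)}$ the solution at unit volatility, equals $(1-V)\cdot\frac{1}{N\sqrt\Delta}\sum_{i=1}^N (D_{i,j}Y)^2$ plus $\frac{1}{N}\cdot\frac{1}{\sqrt\Delta}\sum_l\OO\!\big((1-\ee^{-\lambda_l\Delta})/\lambda_l\big)(e_l(\widetilde y_j)-e_l(\widetilde y_{j-1}))^2$, both of which are $\OO(\Delta)$ by the spectral bound from the second paragraph. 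Since $\frac{1}{N\sqrt\Delta}\sum_{i=1}^N\EE[(D_{i,j}X^{(1)})^2] = \exp(-\kappa\overline y_j)\psi_r(\theta_2)+\OO(\Delta)$ by Proposition 3.5 of \cite{Hildebrandt_Trabs2021}, the claimed identity follows. I expect the main obstacle to be exactly the error control in the stochastic-convolution step: if one split $\Delta_i^N\xi_l$ into its martingale increment and the transient term $(\ee^{-\lambda_l\Delta}-1)\xi_l(t_{i-1}^N)$ and estimated them separately, the transient term would contribute at leading order (it is part of $\psi_r$, not an error) and a crude bound of the step-function mismatch would only be $\OO(1)$ after the $l$-sum; keeping the increment intact inside the temporal kernel $G_l$, so that each ``bad'' window contributes only $\OO((1-\ee^{-\lambda_l\Delta})/\lambda_l)$ rather than $\OO(\Delta)$, is what preserves enough spectral decay to end up with $\OO(\Delta)$. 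Everything else reduces to the corresponding steps in \cite{Hildebrandt_Trabs2021}.
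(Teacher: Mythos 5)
Your proposal is correct and follows essentially the same route as the paper: split off the deterministic initial-value part via Cauchy--Schwarz and the bound $\sum_l \frac{1-\ee^{-\lambda_l\Delta}}{\lambda_l}(\delta_j e_l)^2 = \OO(\sqrt{\Delta})$, identify the noise contribution per mode as the local value $\sigma(t_{i-1})^2\,\frac{1-\ee^{-\lambda_l\Delta}}{\lambda_l}$ plus remainders whose sum over $i$ is $\OO\bigl(\frac{1-\ee^{-\lambda_l\Delta}}{\lambda_l}\bigr)$, and then import the spectral asymptotics from the proof of Proposition 3.5 in \cite{Hildebrandt_Trabs2021}. The only difference is organizational: you aggregate the temporal kernel via Fubini and exploit its near-$\Delta$-periodicity (and cite Proposition 3.5 as a black box for the unit-volatility benchmark), whereas the paper computes $\EE[B_{i,l}^2]+\EE[C_{i,l}^2]$ increment by increment with an explicit remainder $s_{i,l}$ bounded blockwise, which amounts to the same calculation.
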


Let $\widetilde D_{i,j} X = D_{i,j} X +D_{i+1,j} X$ and $\nu = (\kappa, \theta_2, V)$.
For $\delta/\sqrt{\Delta} \equiv r \in (0,\infty)$, we define the contrast function
\begin{align*}
K_{m,N}(\nu) &= 
\frac{1}{m} \sum_{j=1}^m 
\biggl(
\frac{1}{N \sqrt{\Delta}}
\sum_{i=1}^N (D_{i,j} X)^2 -f_{r}(\overline y_j;\nu)
\biggr)^2
\\
&\qquad+\frac{1}{m} \sum_{j=1}^m 
\biggl(
\frac{1}{N \sqrt{2\Delta}}
\sum_{i=1}^{N-1} (\widetilde D_{i,j} X)^2 -f_{r/\sqrt{2}}(\overline y_j;\nu)
\biggr)^2,
\end{align*}
where $f_{r}(y;\nu) = V \exp(-\kappa y) \psi_{r}(\theta_2)$. 
Let $\Xi$ be a compact convex subset of $\mathbb R \times (0,\infty)^2$ and
we assume that
the true value $\nu^*$ of $\nu$ belongs to the interior of $\Xi$.
We define the minimum contrast estimator of $\nu$ as
\begin{equation*}
\widehat \nu = \underset{\nu \in \Xi}{\mathrm{argmin}}\,K_{m,N}(\nu).
\end{equation*}
We then get the following theorem.
\begin{thm}\label{app_th2}
Assume that [D1] holds. Then, it holds that
\begin{equation*}
\sqrt{m N} (\widehat \nu -\nu^*)= \Op(1).
\end{equation*}
\end{thm}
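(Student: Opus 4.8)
\textbf{Proof proposal for Theorem \ref{app_th2}.}
The plan is to run the standard minimum-contrast argument, following the route of Hildebrandt and Trabs \cite{Hildebrandt_Trabs2021}; the only structural difference is that the volatility is the step function \eqref{vol} instead of H\"older continuous, but since only $V=\int_0^1\sigma^2(t)\,\dd t$ enters the limit and the finitely many jump points are absorbed into the $\OO(\Delta)$ term of Proposition \ref{app_prop2}, this is a harmless simplification. Write $Y_{j,N}^{(1)}=\frac{1}{N\sqrt\Delta}\sum_{i=1}^N(D_{i,j}X)^2$, $Y_{j,N}^{(2)}=\frac{1}{N\sqrt{2\Delta}}\sum_{i=1}^{N-1}(\widetilde D_{i,j}X)^2$, $r_1=r$, $r_2=r/\sqrt2$, so that $K_{m,N}(\nu)=\frac1m\sum_{j=1}^m\sum_{k=1}^2\bigl(Y_{j,N}^{(k)}-f_{r_k}(\overline y_j;\nu)\bigr)^2$. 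First I would prove consistency $\widehat\nu\pto\nu^*$: a uniform law of large numbers over the compact $\Xi$, using Proposition \ref{app_prop2} for the means and the variance estimates of the last paragraph for the fluctuations, gives
\[
\sup_{\nu\in\Xi}\bigl|K_{m,N}(\nu)-K(\nu)\bigr|\pto 0,\qquad K(\nu)=\int_b^{1-b}\sum_{k=1}^{2}\bigl(f_{r_k}(y;\nu^*)-f_{r_k}(y;\nu)\bigr)^2\,\dd y .
\]
The map $\nu\mapsto(f_r(\cdot;\nu),f_{r/\sqrt2}(\cdot;\nu))$ is injective on $\Xi$: the spatial decay identifies $\kappa$, the two amplitudes identify $V\psi_r(\theta_2)$ and $V\psi_{r/\sqrt2}(\theta_2)$, and the ratio $\psi_r(\theta_2)/\psi_{r/\sqrt2}(\theta_2)$ is strictly monotone in $\theta_2$; hence $K$ has the unique minimizer $\nu^*$ and the usual argcmin argument yields $\widehat\nu\pto\nu^*$.

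Next I would localize. On the event $\{\widehat\nu\in\mathrm{int}\,\Xi\}$, whose probability tends to $1$, $\pd_\nu K_{m,N}(\widehat\nu)=0$, so a Taylor expansion gives
\[
\sqrt{mN}\,(\widehat\nu-\nu^*)=-\bigl[\pd_\nu^2 K_{m,N}(\bar\nu)\bigr]^{-1}\sqrt{mN}\,\pd_\nu K_{m,N}(\nu^*)
\]
for some $\bar\nu$ on the segment between $\widehat\nu$ and $\nu^*$. It then suffices to show (a) $\sqrt{mN}\,\pd_\nu K_{m,N}(\nu^*)=\Op(1)$ and (b) $\pd_\nu^2 K_{m,N}(\bar\nu)\pto\mathcal I(\nu^*)$ for a deterministic positive-definite matrix $\mathcal I(\nu^*)$. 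For (b), the consistency $\bar\nu\pto\nu^*$, continuity of the entries in $\nu$, and the uniform LLN reduce the claim to the positive-definiteness of $\mathcal I(\nu^*)=2\int_b^{1-b}\sum_{k=1}^2\pd_\nu f_{r_k}(y;\nu^*)\pd_\nu f_{r_k}(y;\nu^*)^\TT\,\dd y$, which follows from the same identifiability, i.e. from the linear independence over $y\in[b,1-b]$ and $k\in\{1,2\}$ of the functions $y\mapsto -y f_{r_k}(y;\nu^*)$, $f_{r_k}(y;\nu^*)/V$ and $\ee^{-\kappa^* y}\psi_{r_k}'(\theta_2^*)$; consequently $[\pd_\nu^2 K_{m,N}(\bar\nu)]^{-1}=\Op(1)$.

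The main work is (a). Here $\pd_\nu K_{m,N}(\nu^*)=-\frac2m\sum_{j=1}^m\sum_{k=1}^2\bigl(Y_{j,N}^{(k)}-f_{r_k}(\overline y_j;\nu^*)\bigr)\pd_\nu f_{r_k}(\overline y_j;\nu^*)$, and I would split $Y_{j,N}^{(k)}-f_{r_k}(\overline y_j;\nu^*)=\bigl(Y_{j,N}^{(k)}-\EE[Y_{j,N}^{(k)}]\bigr)+\bigl(\EE[Y_{j,N}^{(k)}]-f_{r_k}(\overline y_j;\nu^*)\bigr)$. The bias term is $\OO(\Delta)$ uniformly in $j$ by Proposition \ref{app_prop2}, and $\sqrt{mN}\cdot\OO(\Delta)=\OO(\sqrt{m/N})=\oo(1)$ because $m=\OO(\sqrt N)$. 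For the centered part, with bounded weights $w_j=\pd_\nu f_{r_k}(\overline y_j;\nu^*)$ it is enough to bound the second moment of $\frac1m\sum_{j}w_j\bigl(Y_{j,N}^{(k)}-\EE[Y_{j,N}^{(k)}]\bigr)$ by $\frac1{m^2}\sum_{j,j'}|w_jw_{j'}|\,\bigl|\cov(Y_{j,N}^{(k)},Y_{j',N}^{(k)})\bigr|$ and to establish $\bigl|\cov(Y_{j,N}^{(k)},Y_{j',N}^{(k)})\bigr|\lesssim N^{-1}\rho(|j-j'|)$ with a summable weight $\rho$; summing then gives $\OO(1/(mN))$, so the centered part of $\pd_\nu K_{m,N}(\nu^*)$ is $\Op((mN)^{-1/2})$ and (a) follows.

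I expect the covariance bound $\bigl|\cov(Y_{j,N}^{(k)},Y_{j',N}^{(k)})\bigr|\lesssim N^{-1}\rho(|j-j'|)$ to be the main obstacle. One expands $D_{i,j}X$ and $\widetilde D_{i,j}X$ into the Ornstein--Uhlenbeck coordinates $x_l$ weighted by the spatial-increment factors $e_l(\widetilde y_j)-e_l(\widetilde y_{j-1})$, uses Gaussianity (Isserlis' formula) to reduce the fourth moments to products of the covariances $\EE[\Delta_i^N x_{l_1}\,\Delta_{i'}^N x_{l_2}]$, and then tracks (i) the temporal exponential decay, which makes the $N$-fold time sum contribute only $\OO(N\Delta)=\OO(1)$ after the normalization $1/(N^2\Delta)$, producing the factor $1/N$, and (ii) the spatial cancellation in $\sum_l\bigl(e_l(\widetilde y_j)-e_l(\widetilde y_{j-1})\bigr)\bigl(e_l(\widetilde y_{j'})-e_l(\widetilde y_{j'-1})\bigr)$, which decays in $|j-j'|$ and yields the summable $\rho$. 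This is exactly the estimate carried out in Section 6 of \cite{Hildebrandt_Trabs2021}; with the step-function volatility, $\sigma(s)^2$ is bounded and piecewise constant, so $\int_s^t\sigma(u)^2\ee^{-2\lambda_l(t-u)}\,\dd u$ still obeys the bound $\lesssim(1-\ee^{-2\lambda_l\Delta})/\lambda_l$ used there (here $d=1$ and the noise is cylindrical), and the same computations go through up to the harmless localization of the finite jump set. Combining (a) and (b) in the Taylor identity completes the proof.
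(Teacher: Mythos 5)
Your proposal is correct and follows essentially the same route as the paper: the paper proves Theorem \ref{app_th2} by the same minimum-contrast/M-estimation argument as Theorem \ref{app_th1}, using Proposition \ref{app_prop2} for the $\OO(\Delta)$ bias (killed by $\sqrt{mN}\,\Delta=\OO(\sqrt{m/N})\to 0$) and covariance bounds for the centered fluctuations, together with uniform convergence of the contrast and positive definiteness of the limiting Hessian. The covariance estimate you defer to Section 6 of \cite{Hildebrandt_Trabs2021} is exactly the content of the paper's Lemma \ref{app_lem2}, where the step-function volatility is handled explicitly via the piecewise-constant sums in the $F$, $G$, $H$ quantities, confirming your claim that the finitely many jumps are harmless.
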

This result holds under both $H_0$ and $H_1$ of \eqref{HTP}.

\subsection{Proofs}
For simplicity, we assume that for any $j \in \{ 1, \ldots, r \}$, 
there exists $N_j \in \{ 1, \ldots, N \}$ such that $\tau_j = N_j \Delta$.

Since it follows that 
\begin{equation*}
x_l(t) = \ee^{-\lambda_l t} x_l(0)
+\int_0^t \sigma(s) \ee^{-\lambda_l(t-s)} \dd w_l(s),
\end{equation*}
we have
\begin{align*}
\Delta_i^N x_l 
&= -\ee^{-\lambda_l (i-1)\Delta} (1-\ee^{-\lambda_l \Delta})x_l(0)
\\
&\quad-
(1-\ee^{-\lambda_l \Delta}) 
\int_0^{(i-1)\Delta} \sigma(s) \ee^{-\lambda_l((i-1)\Delta -s)} \dd w_l(s)
\\
&\quad+\int_{(i-1)\Delta}^{i \Delta} \sigma(s)
\ee^{-\lambda_l(i \Delta-s)} \dd w_l(s)
\\
&=: A_{i,l} +B_{i,l} +C_{i,l}.
\end{align*}
Let $p \in \{1,\ldots, r+1 \}$ and $v_i = (\sigma(i \Delta))^2$. 
For $N_{p-1} +1 \le i \le N_p$, we have
\begin{align*}
\EE[B_{i,l}^2] 
&= (1-\ee^{-\lambda_l \Delta})^2 
\int_0^{(i-1)\Delta} \sigma(s)^2 \ee^{-2\lambda_l((i-1) \Delta -s)} \dd s
\\
&= (1-\ee^{-\lambda_l \Delta})^2
\int_0^{N_{p-1}\Delta } \sigma(s)^2 \ee^{-2\lambda_l((i-1) \Delta -s)} \dd s
\\
&\quad 
+(1-\ee^{-\lambda_l \Delta})^2 
\int_{N_{p-1}\Delta}^{(i-1)\Delta} \sigma(s)^2 \ee^{-2\lambda_l((i-1) \Delta -s)} \dd s
\\
&= 
(1-\ee^{-\lambda_l \Delta})^2
\sum_{k=1}^{N_{p-1}} 
\int_{(k-1)\Delta}^{k \Delta} v_{k-1} \ee^{-2\lambda_l((i-1) \Delta -s)} \dd s
\\
&\quad 
+\frac{v_{i-1}(1-\ee^{-\lambda_l \Delta})^2}{2\lambda_l}
(1-\ee^{-2\lambda_l (i-1 -N_{p-1}) \Delta})
\\
&= 
\frac{(1-\ee^{-\lambda_l \Delta})^2}{2\lambda_l} 
(1-\ee^{-2\lambda_l \Delta}) \ee^{-2\lambda_l(i-1) \Delta} 
\sum_{k=1}^{N_{p-1}} v_{k-1} \ee^{2\lambda_l k \Delta}
\\
&\quad 
+\frac{v_{i-1}(1-\ee^{-\lambda_l \Delta})^2}{2\lambda_l}
(1-\ee^{-2\lambda_l (i-1 -N_{p-1}) \Delta}),
\\
\EE[C_{i,l}^2] 
&= \frac{v_{i-1}(1-\ee^{-2\lambda_l \Delta})}{2\lambda_l}.
\end{align*}
Therefore, we obtain
\begin{align}
&\EE[B_{i,l}^2] + \EE[C_{i,l}^2] 
\nonumber
\\
&=
\frac{v_{i-1}(1-\ee^{-\lambda_l \Delta})^2}{2\lambda_l}
+\frac{v_{i-1}(1-\ee^{-2\lambda_l \Delta})}{2\lambda_l}
\nonumber
\\
&\quad 
+\frac{\ee^{-2 \lambda_l (i-1)\Delta}(1-\ee^{-\lambda_l \Delta})^2}{2\lambda_l}
\biggl(-v_{i-1} \ee^{2\lambda_l N_{p-1} \Delta} 
+(1-\ee^{-2\lambda_l \Delta})
\sum_{k=1}^{N_{p-1}} v_{k-1} \ee^{2\lambda_l k \Delta}
\biggr)
\nonumber
\\
&= \frac{v_{i-1} (1-\ee^{-\lambda_l \Delta})}{\lambda_l}
\nonumber
\\
&\quad
+\frac{\ee^{-2 \lambda_l (i-1)\Delta}(1-\ee^{-\lambda_l \Delta})^2}{2\lambda_l}
\biggl(-v_{i-1} \ee^{2\lambda_l N_{p-1} \Delta} 
+(1-\ee^{-2\lambda_l \Delta})
\sum_{k=1}^{N_{p-1}} v_{k-1} \ee^{2\lambda_l k \Delta}
\biggr)
\nonumber
\\
&=:
\frac{v_{i-1}(1-\ee^{-\lambda_l \Delta})}{\lambda_l}
+s_{i,l}
\label{eq-app-1}
\end{align}
and
\begin{align}
\sum_{i=N_{p-1}+1}^{N_p} \sum_{l \in \mathbb N} |s_{i,l}| 
&\lesssim
\sum_{i=N_{p-1}+1}^{N_p} \sum_{l \in \mathbb N}
\frac{\ee^{-2 \lambda_l (i-1)\Delta}(1-\ee^{-\lambda_l \Delta})^2}{\lambda_l}
\nonumber
\\
&\qquad \times
\biggl(
\ee^{2 \lambda_l N_{p-1} \Delta} +(1-\ee^{-2 \lambda_l \Delta}) 
\frac{\ee^{2 \lambda_l \Delta}}{\ee^{2 \lambda_l \Delta}-1}
\biggr)
\nonumber
\\
&=
\sum_{i=N_{p-1}+1}^{N_p} \sum_{l \in \mathbb N}
\frac{\ee^{-2 \lambda_l (i-1)\Delta}(1-\ee^{-\lambda_l \Delta})^2}{\lambda_l}
(1+\ee^{2 \lambda_l N_{p-1} \Delta })
\nonumber
\\
&=
\sum_{l \in \mathbb N}
\frac{(1-\ee^{-\lambda_l \Delta})^2}{\lambda_l}
(1+\ee^{2 \lambda_l N_{p-1} \Delta })
\sum_{i=N_{p-1}+1}^{N_p} \ee^{-2 \lambda_l (i-1)\Delta}
\nonumber
\\
&\le
\sum_{l \in \mathbb N}
\frac{(1-\ee^{-\lambda_l \Delta})^2}{\lambda_l}
(1+\ee^{2 \lambda_l N_{p-1} \Delta })
\times 
\frac{\ee^{-2 \lambda_l N_{p-1} \Delta}}{1-\ee^{-2 \lambda_l \Delta}}
\nonumber
\\
&\lesssim
\sum_{l \in \mathbb N}
\frac{1-\ee^{-\lambda_l \Delta}}{\lambda_l}
= \OO(\sqrt{\Delta}).
\label{eq-app-2}
\end{align}

\subsubsection{Proofs of Proposition \ref{app_prop1} and Theorem \ref{app_th1}}
\begin{proof}[\bf{Proof of Proposition \ref{app_prop1}}]
Let $p \in \{ 1, \ldots, r+1 \}$. 
For $N_{p-1} +1 \le i \le N_p$, we have
\begin{align*}
\EE \bigl[ (\Delta_i^N X(y))^2 \bigr]
&= \sum_{l_1,l_2 \in \mathbb N} 
\EE[(\Delta_i^N x_{l_1})(\Delta_i^N x_{l_2})] e_{l_1}(y) e_{l_2}(y)
\\
&= \biggl( \sum_{l \in \mathbb N} A_{i,l} e_l(y) \biggr)^2
+\sum_{l \in \mathbb N} \bigl(\EE[B_{i,l}^2] +\EE[C_{i,l}^2] \bigr) e_l(y)^2.
\end{align*}
In the same way as \cite{Bibinger_Trabs2020}, we obtain
\begin{equation*}
\sum_{i=1}^N \biggl( \sum_{l \in \mathbb N} A_{i,l} e_l(y) \biggr)^2
= \OO(\sqrt{\Delta}),
\end{equation*}
\begin{align*}
\sum_{l \in \mathbb N} \bigl(\EE[B_{i,l}^2] +\EE[C_{i,l}^2] \bigr) e_l(y)^2
&= \sum_{l \in \mathbb N}
\frac{v_{i-1}(1-\ee^{-\lambda_l \Delta})}{\lambda_l} e_l(y)^2
+\sum_{l \in \mathbb N} s_{i,l} e_l(y)^2
\\
&= 
\sqrt{\Delta} \frac{v_{i-1}}{\sqrt{\pi \theta_2}} \ee^{-\kappa y} 
+\OO(\Delta^{3/2})
+\sum_{l \in \mathbb N} s_{i,l} e_l(y)^2,
\end{align*}
\begin{equation*}
\sum_{i=N_{p-1}+1}^{N_p} \sum_{l \in \mathbb N} |s_{i,l}| e_l(y)^2 
\lesssim
\sum_{i=N_{p-1}+1}^{N_p} \sum_{l \in \mathbb N} |s_{i,l}|
=\OO(\sqrt{\Delta})
\end{equation*}
and
\begin{align*}
\EE \bigl[ (\Delta_i^N X(y))^2 \bigr]
&= 
\biggl( \sum_{l \in \mathbb N} A_{i,l} e_l(y) \biggr)^2
+\sum_{l \in \mathbb N} \bigl(\EE[B_{i,l}^2] +\EE[C_{i,l}^2] \bigr) e_l(y)^2
\\
&= \sqrt{\Delta} \frac{v_{i-1}}{\sqrt{\pi \theta_2}} 
\ee^{-\kappa y} +r_{i} +\OO(\Delta^{3/2}),
\end{align*}
where $\sum_{i=N_{p-1}+1}^{N_p} r_{i} = \OO(\sqrt{\Delta})$
for $y \in [b,1-b]$. Therefore, we obtain 
\begin{align*}
\frac{1}{N \sqrt{\Delta}} \sum_{i=1}^N \EE \bigl[ (\Delta_i^N X(y))^2 \bigr]
&= \frac{1}{N \sqrt{\Delta}} 
\sum_{p=1}^{r+1} \sum_{i=N_{p-1}+1}^{N_p} 
\EE \bigl[ (\Delta_i^N X(y))^2 \bigr]
\\
&= 
\frac{1}{N} 
\sum_{p=1}^{r+1} \sum_{i=N_{p-1}+1}^{N_p}
\frac{v_{i-1}}{\sqrt{\pi \theta_2}} 
\ee^{-\kappa y}
+\frac{1}{N \sqrt{\Delta}} \sum_{i=1}^{N} r_i +\OO(\Delta)
\\
&= \frac{\ee^{-\kappa y}}{\sqrt{\pi \theta_2}}
\sum_{p=1}^{r+1} \frac{N_p -N_{p-1}}{N} \sigma_p^2 
+\OO(\Delta)
\\
&= \frac{\ee^{-\kappa y}}{\sqrt{\pi \theta_2}}
\sum_{p=1}^{r+1} (\tau_{p} -\tau_{p-1}) \sigma_p^2 
+\OO(\Delta)
\\
&= \frac{V_0 \cdot \ee^{-\kappa y}}{\sqrt{\pi}} +\OO(\Delta)
\end{align*}
for $y \in [b, 1-b]$.
\end{proof}

We prepare the following lemma in order to show Theorem \ref{app_th1}.
\begin{lem}\label{app_lem1}
Under [D1], it follows that 
\begin{align*}
&\cov[\Delta_i^N X(\widetilde y_j), \Delta_{i'}^NX(\widetilde y_{j'})]
\\
&= 
\OO \biggl( \frac{\Delta}{|i-i'|+1} \biggr) + 
\OO \biggl( \frac{\Delta^{1/2}}{|i-i'|+1} \bigg(
\ind_{\{ j= j'\}} 
+\ind_{\{ j \neq j'\}} \frac{m^2 \Delta}{|j-j'|^2+1} \biggr) \biggr),
\end{align*}
\begin{equation*}
\sum_{i,i'=1}^N \cov \bigl[ (\Delta_i^N X(\widetilde y_j))^2, 
(\Delta_{i'}^N X(\widetilde y_j))^2 \bigr]
= \OO(1)
\quad \text{uniformly in } j, 
\end{equation*}
\begin{equation*}
\sum_{j,j'=1}^{m_1} \sum_{i,i'=1}^N
\cov \bigl[ (\Delta_i^N X(\widetilde y_j))^2, 
(\Delta_{i'}^N X(\widetilde y_{j'}))^2 \bigr]
= \OO(m).
\end{equation*}
\end{lem}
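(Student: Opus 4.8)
The plan is to reduce all three assertions to estimates on the one–coordinate covariances and then to the Isserlis formula for products of jointly Gaussian variables, after which everything becomes a matter of summing convergent series; the estimates mirror those of \cite{Bibinger_Trabs2020}, the point being that the step volatility \eqref{vol} does not affect the leading order.

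First I would reduce to coordinates. Since $\{w_l\}_{l\in\mathbb N}$ are independent and, by [D1], $X_0$ is deterministic, write $\Delta_i^N x_l=A_{i,l}+M_{i,l}$ with $A_{i,l}$ deterministic and $M_{i,l}=B_{i,l}+C_{i,l}$ equal to the increment $\zeta_l(i\Delta)-\zeta_l((i-1)\Delta)$ of the Gaussian martingale $\zeta_l(t)=\int_0^t\sigma(s)\ee^{-\lambda_l(t-s)}\dd w_l(s)$, so that $\cov[\Delta_i^N X(\widetilde y_j),\Delta_{i'}^N X(\widetilde y_{j'})]=\sum_l\EE[M_{i,l}M_{i',l}]\,e_l(\widetilde y_j)e_l(\widetilde y_{j'})$. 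Evaluating $\EE[\zeta_l(s)\zeta_l(t)]$ — via \eqref{eq-app-1} for $i=i'$ and disjointness of the increments for $i\neq i'$ — gives, up to correction terms controlled by \eqref{eq-app-2} and using that $\sigma$ is bounded and bounded away from $0$,
\[
\bigl|\EE[M_{i,l}M_{i',l}]\bigr|\lesssim a_l:=\ee^{-\lambda_l|i-i'|\Delta}\,\frac{(1-\ee^{-\lambda_l\Delta})^{1+\ind_{\{i\neq i'\}}}}{\lambda_l}.
\]
Inserting $e_l(y)=\sqrt2\,\ee^{-\kappa y/2}\sin(\pi l y)$ and $2\sin(\pi l y)\sin(\pi l y')=\cos(\pi l(y-y'))-\cos(\pi l(y+y'))$, the covariance becomes a bounded multiple of $\sum_l a_l\cos(\pi l\theta)$ with $\theta\in\{\widetilde y_j\pm\widetilde y_{j'}\}$ (and $\theta=2\widetilde y_j$ when $j=j'$). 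For $\theta=0$ the crude bound $\sum_l a_l\lesssim\Delta^{1/2}/(|i-i'|+1)$ — of the type in Lemma \ref{lem6}, but carrying the extra weight $\ee^{-\lambda_l|i-i'|\Delta}$ — produces the $\ind_{\{j=j'\}}$ term. For $\theta\neq0$ I would apply Abel summation twice against the Fej\'er kernel $D_l(\theta)=\sum_{p=1}^l(l+1-p)\cos(\pi p\theta)$, which obeys $|D_l(\theta)|\lesssim\min\{l^2,\,l+\|\theta\|^{-2}\}$, reducing $\sum_l a_l\cos(\pi l\theta)$ to $\sum_l(a_{l+1}-2a_l+a_{l-1})D_l(\theta)$ and bounding it by $\Delta/(|i-i'|+1)+\Delta^{3/2}\|\theta\|^{-2}/(|i-i'|+1)$ through the spectral sums $\sum_l l^q|a_{l+1}-2a_l+a_{l-1}|$, $q=0,1$ (the exponential weight supplying the $(|i-i'|+1)^{-1}$). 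Since $\widetilde y_j,\widetilde y_{j'}\in[b,1-b]$ one has $\|\widetilde y_j+\widetilde y_{j'}\|,\|2\widetilde y_j\|\sim1$ and $\|\widetilde y_j-\widetilde y_{j'}\|\sim|j-j'|/m$, and $\Delta^{3/2}m^2/|j-j'|^2=\Delta^{1/2}\cdot m^2\Delta/|j-j'|^2$; collecting the three values of $\theta$ gives the first display.

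For the remaining two displays, note that under [D1] every finite family $\{\Delta_i^N X(\widetilde y_j)\}$ is jointly Gaussian, so by Isserlis' identity
\[
\cov\!\bigl[(\Delta_i^N X(\widetilde y_j))^2,(\Delta_{i'}^N X(\widetilde y_{j'}))^2\bigr]=2\,c_{i,i',j,j'}^{\,2}+4\,\mu_{i,j}\mu_{i',j'}\,c_{i,i',j,j'},
\]
with $c_{i,i',j,j'}$ the covariance bounded above and $\mu_{i,j}=\EE[\Delta_i^N X(\widetilde y_j)]=\sum_l A_{i,l}e_l(\widetilde y_j)$, which satisfies $\sum_{i=1}^N\mu_{i,j}^2=\OO(\sqrt\Delta)$ uniformly in $j$ (the estimate $\sum_i(\sum_l A_{i,l}e_l(y))^2=\OO(\sqrt\Delta)$ from the proof of Proposition \ref{app_prop1}). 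For $j=j'$ one has $|c_{i,i',j,j}|\lesssim(\Delta+\Delta^{1/2})/(|i-i'|+1)$, hence $\sum_{i,i'}c_{i,i',j,j}^{\,2}\lesssim\Delta\sum_{i,i'}(|i-i'|+1)^{-2}\lesssim\Delta N=\OO(1)$, and by $2|\mu_{i,j}\mu_{i',j}|\le\mu_{i,j}^2+\mu_{i',j}^2$ together with $\sum_{i'}(|i-i'|+1)^{-1}=\OO(\log N)$ the cross term is $\lesssim\Delta^{1/2}\log N\sum_i\mu_{i,j}^2=\OO(\Delta\log N)=\oo(1)$, uniformly in $j$; this is the second display, and summing it over $j$ contributes $\OO(m)$ to the third. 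For $j\neq j'$, inserting the full first-display bound, $c_{i,i',j,j'}^{\,2}\lesssim\Delta^2(|i-i'|+1)^{-2}+\Delta(|i-i'|+1)^{-2}\,m^4\Delta^2(|j-j'|^2+1)^{-2}$; summing over $i,i'$ (a factor $\lesssim N$) and then over $j\neq j'$ gives $\lesssim m^2\Delta+m^5\Delta^2=\OO(m)$ since $m^2/N=\OO(1)$ and $\Delta=1/N$, while the cross term is handled exactly as for $j=j'$ and is $\oo(1)$. Adding the $j=j'$ and $j\neq j'$ contributions yields the third display.

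The main obstacle is the oscillation estimate in the second step: extracting simultaneously the spatial gain $m^2\Delta/(|j-j'|^2+1)$ — which forces the \emph{double} summation by parts against the Fej\'er kernel and precise control of $\sum_l l^q|a_{l+1}-2a_l+a_{l-1}|$ — and the temporal gain $(|i-i'|+1)^{-1}$ coming from the weight $\ee^{-\lambda_l|i-i'|\Delta}$, all while keeping the step-function corrections $s_{i,l}$ (and their $i\neq i'$ analogues) under control through \eqref{eq-app-1}–\eqref{eq-app-2}. Everything else is routine summation of convergent series.
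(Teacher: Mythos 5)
Your proposal follows essentially the same route as the paper: decompose $\Delta_i^N X(\widetilde y_j)$ into coordinates with $\Delta_i^N x_l = A_{i,l}+B_{i,l}+C_{i,l}$, use the exact step-volatility second-moment formulas (the analogues of \eqref{eq-app-1}--\eqref{eq-app-2} and their $i\neq i'$ versions), extract the spatial gain via the product-to-sum identity for $\sin(\pi l \widetilde y_j)\sin(\pi l \widetilde y_{j'})$ and the temporal gain $(|i-i'|+1)^{-1}$ from the factor $\ee^{-\lambda_l(|i-i'|-1)\Delta}$, then reduce the covariances of squares by the Wick/Isserlis identity to $2c^2+4\mu_i\mu_{i'}c$ with $\sum_i \mu_{i,j}^2=\mathcal A_{j,N}=\OO(\sqrt\Delta)$, and finally sum under $m^2\Delta=\OO(1)$. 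The only substantive difference is in the oscillatory estimate: the paper splits the coefficients into the explicit spectral sums $F_{j'}^{j}$, $G_{J,j'}^{j}$, $H_{J,K,j'}^{j}$ (with $H_{J,K}=\OO(G_{J-2K})$ absorbing the $v_{k-1}$-weighted geometric sums coming from the step volatility), replaces $f(\lambda_l\Delta)$ by $g(l\sqrt\Delta)$ at cost $\OO(\Delta)$, and then cites Lemmas A.8 and A.9 of Hildebrandt and Trabs, whereas you propose a self-contained double Abel summation against a Fej\'er-type kernel; your AM--GM-plus-$\log N$ treatment of the mean cross term is a harmless variant of the paper's Cauchy--Schwarz via $\mathcal A_{j,N}$, and your final bookkeeping ($\Delta N=\OO(1)$, $m^2\Delta+m^5\Delta^2=\OO(m)$) matches the paper's.

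One point needs care in your second step: after asserting only the absolute bound $|\EE[M_{i,l}M_{i',l}]|\lesssim a_l$, you write that the covariance ``becomes a bounded multiple of $\sum_l a_l\cos(\pi l\theta)$.'' A majorant on the coefficients cannot be inserted inside an oscillatory sum --- from an absolute bound alone you only get $\sum_l a_l$, i.e.\ $\Delta^{1/2}/(|i-i'|+1)$ with no spatial gain, which is insufficient for the off-diagonal term and hence for the $\OO(m)$ bound in the third display. The summation by parts must be run on the exact coefficients (or on each explicit term of the paper's $F$, $G$, $H$ decomposition), which are smooth in $l$ and whose step-volatility weights are first reduced to unweighted spectral sums exactly as in \eqref{eq-app-1}--\eqref{eq-app-2}; with that repair, and noting that the sum-frequency argument $\widetilde y_j+\widetilde y_{j'}\in[2b,2-2b]$ stays away from the kernel's singularities because $b>0$, your kernel argument reproduces the bounds the paper imports from \cite{Hildebrandt_Trabs2021}.
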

\begin{proof}
The essence of the proof is same as Lemma 4.11 in \cite{TKU2025a}.
It holds that 
\begin{align*}
\cov[\Delta_i^N X(\widetilde y_j), \Delta_{i'}^NX(\widetilde y_{j'})]
&= \sum_{l \in \mathbb N} \cov[\Delta_i^N x_l, \Delta_{i'}^Nx_l] 
e_l(\widetilde y_j) e_l(\widetilde y_{j'})
\\
&= \sum_{l \in \mathbb N} \cov[(B_{i,l}+C_{i,l})(B_{i',l}+C_{i',l})] 
e_l(\widetilde y_j) e_l(\widetilde y_{j'}).
\end{align*}
For positive integers $J$ and $K$, we define
\begin{align*}
F_{j'}^{j} &=
\sum_{l \in \mathbb N} 
\frac{1-\ee^{-\lambda_l \Delta}}{\lambda_l} e_l(\widetilde y_j) e_l(\widetilde y_{j'}),
\\
G_{J,j'}^{j} &=
\sum_{l \in \mathbb N} 
\frac{(1-\ee^{-\lambda_l \Delta})^2}{\lambda_l} 
\ee^{-\lambda_l J \Delta} e_l(\widetilde y_j) e_l(\widetilde y_{j'}),
\\
H_{J,K,j'}^{j} &=
\sum_{l \in \mathbb N} 
\biggl(
\frac{(1-\ee^{-\lambda_l \Delta})^2}{\lambda_l}
\ee^{-\lambda_l J \Delta}
\sum_{p = 1}^{K} v_{p-1} \ee^{2\lambda_l p \Delta}
\biggr) e_l(\widetilde y_j) e_l(\widetilde y_{j'}),
\quad J > 2K.
\end{align*}
Since it follows that 
\begin{equation*}
\cov[(B_{i,l}+C_{i,l})(B_{i',l}+C_{i',l})]
= 
\begin{cases}
\EE[B_{i,l}^2] +\EE[C_{i,l}^2], & i = i',
\\
\EE[B_{i,l}B_{i',l}] +\EE[B_{i,l}C_{i',l}]
+\EE[B_{i',l} C_{i,l}], & i \neq i',
\end{cases}
\end{equation*}
\begin{align*}
\EE[B_{i,l}B_{i',l}]
&= (1-\ee^{-\lambda_l \Delta})^2
\int_0^{(i \land i' -1)\Delta}
\sigma(s)^2 \ee^{-\lambda_l((i+i'-2)\Delta -2s)} \dd s
\\
&= (1-\ee^{-\lambda_l \Delta})^2 \ee^{-\lambda_l(i+i'-2)\Delta}
\int_0^{(i \land i' -1)\Delta}
\sigma(s)^2 \ee^{2\lambda_l s} \dd s
\\
&= (1-\ee^{-\lambda_l \Delta})^2
\ee^{-\lambda_l(i+i'-2)\Delta}
\sum_{k = 1}^{i \land i'-1} v_{k-1}
\int_{(k-1) \Delta}^{k\Delta} \ee^{2\lambda_l s} \dd s
\\
&= \frac{(1-\ee^{-\lambda_l \Delta})^2}{2\lambda_l}
(1 -\ee^{- 2\lambda_l \Delta}) \ee^{-\lambda_l(i+i'-2)\Delta}
\sum_{k = 1}^{i \land i'-1} v_{k-1} \ee^{2\lambda_l k \Delta},
\end{align*}
\begin{align*}
\EE[B_{i,l}C_{i',l}]
&= -\ind_{\{i > i'\}} (1-\ee^{-\lambda_l \Delta})
\int_{(i'-1)\Delta}^{i'\Delta} \sigma(s)^2
\ee^{-\lambda_l((i+i'-1)\Delta -2s)} \dd s
\\
&= -\ind_{\{i > i'\}} 
v_{i'-1} (1-\ee^{-\lambda_l \Delta})
\ee^{-\lambda_l(i-i'-1)\Delta}
\frac{1-\ee^{-2 \lambda_l \Delta}}{2 \lambda_l}
\\
&= -\ind_{\{i > i'\}} 
\frac{v_{i'\land i -1} (1-\ee^{-\lambda_l \Delta})^2}{2 \lambda_l}
\ee^{-\lambda_l(|i-i'|-1)\Delta}(1+\ee^{- \lambda_l \Delta}),
\end{align*}
\begin{align*}
&\EE[B_{i,l}^2] +\EE[C_{i,l}^2] 
\\
&= 
\frac{v_{i-1} (1-\ee^{-\lambda_l \Delta})}{\lambda_l}
\\
&\quad+\frac{\ee^{-2 \lambda_l (i-1)\Delta}(1-\ee^{-\lambda_l \Delta})^2}{2\lambda_l}
\biggl(-v_{i-1} \ee^{2\lambda_l N_{p-1} \Delta} 
+(1-\ee^{-2\lambda_l \Delta})
\sum_{k=1}^{N_{p-1}} v_{k-1} \ee^{2\lambda_l k \Delta}
\biggr),
\\
&\EE[B_{i,l}B_{i',l}] +\EE[B_{i,l}C_{i',l}] +\EE[B_{i',l} C_{i,l}] 
\\
&=
\frac{(1-\ee^{-\lambda_l \Delta})^2}{2\lambda_l}
(1 -\ee^{- 2\lambda_l \Delta}) \ee^{-\lambda_l(i+i'-2)\Delta}
\sum_{k = 1}^{i \land i'-1} v_{k-1} \ee^{2\lambda_l k \Delta} 
\\
&\quad
-\ind_{\{i \neq i'\}} 
\frac{v_{i'\land i -1} (1-\ee^{-\lambda_l \Delta})^2}{2\lambda_l}
\ee^{-\lambda_l(|i-i'|-1)\Delta}(1+\ee^{- \lambda_l \Delta}),
\end{align*}
we obtain
\begin{align*}
&\cov[\Delta_i^N X(\widetilde y_j), \Delta_{i'}^NX(\widetilde y_{j'})]
\\
&=
\begin{cases}
v_{i-1} F_{j'}^{j} +\frac{1}{2}(-v_{i-1} G_{2(i-1-N_{p-1}),j'}^{j}
+H_{2(i-1),N_{p-1}, j'}^{j}), 
& N_{p-1} +1 \le i = i' \le N_p,
\\
\frac{1}{2}(
H_{i+i'-2,i \land i'-1, j'}^{j}
-v_{i \land i' -1} (G_{|i-i'|-1,j'}^{j} +G_{|i-i'|,j'}^{j})), 
& i \neq i'.
\end{cases}
\end{align*}
Let $f(x) = \frac{1-\ee^{-x}}{x}$ and $g(x) = f(\theta_2 \pi^2 x^2)$.
It then follows from the equality $-2 \sin(a)\sin(b) = \cos(a+b) -\cos(a-b)$ and
Lemmas A.8 and A.9 in \cite{Hildebrandt_Trabs2021} that 
\begin{align*}
F_{j'}^j &= \Delta \sum_{l \in \mathbb N} f(\lambda_l \Delta) 
e_l(\widetilde y_j) e_l(\widetilde y_{j'})
\\
&= \Delta \sum_{l \in \mathbb N} f(\theta_2 \pi^2 l^2 \Delta) 
e_l(\widetilde y_j) e_l(\widetilde y_{j'})
+\OO(\Delta)
\\
&= \Delta \sum_{l \in \mathbb N} g(l \sqrt{\Delta}) 
e_l(\widetilde y_j) e_l(\widetilde y_{j'})
+\OO(\Delta)
\\
&= -\Delta \ee^{-\kappa (y_j+y_{j'})/2}
\sum_{l \in \mathbb N} g(l \sqrt{\Delta}) 
\bigl( \cos(\pi l (\widetilde y_j+ \widetilde y_{j'})) 
-\cos(\pi l (\widetilde y_j -\widetilde y_{j'})) \bigr)
+\OO(\Delta)
\\
&= 
\begin{cases}
\OO(\sqrt{\Delta}), & j=j',
\\
\OO \bigl( \frac{\Delta^{3/2}}{|j-j'|^2 \delta^2} \bigr) +\OO(\Delta), & j \neq j'.
\end{cases}
\end{align*}
Since it follows that $G_{J,j'}^j = \OO(F_{j'}^j/J)$ and 
$H_{J,K,j'}^{j} = \OO(G_{J-2K,j'}^{j})$, 
it holds that 
\begin{align*}
F_{j'}^{j} &= 
\OO(\Delta) + 
\OO \biggl( \Delta^{1/2} \bigg(
\ind_{\{ j= j'\}} 
+\ind_{\{ j \neq j'\}} \frac{m^2 \Delta}{|j-j'|^2+1} \biggr) \biggr),
\\
G_{J,j'}^{j} &= 
\OO \biggl( \frac{\Delta}{J+1} \biggr) + 
\OO \biggl( \frac{\Delta^{1/2}}{J+1} \bigg(
\ind_{\{ j= j'\}} 
+\ind_{\{ j \neq j'\}} \frac{m^2 \Delta}{|j-j'|^2+1} \biggr) \biggr),
\\
H_{J,K,j'}^{j} &= 
\OO \biggl( \frac{\Delta}{J-2K+1} \biggr) + 
\OO \biggl( \frac{\Delta^{1/2}}{J-2K+1} \bigg(
\ind_{\{ j= j'\}} 
+\ind_{\{ j \neq j'\}} \frac{m^2 \Delta}{|j-j'|^2+1} \biggr) \biggr).
\end{align*}
Therefore, we obtain
\begin{align*}
&v_{i-1} F_{j'}^{j} +\frac{1}{2}(-v_{i-1} G_{2(i-1-N_{p-1}),j'}^{j}
+H_{2(i-1),N_{p-1}, j'}^{j})
\\
&= \OO(\Delta) + 
\OO \biggl( \Delta^{1/2} \bigg(
\ind_{\{ j= j'\}} 
+\ind_{\{ j \neq j'\}} \frac{m^2 \Delta}{|j-j'|^2+1} \biggr) \biggr),
\end{align*}
\begin{align*}
&H_{i+i'-2,i \land i'-1, j'}^{j}
-v_{i \land i' -1} (G_{|i-i'|-1,j'}^{j} +G_{|i-i'|,j'}^{j})
\\
&= 
\OO \biggl( \frac{\Delta}{|i-i'|+1} \biggr) + 
\OO \biggl( \frac{\Delta^{1/2}}{|i-i'|+1} \bigg(
\ind_{\{ j= j'\}} 
+\ind_{\{ j \neq j'\}} \frac{m^2 \Delta}{|j-j'|^2+1} \biggr) \biggr)
\end{align*}
and
\begin{align*}
&\cov[\Delta_i^N X(\widetilde y_j), \Delta_{i'}^NX(\widetilde y_{j'})]
\\
&= 
\OO \biggl( \frac{\Delta}{|i-i'|+1} \biggr) + 
\OO \biggl( \frac{\Delta^{1/2}}{|i-i'|+1} \bigg(
\ind_{\{ j= j'\}} 
+\ind_{\{ j \neq j'\}} \frac{m^2 \Delta}{|j-j'|^2+1} \biggr) \biggr).
\end{align*}
In particular, it follows from $m^2 \Delta \to 0$ that
\begin{align*}
\sum_{i,i'=1}^N \cov[\Delta_i^N X(\widetilde y_j), \Delta_{i'}^NX(\widetilde y_j)]^2
= \OO \biggl( \sum_{i,i'=1}^N \frac{\Delta}{|i-i'|^2+1} \biggr)
= \OO(1)
\end{align*}
and 
\begin{align*}
&\sum_{j,j'=1}^m \sum_{i,i'=1}^N 
\cov[\Delta_i^N X(\widetilde y_j), \Delta_{i'}^NX(\widetilde y_{j'})]^2
\\
&= 
\OO \biggl( \sum_{j,j'=1}^m \sum_{i,i'=1}^N \frac{\Delta^2}{|i-i'|^2+1} \biggr) + 
\OO \Biggl( \Delta\sum_{i,i'=1}^N \frac{1}{|i-i'|^2+1} 
\bigg(
m  + \sum_{j,j'=1}^m \frac{m^4 \Delta^2}{|j-j'|^4+1} \biggr)
\Biggr)
\\
&= 
\OO ( m^2 \Delta^2 N ) + 
\OO \bigl( \Delta N (m  + m^5 \Delta^2) \bigr)
\\
&= \OO (m).
\end{align*}
Since $\mathcal A_{j, N} = \sum_{i=1}^N (\sum_{l \in \mathbb N} A_{i,l} e_l(y_j))^2 
= \OO(\sqrt{\Delta})$, we have
\begin{align*}
&\sum_{i,i'=1}^N 
\cov \bigl[ (\Delta_i^N X(\widetilde y_j))^2, (\Delta_{i'}^NX(\widetilde y_j))^2 \bigr]
\\
&\lesssim
\biggl( \sum_{i,i'=1}^N 
\cov[\Delta_i^N X(\widetilde y_j), \Delta_{i'}^NX(\widetilde y_j)]^2 \biggr)^{1/2}
\mathcal A_{j, N}
\\
&\quad
+\sum_{i,i'=1}^N \cov[\Delta_i^N X(\widetilde y_j), \Delta_{i'}^NX(\widetilde y_j)]^2
\\
&= \OO(\sqrt{\Delta}) +\OO(1)
= \OO(1),
\end{align*}
\begin{align*}
&\sum_{j,j'=1}^m \sum_{i,i'=1}^N 
\cov \bigl[ (\Delta_i^N X(\widetilde y_j))^2, 
(\Delta_{i'}^NX(\widetilde y_{j'}))^2 \bigr]
\\
&\lesssim
\biggl( 
\sum_{j,j'=1}^m \sum_{i,i'=1}^N 
\cov[\Delta_i^N X(\widetilde y_j), \Delta_{i'}^NX(\widetilde y_{j'})]^2
\biggr)^{1/2}
\sum_{j=1}^m \mathcal A_{j, N}
\\
&\quad+
\sum_{j,j'=1}^m \sum_{i,i'=1}^N 
\cov[\Delta_i^N X(\widetilde y_j), \Delta_{i'}^NX(\widetilde y_{j'})]^2
\\
&= \OO(\sqrt{m} \cdot m \sqrt{\Delta}) +\OO(m)
= \OO(m).
\end{align*}
\end{proof}

\begin{proof}[\bf{Proof of Theorem \ref{app_th1}}]
The essence of the proof is same as Theorem 2.2 in \cite{TKU2025a}.
Let $\nu = (\kappa,V_0)$ and $f(y;\nu) = V_0 \ee^{-\kappa y}/\sqrt{\pi}$.
Using the mean value theorem, we have
\begin{equation*}
-\sqrt{m N} \pd_{\nu} U_{m,N} (\nu^*)^\TT
=\int_0^1 \pd_{\nu}^2 
U_{m,N} (\nu^* +u(\widehat \nu -\nu^*)) \dd u 
\sqrt{m N} (\widehat \nu -\nu^*).
\end{equation*}
Let $h(y;\nu) = f(y;\nu) -f(y;\nu^*)$, 
$U(\nu, \nu^*) = \frac{1}{1-2b} \int_{b}^{1-b} h(y;\nu)^2 \dd y$ and
\begin{equation*}
Z_j = \frac{1}{N} \sum_{i=1}^N 
\biggl( \frac{(\Delta_i^N X(\widetilde y_j))^2}{\sqrt{\Delta}} 
-f(\widetilde y_j;\nu^*) \biggr).
\end{equation*}
Note that the function $\Xi \ni \nu \mapsto U(\nu,\nu^*)$
takes its unique minimum in $\nu = \nu^*$.
Since we find from Proposition \ref{app_prop1} that
\begin{equation*}
Z_j = \frac{1}{N \sqrt{\Delta}} \sum_{i=1}^N 
\bigl( (\Delta_i^N X(\widetilde y_j))^2 
-\EE \bigl[ (\Delta_i^N X(\widetilde y_j))^2 \bigr] \bigr)
+ \OO(\Delta),
\end{equation*}
it follows from Lemma \ref{app_lem1} and the Schwarz inequality that
\begin{align*}
\EE[Z_j^2] = \frac{1}{N} \sum_{i,i'=1}^N 
\cov \bigl[ (\Delta_i^N X(\widetilde y_j))^2, (\Delta_{i'}^NX(\widetilde y_j))^2 \bigr]
+\OO(\Delta^2)
= \OO(\Delta),
\end{align*}
\begin{align*}
\EE \Biggl[ \sup_{\nu \in \Xi} \biggl| 
\frac{1}{m} \sum_{j=1}^m Z_j h(y;\nu) \biggr|^2 \Biggr]
\lesssim
\biggl( \frac{1}{m} \sum_{j=1}^m \EE[Z_j^2]^{1/2} \biggr)^2
= \OO(\Delta).
\end{align*}
Therefore, we obtain
\begin{align*}
&\sup_{\nu \in \Xi} |U_{m,N}(\nu) -U(\nu, \nu^*)|
\\
&\le
\frac{1}{m} \sum_{j=1}^m Z_j^2 
+2 \sup_{\nu \in \Xi} \biggl| \frac{1}{m} \sum_{j=1}^m Z_j h(\widetilde y_j;\nu) \biggr|
+\sup_{\nu \in \Xi} \biggl| \frac{1}{m} 
\sum_{j=1}^m h(\widetilde y_j;\nu)^2 -U(\nu,\nu^*) \biggr|
\\
&\pto 0.
\end{align*}
Since it follows from Lemma \ref{app_lem1} that
\begin{align*}
&\VV \biggl[
\frac{1}{\sqrt{m N \Delta}} \sum_{j=1}^m \sum_{i=1}^N
(\Delta_i^N X(\widetilde y_j))^2 \pd_\nu f(\widetilde y_j;\nu^*)^\TT
\biggr]
\\
&= \frac{1}{m N \Delta} 
\sum_{j,j'=1}^m \sum_{i,i'=1}^N 
\cov \bigl[(\Delta_i^N X(\widetilde y_j))^2, (\Delta_{i'}^NX(\widetilde y_{j'}))^2 \bigr] 
\pd_\nu f(\widetilde y_j;\nu^*)^\TT \pd_\nu f(\widetilde y_j;\nu^*)
\\
&=\OO(1)
\end{align*}
and from Proposition \ref{app_prop1} that for $y \in [b, 1-b]$,
\begin{equation*}
\sum_{i=1}^N
\Bigl( \EE \bigl[ (\Delta_i^N X(y))^2 \bigr] -\sqrt{\Delta} f(y;\nu^*) \Bigr)
= \OO(\sqrt{\Delta}),
\end{equation*}
we obtain by $m = \OO(N^\rho)$ for some $\rho \in (0,1/2)$,
\begin{align*}
\sqrt{m N} \pd_{\nu} U_{m,N} (\nu^*)^\TT
&= \frac{-2}{\sqrt{m N \Delta}} \sum_{j=1}^m \sum_{i=1}^N
\Bigl( (\Delta_i^N X(\widetilde y_j))^2 
-\EE \bigl[ (\Delta_i^N X(\widetilde y_j))^2 \bigr] \Bigr)
\pd_\nu f(\widetilde y_j;\nu^*)^\TT
\\
&\quad 
-\frac{2}{\sqrt{m N \Delta}} \sum_{j=1}^m \sum_{i=1}^N
\Bigl( \EE \bigl[ (\Delta_i^N X(\widetilde y_j))^2 \bigr] 
-\sqrt{\Delta} f(\widetilde y_j;\nu^*) \Bigr)
\pd_\nu f(\widetilde y_j;\nu^*)^\TT
\\
&=\Op(1) +\OO \biggl( \sqrt{\frac{m}{N}} \biggr)
\\
&=\Op(1).
\end{align*}

For $b \in (0,1/2)$, we define
\begin{equation*}
L(\nu^*) = 2 \int_{b}^{1-b} \pd_\nu f(y;\nu^*)^\TT \pd_\nu f(y;\nu^*) \dd y.
\end{equation*}
Note that $L(\nu^*)$ is positive definite (see \cite{Bibinger_Trabs2020}). 
Since there exist continuous functions 
$[b,1-b] \times \Xi \ni (y,\nu) \mapsto g_j(y;\nu)$ ($j = 1, 2$)
such that
\begin{equation*}
\pd_\nu^2 U_{m,N}(\nu)
= \frac{1}{m} \sum_{j=1}^m \biggl\{ g_1(\widetilde y_j;\nu) 
+\biggl( 
\frac{1}{N \sqrt{\Delta}} \sum_{i=1}^N (\Delta_i^N X(\widetilde y_j))^2
\biggr) g_2(\widetilde y_j;\nu)
\biggr \}
\end{equation*}
and it holds from Proposition \ref{app_prop1} that
\begin{equation*}
\sup_{m, N} 
\EE \biggl[ \frac{1}{m N \sqrt{\Delta}} \sum_{j=1}^m \sum_{i=1}^N 
(\Delta_i^N X(\widetilde y_j))^2 \biggr]
< \infty,
\end{equation*}
it follows that for $\epsilon_{m,N} \downarrow 0$,
\begin{align*}
&\sup_{|\nu -\nu^*| \le \epsilon_{m,N}}
\bigl| \pd_\nu^2 U_{m,N}(\nu) -L(\nu^*) \bigr|
\\
&\le 
\sup_{|\nu -\nu^*| \le \epsilon_{m,N}}
\bigl| \pd_\nu^2 U_{m,N}(\nu) -\pd_\nu^2 U_{m,N}(\nu^*) \bigr|
+ \bigl| \pd_\nu^2 U_{m,N}(\nu^*) -L(\nu^*) \bigr|
\\
&\le 
\sup_{\substack{y \in [b,1-b], \\ |\nu -\nu^*| \le \epsilon_{m,N}}} 
|g_1(y;\nu) -g_1(y;\nu^*)|
\\
&\quad+
\biggl( 
\frac{1}{m N \sqrt{\Delta}} \sum_{j=1}^m \sum_{i=1}^N (\Delta_i^N X(\widetilde y_j))^2
\biggr)
\sup_{\substack{y \in [b,1-b], \\ |\nu -\nu^*| \le \epsilon_{m,N}}} 
|g_2(y;\nu) -g_2(y;\nu^*)|
\\
&\quad+ \biggl| \frac{2}{m} \sum_{j=1}^m \pd_\nu f(y_j;\nu^*)^\TT \pd_\nu f(y_j;\nu^*)
-L(\nu^*) \biggr|
\\
&\pto 0
\end{align*}
as $m, N \to \infty$.

Therefore, we get the desire result.
\end{proof}

\subsubsection{Proofs of Proposition \ref{app_prop2} and Theorem \ref{app_th2}}

\begin{proof}[\bf{Proof of Proposition \ref{app_prop2}}]
For $j \in \{ 1, \ldots, m \}$ and $l \in \mathbb N$, we set
$\delta_j e_l = e_l(\widetilde y_j) -e_l(\widetilde y_{j-1})$.
Since it follows that
\begin{equation*}
D_{i,j} X = \sum_{l \in \mathbb N} \Delta_i^N x_l \delta_j e_l,
\end{equation*}
we have
\begin{align*}
\EE \bigl[ (D_{i,j} X)^2 \bigr]
&= 
\biggl(
\sum_{l \in \mathbb N} A_{i,l} \delta_j e_l
\biggr)^2
+\sum_{l \in \mathbb N} 
\bigl(\EE[B_{i,l}^2] +\EE[C_{i,l}^2] \bigr) (\delta_j e_l)^2
\\
&=: S_1 +S_2.
\end{align*}
It follows from the Schwarz inequality that 
\begin{equation*}
S_1 \le \| A_{\theta}^{1/2} X_0 \|^2 \sum_{l \in \mathbb N} 
\frac{(1-\ee^{-\lambda_l \Delta})^2}{\lambda_l}
\ee^{-2(i-1)\lambda_l \Delta} (\delta_j e_l)^2
=: \| A_{\theta}^{1/2} X_0 \|^2 G_{i,j},
\end{equation*}
\begin{equation*}
\sum_{i=1}^{N} G_{i,j} \le F_j
:= \sum_{l \in \mathbb N} 
\frac{1-\ee^{-\lambda_l \Delta}}{\lambda_l} (\delta_j e_l)^2
\end{equation*}
and from \eqref{eq-app-1} and \eqref{eq-app-2} that for $N_{p-1} +1 \le i \le N_p$, 
\begin{equation*}
S_2 = v_{i-1} F_j + \sum_{l \in \mathbb N} s_{i,l} (\delta_j e_l)^2
=: v_{i-1} F_j +H_{i,j},
\quad
\sum_{i=N_{p-1}+1}^{N_p} H_{i,j} \lesssim F_j.
\end{equation*}
Since it holds from the proof of Proposition 3.5 in \cite{Hildebrandt_Trabs2021} that
\begin{equation*}
F_j = \sqrt{\Delta} \ee^{-\kappa \overline y_j} \psi_{r}(\theta_2)
+ \OO(\Delta^{3/2}),
\end{equation*}
we obtain
\begin{equation*}
\EE \bigl[ (D_{i,j} X)^2 \bigr] 
= \sqrt{\Delta} v_{i-1} 
\ee^{-\kappa \overline y_j} \psi_{r}(\theta_2)
+ R_{i,j} + \OO(\Delta^{3/2}),
\end{equation*}
where
\begin{equation*}
\sum_{i= N_{p-1}+1}^{N_p} R_{i,j} = \OO(\sqrt{\Delta})
\end{equation*}
uniformly in $j$. Hence, we obtain
\begin{align*}
\frac{1}{N \sqrt{\Delta}} \sum_{i=1}^N \EE \bigl[ (D_{i,j} X)^2 \bigr]
&= \frac{1}{N \sqrt{\Delta}} 
\sum_{p=1}^{r+1} 
\sum_{i=N_{p-1}+1}^{N_p} 
\EE \bigl[ (D_{i,j} X)^2 \bigr]
\\
&= 
\frac{1}{N} 
\sum_{p=1}^{r+1} \sum_{i=N_{p-1}+1}^{N_p} 
v_{i-1} \ee^{-\kappa \overline y_j} \psi_{r}(\theta_2)
\\
&\qquad+
\frac{1}{N \sqrt{\Delta}} 
\sum_{p=1}^{r+1} \sum_{i=N_{p-1}+1}^{N_p} R_{i,j} +\OO(\Delta)
\\
&= 
\ee^{-\kappa \overline y_j} \psi_{r}(\theta_2)
\sum_{p=1}^{r+1} 
\frac{N_p-N_{p-1}}{N} \sigma_p^2 
+\OO(\Delta)
\\
&= 
\ee^{-\kappa \overline y_j} \psi_{r}(\theta_2)
\sum_{p=1}^{r+1} (\tau_p -\tau_{p-1}) \sigma_p^2 
+\OO(\Delta)
\\
&= V \ee^{-\kappa \overline y_j} \psi_{r}(\theta_2) +\OO(\Delta).
\end{align*}
\end{proof}

We give the following lemma in order to show Theorem \ref{app_th2}.
\begin{lem}\label{app_lem2}
Under [D1], it follows that 
\begin{equation*}
\cov[D_{i,j}X, D_{i',j'}X]
= \OO \biggl( \frac{\Delta^{1/2}}{|i-i'|+1} 
\Bigl(\Delta +\frac{1}{|j-j'|^2+1} \Bigr) \biggr),
\end{equation*}
\begin{equation*}
\sum_{i,i'=1}^N \cov \bigl[ (D_{i,j} X)^2, (D_{i',j} X)^2 \bigr]
= \OO(1)
\quad \text{uniformly in } j,
\end{equation*}
\begin{equation*}
\sum_{j,j'=1}^m \sum_{i,i'=1}^N
\cov \bigl[ (D_{i,j} X)^2, (D_{i',j'} X)^2 \bigr]
= \OO(m).
\end{equation*}
\end{lem}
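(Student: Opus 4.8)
The plan is to follow the same route as the proof of Lemma~\ref{app_lem1}, adapted to the spatial increments. Write $\delta_j e_l = e_l(\widetilde y_j) - e_l(\widetilde y_{j-1})$ and use the decomposition $\Delta_i^N x_l = A_{i,l} + B_{i,l} + C_{i,l}$ introduced before Subsubsection~A.3.1, so that $D_{i,j}X = \sum_{l \in \mathbb N}(A_{i,l}+B_{i,l}+C_{i,l})\,\delta_j e_l$. Since $A_{i,l}$ is deterministic under [D1], the covariance $\cov[D_{i,j}X, D_{i',j'}X]$ depends only on the stochastic parts $B_{\cdot,l}, C_{\cdot,l}$, whose first and second moments were already recorded in the computations leading to \eqref{eq-app-1} and \eqref{eq-app-2}. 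Substituting those formulas, $\cov[D_{i,j}X, D_{i',j'}X]$ becomes a finite linear combination, with coefficients bounded in terms of $v_{i-1}$ and powers of $\ee^{-\lambda_l\Delta}$, of the spatial sums
\[
\widetilde F_{j,j'} = \sum_{l\in\mathbb N}\frac{1-\ee^{-\lambda_l\Delta}}{\lambda_l}\,\delta_j e_l\,\delta_{j'}e_l, \qquad \widetilde G_{J,j,j'} = \sum_{l\in\mathbb N}\frac{(1-\ee^{-\lambda_l\Delta})^2}{\lambda_l}\,\ee^{-\lambda_l J\Delta}\,\delta_j e_l\,\delta_{j'}e_l,
\]
and $\widetilde H_{J,K,j,j'}$ defined analogously with the extra factor $\sum_{p=1}^K v_{p-1}\ee^{2\lambda_l p\Delta}$, exactly mirroring $F_{j'}^{j}, G_{J,j'}^{j}, H_{J,K,j'}^{j}$ in the proof of Lemma~\ref{app_lem1}.

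The key step is to estimate $\widetilde F_{j,j'}$. Rewriting $\delta_j e_l\,\delta_{j'}e_l$ via the product-to-sum identity for products of $\sin$ (together with the exponential weights), one reduces the sum over $l$ to expressions covered by Lemmas~A.8 and A.9 of \cite{Hildebrandt_Trabs2021}, and obtains
\[
\widetilde F_{j,j'} = \OO(\Delta^{3/2}) + \OO\Bigl(\tfrac{\Delta^{1/2}}{|j-j'|^2+1}\Bigr),
\]
which in particular recovers $\widetilde F_{j,j} = \OO(\Delta^{1/2})$, consistent with the expansion of $F_j$ in the proof of Proposition~\ref{app_prop2}. The inequalities $\widetilde G_{J,j,j'} = \OO\bigl(\widetilde F_{j,j'}/(J+1)\bigr)$ and $\widetilde H_{J,K,j,j'} = \OO\bigl(\widetilde G_{J-2K,j,j'}\bigr)$ (identical to those in Lemma~\ref{app_lem1}) then transfer this bound with an additional decay factor $1/(|i-i'|+1)$ coming from the index $J$ appearing in each term; collecting everything gives the first claim, $\cov[D_{i,j}X, D_{i',j'}X] = \OO\bigl(\tfrac{\Delta^{1/2}}{|i-i'|+1}(\Delta + \tfrac{1}{|j-j'|^2+1})\bigr)$.

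For the second and third claims I would square this bound and sum, exactly as at the end of the proof of Lemma~\ref{app_lem1}. The elementary estimates $\sum_{i,i'=1}^N \tfrac{\Delta}{(|i-i'|+1)^2} = \OO(1)$ (using $N\Delta = 1$) and $\sum_{j,j'=1}^m \tfrac{1}{(|j-j'|^2+1)^2} = \OO(m)$, combined with $m^2\Delta = \OO(1)$ and $m\Delta \to 0$ under $m = \OO(\sqrt N)$, yield $\sum_{i,i'=1}^N\cov[D_{i,j}X, D_{i',j}X]^2 = \OO(1)$ uniformly in $j$ and $\sum_{j,j'=1}^m\sum_{i,i'=1}^N\cov[D_{i,j}X, D_{i',j'}X]^2 = \OO(m)$. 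Handling the deterministic part via the Cauchy--Schwarz splitting used in Lemma~\ref{app_lem1}, with $\mathcal A_{j,N} = \sum_{i=1}^N\bigl(\sum_l A_{i,l}\delta_j e_l\bigr)^2 = \OO(\sqrt\Delta)$ and $\sqrt m\cdot m\sqrt\Delta = \OO(m)$ since $m\sqrt\Delta = \OO(1)$, then upgrades these to $\sum_{i,i'=1}^N\cov[(D_{i,j}X)^2,(D_{i',j}X)^2] = \OO(1)$ and $\sum_{j,j'=1}^m\sum_{i,i'=1}^N\cov[(D_{i,j}X)^2,(D_{i',j'}X)^2] = \OO(m)$, which are the remaining two assertions.

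The main obstacle is the spatial sum estimate for $\widetilde F_{j,j'}$: establishing the $1/(|j-j'|^2+1)$ decay requires carefully converting $\delta_j e_l\,\delta_{j'}e_l$ into a form that falls under Lemmas~A.8 and A.9 of \cite{Hildebrandt_Trabs2021} and keeping track of the endpoint/boundary contributions, whereas the subsequent summation steps are routine once the pattern from Lemma~\ref{app_lem1} is in place.
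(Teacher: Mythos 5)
Your proposal is correct and follows essentially the same route as the paper: the same decomposition into the deterministic part $A_{i,l}$ and the Gaussian parts $B_{i,l}, C_{i,l}$, the same spatial sums $F_{j'}^{j}$, $G_{J,j'}^{j}$, $H_{J,K,j'}^{j}$ with the bounds you state, the same squaring-and-summing under $m^2\Delta = \OO(1)$, and the same Cauchy--Schwarz splitting with $\mathcal A_{j,N} = \OO(\sqrt{\Delta})$ to pass to covariances of squares. The only cosmetic difference is that the paper justifies the key bound on $F_{j'}^{j}$ by citing a calculation in \cite{TKU2025a} (Lemma 4.10 there), whereas you propose rederiving it via product-to-sum identities and Lemmas A.8--A.9 of \cite{Hildebrandt_Trabs2021}, which is the same type of computation.
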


\begin{proof}
For positive integers $J$ and $K$, we set
\begin{align*}
F_{j'}^{j} &=
\sum_{l \in \mathbb N} 
\frac{1-\ee^{-\lambda_l \Delta}}{\lambda_l}
\delta_j e_l \delta_{j'} e_l,
\\
G_{J,j'}^{j} &=
\sum_{l \in \mathbb N} 
\frac{(1-\ee^{-\lambda_l \Delta})^2}{\lambda_l}
\ee^{-\lambda_l J \Delta}
\delta_j e_l \delta_{j'} e_l,
\\
H_{J,K,j'}^{j} &=
\sum_{l \in \mathbb N} 
\biggl(
\frac{(1-\ee^{-\lambda_l \Delta})^2}{\lambda_l}
\ee^{-\lambda_l J \Delta}
\sum_{p = 1}^{K} v_{p-1} \ee^{2\lambda_l p \Delta}
\biggr)
\delta_j e_l \delta_{j'} e_l,
\quad J > 2K.
\end{align*}
By a simple calculation similar to that in the proof of Lemma 4.10 in \cite{TKU2025a},
we have
\begin{align*}
F_{j'}^{j} &= 
\OO \biggl( \Delta^{1/2} \Bigl(\Delta +\frac{1}{|j-j'|^2+1} \Bigr) \biggr),
\\
G_{J,j'}^{j} &= 
\OO \biggl( \frac{\Delta^{1/2}}{J+1} \Bigl(\Delta +\frac{1}{|j-j'|^2+1} \Bigr) \biggr)
\end{align*}
and 
\begin{equation*}
H_{J,K,j'}^{j} = \OO \biggl( \frac{\Delta^{1/2}}{J-2K+1} 
\Bigl(\Delta +\frac{1}{|j-j'|^2+1} \Bigr) \biggr).
\end{equation*}
In the same way as the proof of Lemma \ref{app_lem1}, we have
\begin{align*}
\cov[D_{i,j}X, D_{i',j'}X]
&= 
\OO \biggl( \frac{\Delta^{1/2}}{|i-i'|+1} 
\Bigl(\Delta +\frac{1}{|j-j'|^2+1} \Bigr) \biggr).
\end{align*}
Since it holds from $m^2 \Delta = \OO(1)$ that
\begin{align*}
\sum_{i,i'=1}^N \cov[D_{i,j} X, D_{i',j} X]^2
= \OO \biggl( \sum_{i,i'=1}^N \frac{\Delta}{|i-i'|^2+1} \biggr)
= \OO(1)
\end{align*}
and
\begin{align*}
&\sum_{j,j'=1}^m \sum_{i,i'=1}^N \cov[D_{i,j} X, D_{i',j'} X]^2
\\
&= 
\OO \Biggl( \Delta \sum_{i,i'=1}^N \frac{1}{|i-i'|^2+1} 
\bigg(
m^2 \Delta^2  + \sum_{j,j'=1}^m \frac{1}{|j-j'|^4+1} \biggr)
\Biggr)
\\
&= 
\OO \bigl( \Delta N (m^2 \Delta^2  + m) \bigr)
\\
&= \OO (m),
\end{align*}
we obtain the desired results in the same manner as in the proof of Lemma \ref{app_lem1}.
\end{proof}

\begin{proof}[\bf{Proof of Theorem \ref{app_th2}}]
In the same manner as the proof of Theorem \ref{app_th1}, 
it can be shown by Proposition \ref{app_prop2} and Lemma \ref{app_lem2}.
\end{proof}

\section{Appendix: Estimation for linear parabolic SPDEs in two space dimensions 
with volatility changes}\label{secB}
We consider parametric estimation for the linear parabolic SPDE 
\begin{equation*}
\dd X_t(y,z)
= \biggl\{ 
\theta_2 
\biggl( \frac{\pd^2}{\pd y^2} +\frac{\pd^2}{\pd z^2}\biggr)
+\theta_{1,1} \frac{\pd}{\pd y} +\theta_{1,2} \frac{\pd}{\pd z}
+ \theta_0 
\biggr\} X_t(y,z) \dd t
+\sigma(t) \dd W_t^{Q}(y,z)
\end{equation*}
for $(t,y,z) \in [0,1] \times (0,1)^2$
with an initial value $X_0$ and the Dirichlet boundary condition 
$X_t(y,z) = 0$, $(t,y,z) \in [0,1] \times \pd (0,1)^2$, where 
$\theta_0 \in \mathbb R$, 
$\theta_1 = (\theta_{1,1}, \theta_{1,2})^\TT \in \mathbb R^2$
and $\theta_2 \in (0, \infty)$ are unknown parameters,
the volatility function $\sigma(t)$ is characterized by
\begin{equation}\label{vol2}
\sigma(t) = \sum_{p = 1}^{r+1} \sigma_p \ind_{[\tau_{p-1}, \tau_{p})}(t)
\end{equation}
with $r \in \mathbb N$, 
$0 = \tau_0 < \tau_1 < \tau_2 < \cdots < \tau_r < \tau_{r+1} = 1$,
and $\sigma_p \in (0, \infty)$ for $p \in \{ 1,\ldots, r+1 \}$.
For convenience, we write $[\tau_r, \tau_{r+1}) = [\tau_r,1]$.
$\{ W_t^Q \}_{t \ge 0}$ is the $Q$-Wiener process 
in a Sobolev space on $(0,1)^2$ and is given by
\begin{equation*}
W_t^Q = \sum_{l \in \mathbb N^2} \gamma_l^{-\alpha/2} w_l(t) e_l
\end{equation*}
with an unknown damping parameter $\alpha \in (0,2)$,
independent real valued standard Brownian motions $\{ w_l \}_{l \in \mathbb N^2}$,
and $e_l(y,z) = e_{l_1}^{(1)}(y) e_{l_2}^{(2)}(z)$ for $l = (l_1,l_2) \in \mathbb N^2$,
where $\kappa = (\kappa_1, \kappa_2)^\TT$, $\kappa_j = \theta_{1,j}/\theta_2$
and $e_{l}^{(j)}(x) = \sqrt{2} \exp(-\kappa_j x/2) \sin(\pi l x)$.
We here set 
$\gamma_l = \lambda_l = \theta_2 \pi^2 |l|_2^2 
+\frac{|\theta_1|_2^2}{4 \theta_2} -\theta_0$ or $\gamma_l = \pi^2 |l|_2^2 +\mu_0$ 
with an unknown parameter $\mu_0 \in (-2\pi^2,\infty)$.
Let $[\underline{\alpha}, \overline{\alpha}] \subset (0,2)$ be 
the parameter space of $\alpha$
and we assume that the true value $\alpha^*$ belongs to $(\underline{\alpha}, \overline{\alpha})$.

Suppose that we have discrete observations 
$\mathbf X_{M_1,M_2,N} = \{ X_{t_i^N}(y_{j_1}^{(1)},y_{j_2}^{(2)}) \}_{
0 \le i \le N, 0 \le j_1 \le M_1, 0 \le j_2 \le M_2}$ with
\begin{equation*}
t_i^N = i \Delta = \frac{i}{N},
\quad
y_j^{(k)} = \frac{j}{M_k}, 
\quad k \in \{ 1,2 \}.
\end{equation*}
For $c \in (0,1/2)$, $m_k \in \{1, \ldots, M_k \}$, $k \in \{1,2\}$ 
and $n \in \{1,\ldots, N \}$, we will write the thinned data 
obtained from $\mathbf X_{M_1,M_2,N}$ as 
$\mathbf X_{m_1,m_2,n}^{(c)} = 
\{ X_{t_i^n}(\widetilde y_{j_1}^{(1)}, \widetilde y_{j_2}^{(2)}) \}_{
0 \le i \le n, 0 \le j_1 \le m_1, 0 \le j_2 \le m_2}$
with $m_1 = m_2$, $\delta = (1-2b)/m_1$
\begin{equation*}
t_i^n = i \cdot \frac{1}{N} \biggl\lfloor \frac{N}{n} \biggr\rfloor,
\quad
\widetilde y_j^{(k)} = c + j \cdot \delta, 
\end{equation*}
where $\widetilde y_j^{(k)} \in \{ y_0^{(k)}, \ldots, y_{M_k}^{(k)} \}$.

Let $ -A_\theta = \theta_2 
( \frac{\pd^2}{\pd y^2} +\frac{\pd^2}{\pd z^2})
+\theta_{1,1} \frac{\pd}{\pd y} +\theta_{1,2} \frac{\pd}{\pd z}
+ \theta_0$. 
While we can consider more general initial conditions such as [A1], 
for simplicity, we make the following condition.
\begin{description}
\item[{[D2]}]
The initial value $X_0 \in L^2((0,1)^2)$ is deterministic and 
$\| A_\theta^{(1+\overline \alpha)/2} X_0 \| < \infty$.
\end{description}

Let $b \in (0, 1/2)$. 
Suppose that we have two thinned data $\mathbf X_{m_1,m_2,N}^{(b)}$
and $\mathbf X_{m_1',m_2',N'}^{(b)}$ obtained from $\mathbf X_{M_1,M_2,N}$,
where $m_k' = m_k/2$, $m' = m_1' m_2'$, $N' = N/4$, 
$m := m_1 m_2 = \OO(N)$ and $N = \OO(m)$.
For the thinned data $\mathbf X_{m_1,m_2,N}^{(b)}$, we set the triple increments
\begin{align*}
T_{i,j,k} X 
&= \Delta_i^N X(\widetilde y_{j}^{(1)},\widetilde y_{k}^{(2)})
-\Delta_i^N X(\widetilde y_{j-1}^{(1)},\widetilde y_{k}^{(2)})
-\Delta_i^N X(\widetilde y_{j}^{(1)},\widetilde y_{k-1}^{(2)})
+\Delta_i^N X(\widetilde y_{j-1}^{(1)},\widetilde y_{k-1}^{(2)})
\\
&=
\sum_{l_1,l_2 \in \mathbb N} (x_{l_1,l_2}(t_i) -x_{l_1,l_2}(t_{i-1}))
(e_{l_1}^{(1)}(\widetilde y_{j}^{(1)}) -e_{l_1}^{(1)}(\widetilde y_{j-1}^{(1)}))
(e_{l_2}^{(2)}(\widetilde y_{k}^{(2)}) -e_{l_2}^{(2)}(\widetilde y_{k-1}^{(2)})).
\end{align*}
We define 
$\overline y_{j}^{(p)} = (\widetilde y_{j-1}^{(p)} +\widetilde y_j^{(p)})/2$,
$\overline y_{j,k} = (\overline y_{j}^{(1)}, \overline y_{k}^{(2)})^\TT$ and
\begin{equation*}
V = \int_0^1 \sigma^2(t) \dd t
= \sum_{p=1}^{r+1} \sigma_p^2 (\tau_{p} -\tau_{p-1}).
\end{equation*}
Let $J_0$ be the Bessel function of the first kind of order $0$: 
\begin{equation*}
J_0(x) = 1+\sum_{k=1}^\infty \frac{(-1)^k}{(k!)^2} \Bigl(\frac{x}{2} \Bigr)^{2k}.
\end{equation*}
For $r, \alpha > 0$, we set
\begin{equation*}
\psi_{r,\alpha}(\theta_2)
=\frac{2}{\theta_2 \pi}
\int_0^\infty 
\frac{1-\ee^{-x^2}}{x^{1+2\alpha}}
\biggl(
J_0\Bigl(\frac{\sqrt{2}r x}{\sqrt{\theta_2}}\Bigr)
-2J_0\Bigl(\frac{r x}{\sqrt{\theta_2}}\Bigr)+1
\biggr) \dd x.
\end{equation*}
For the SPDE with the time-dependent volatility function \eqref{vol2}, 
we then get the following proposition analogous to \cite{TKU2025a}, 
who considered the SPDE with a constant volatility.
\begin{prop}\label{app_prop3}
Let $\alpha \in (0,2)$ and $\delta/\sqrt{\Delta} \equiv r \in (0,\infty)$.
Assume that [D2] holds. Then, it holds that
\begin{equation*}
\frac{1}{N \Delta^\alpha} \sum_{i=1}^N \EE \bigl[ (T_{i,j,k} X)^2 \bigr]
= V \ee^{-\kappa^\TT \overline y_{j,k}} c_\gamma^\alpha \psi_{r,\alpha}(\theta_2)
+\OO(\Delta),
\end{equation*}
where $c_\gamma = \lim_{|l|_2 \to \infty} \frac{\gamma_l}{\pi^2 |l|_2^2}$.
\end{prop}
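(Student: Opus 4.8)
The plan is to follow the scheme of the proofs of Propositions~\ref{app_prop1} and \ref{app_prop2}, adapted to the two–dimensional triple increments and the damping factor $\gamma_l^{-\alpha/2}$. Writing $\delta_j e_{l_1}^{(1)} = e_{l_1}^{(1)}(\widetilde y_j^{(1)}) - e_{l_1}^{(1)}(\widetilde y_{j-1}^{(1)})$ and similarly $\delta_k e_{l_2}^{(2)}$, one has $T_{i,j,k}X = \sum_{l = (l_1,l_2) \in \mathbb N^2} (\Delta_i^N x_l)\, \delta_j e_{l_1}^{(1)}\, \delta_k e_{l_2}^{(2)}$. Decompose $\Delta_i^N x_l = A_{i,l} + B_{i,l} + C_{i,l}$ exactly as in Appendix~\ref{secA}, now with $\beta_l(s) = \sigma(s)\gamma_l^{-\alpha/2}$: $A_{i,l} = -\ee^{-\lambda_l(i-1)\Delta}(1-\ee^{-\lambda_l\Delta}) x_l(0)$ is deterministic, and $B_{i,l}$, $C_{i,l}$ are the stochastic integrals of $\beta_l$ over $[0,(i-1)\Delta]$ and $[(i-1)\Delta, i\Delta]$. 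Since $\{w_l\}_{l \in \mathbb N^2}$ are independent and $B_{i,l} + C_{i,l}$ is centred,
\[
\EE\bigl[(T_{i,j,k}X)^2\bigr]
= \Bigl( \sum_{l \in \mathbb N^2} A_{i,l}\, \delta_j e_{l_1}^{(1)} \delta_k e_{l_2}^{(2)} \Bigr)^{\!2}
+ \sum_{l \in \mathbb N^2} \bigl(\EE[B_{i,l}^2] + \EE[C_{i,l}^2]\bigr) (\delta_j e_{l_1}^{(1)})^2 (\delta_k e_{l_2}^{(2)})^2 .
\]

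First I would dispose of the initial-value contribution. By the Cauchy--Schwarz inequality, exactly as in the bound for $S_1$ in the proof of Proposition~\ref{app_prop2}, one has
\[
\Bigl( \sum_{l} A_{i,l}\, \delta_j e_{l_1}^{(1)}\delta_k e_{l_2}^{(2)} \Bigr)^{\!2}
\le \| A_\theta^{(1+\overline\alpha)/2} X_0 \|^2 \sum_{l} \lambda_l^{-(1+\overline\alpha)} (1-\ee^{-\lambda_l\Delta})^2 \ee^{-2(i-1)\lambda_l\Delta} (\delta_j e_{l_1}^{(1)})^2 (\delta_k e_{l_2}^{(2)})^2 ,
\]
and summing the geometric factor over $i$, using $1-\ee^{-\lambda_l\Delta} \lesssim \lambda_l\Delta$, the spatial-increment bound $(\delta_j e_{l_1}^{(1)})^2 \lesssim 1 \land l_1^2\Delta$, and [D2], shows that $\sum_{i=1}^N\bigl( \sum_l A_{i,l}\, \delta_j e_{l_1}^{(1)}\delta_k e_{l_2}^{(2)} \bigr)^{2}$ is of smaller order than $N\Delta^{\alpha+1}$ uniformly in $j,k$, hence negligible after division by $N\Delta^\alpha$.

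Next I would turn to the diagonal sum. Split $\{1,\dots,N\}$ into the blocks $(N_{p-1}, N_p]$, $p \in \{1,\dots,r+1\}$, on which $\sigma \equiv \sigma_p$ by \eqref{vol2}, and compute $\EE[B_{i,l}^2] + \EE[C_{i,l}^2]$ as in \eqref{eq-app-1}: with $v_{i-1} = \sigma_p^2$, one gets $\EE[B_{i,l}^2] + \EE[C_{i,l}^2] = \gamma_l^{-\alpha}\bigl( \tfrac{v_{i-1}(1-\ee^{-\lambda_l\Delta})}{\lambda_l} + s_{i,l} \bigr)$ with the same boundary correction $s_{i,l}$ as in \eqref{eq-app-1}. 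The estimate \eqref{eq-app-2}, multiplied by the harmless factors $\gamma_l^{-\alpha}(\delta_j e_{l_1}^{(1)})^2(\delta_k e_{l_2}^{(2)})^2$, gives $\sum_{i=N_{p-1}+1}^{N_p} \sum_{l} \gamma_l^{-\alpha} |s_{i,l}| (\delta_j e_{l_1}^{(1)})^2(\delta_k e_{l_2}^{(2)})^2 = \OO(\Delta^\alpha)$, which is again negligible after averaging. It therefore remains to evaluate the $i$-independent spectral sum
\[
\Sigma_{j,k} := \sum_{l \in \mathbb N^2} \frac{\gamma_l^{-\alpha}(1-\ee^{-\lambda_l\Delta})}{\lambda_l} (\delta_j e_{l_1}^{(1)})^2(\delta_k e_{l_2}^{(2)})^2 .
\]
Using the product structure $e_l = e_{l_1}^{(1)} e_{l_2}^{(2)}$, the identity $-2\sin a \sin b = \cos(a+b) - \cos(a-b)$, $\gamma_l \sim c_\gamma \pi^2 |l|_2^2$, $\lambda_l \sim \theta_2 \pi^2 |l|_2^2$ and $1-\ee^{-\lambda_l\Delta} \sim 1 \land \lambda_l\Delta$, the sum $\Sigma_{j,k}$ is precisely the spectral quantity analysed for constant volatility in \cite{TKU2025a} (with $\gamma_l$ in place of $\lambda_l$; the general $\gamma_l$ being treated in \cite{TKU2025arXiv1}): after the change of variables $l_k \mapsto l_k\sqrt\Delta$, the approximation of the double sum by a double integral, the emergence of the Bessel kernel $J_0(\sqrt2 rx/\sqrt{\theta_2}) - 2 J_0(rx/\sqrt{\theta_2}) + 1$ from the oscillatory factors, and the control of the Riemann-sum error by the estimates in \cite{Hildebrandt_Trabs2021} (Lemmas~A.8--A.9 there), one obtains $\Sigma_{j,k} = \Delta^\alpha \ee^{-\kappa^\TT \overline y_{j,k}} c_\gamma^\alpha \psi_{r,\alpha}(\theta_2) + \OO(\Delta^{\alpha+1})$ uniformly in $j,k$. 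Collecting the terms and using $\sum_{p=1}^{r+1}(N_p - N_{p-1})\sigma_p^2 = N\sum_{p=1}^{r+1}(\tau_p - \tau_{p-1})\sigma_p^2 = NV$ then yields
\[
\frac{1}{N\Delta^\alpha} \sum_{i=1}^N \EE\bigl[(T_{i,j,k}X)^2\bigr]
= \frac{1}{N}\sum_{p=1}^{r+1}(N_p - N_{p-1})\sigma_p^2\, \ee^{-\kappa^\TT \overline y_{j,k}} c_\gamma^\alpha \psi_{r,\alpha}(\theta_2) + \OO(\Delta)
= V \ee^{-\kappa^\TT \overline y_{j,k}} c_\gamma^\alpha \psi_{r,\alpha}(\theta_2) + \OO(\Delta).
\]

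The main obstacle is the asymptotic evaluation of $\Sigma_{j,k}$ with the explicit constant $c_\gamma^\alpha\psi_{r,\alpha}(\theta_2)$ and the $\OO(\Delta^{\alpha+1})$ error uniform in $j,k$: because of the damping $\alpha$ and the general sequence $\gamma_l$, the summand decays only like $|l|_2^{-2(1+\alpha)}$ and the spatial-increment factors oscillate, so the passage from the double sum to the Bessel integral needs the Euler--Maclaurin / Poisson-summation type estimates of \cite{Hildebrandt_Trabs2021, TKU2025a}. This part is, however, identical to the constant-volatility case treated there; the only genuinely new ingredients relative to \cite{TKU2025a} are the block decomposition of the time integral at the change points $\tau_1,\dots,\tau_r$ and the accompanying bound on the boundary corrections $s_{i,l}$, both of which are routine extensions of \eqref{eq-app-1}--\eqref{eq-app-2}.
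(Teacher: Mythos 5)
Your proposal is correct and follows essentially the same route as the paper's proof: the decomposition $\Delta_i^N x_l = A_{i,l}+B_{i,l}+C_{i,l}$, separation of $\EE[(T_{i,j,k}X)^2]$ into the squared initial-value term and the diagonal variance sum, the block decomposition at the change points with the boundary corrections $s_{i,l}$ controlled as in \eqref{eq-app-1}--\eqref{eq-app-2}, and delegation of the spectral evaluation $F_{j,k}=\Delta^\alpha \ee^{-\kappa^\TT\overline y_{j,k}} c_\gamma^\alpha\psi_{r,\alpha}(\theta_2)+\OO(\Delta^{1+\alpha})$ to the constant-volatility results of \cite{TKU2025a} (and \cite{Hildebrandt_Trabs2021}), exactly as the paper does via (4.3), (4.5) and Lemma 4.2 there. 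The final averaging over the blocks with $\sum_p (N_p-N_{p-1})\sigma_p^2 = NV$ matches the paper's conclusion.
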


Let $T_{i,j,k}' X$ be the triple increments 
obtained from $\mathbb X_{m_1', m_2', N'}^{(b)}$.
In the same manner as \cite{TKU2025arXiv1},
we define the estimator of the damping parameter $\alpha$ as follows.
\begin{equation*}
\widehat \alpha = 
\log \left(
\frac{
\displaystyle \frac{1}{m' N'} 
\sum_{k=1}^{m_2'} \sum_{j=1}^{m_1'} \sum_{i=1}^{N'} (T_{i,j,k}' X)^2
}{\displaystyle \frac{1}{m N} 
\sum_{k=1}^{m_2} \sum_{j=1}^{m_1} \sum_{i=1}^N (T_{i,j,k} X)^2}
\right)/\log(4).
\end{equation*}

For the SPDE with the volatility function \eqref{vol2},
we  obtain the following result 
analogous to \cite{TKU2025arXiv1}, who provided an estimator 
for the damping parameter $\alpha$ for the SPDE with a constant volatility.
\begin{thm}\label{app_th3}
Assume that [D2] holds. Then, it holds that
\begin{equation*}
\sqrt{m N} (\widehat \alpha -\alpha^*)= \Op(1).
\end{equation*}
\end{thm}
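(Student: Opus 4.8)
The plan is to adapt the argument of \cite{TKU2025arXiv1}. Set
\begin{equation*}
A_N = \frac{1}{m N}\sum_{k=1}^{m_2}\sum_{j=1}^{m_1}\sum_{i=1}^N (T_{i,j,k}X)^2,
\qquad
B_N = \frac{1}{m' N'}\sum_{k=1}^{m_2'}\sum_{j=1}^{m_1'}\sum_{i=1}^{N'} (T_{i,j,k}'X)^2,
\end{equation*}
so that $\widehat\alpha = \log(B_N/A_N)/\log 4$, and control $A_N$ and $B_N$ separately through the decomposition into their means and fluctuations. First I would record that $N' = N/4$ and $m_k' = m_k/2$ give $\Delta' = 4\Delta$ and $\delta' = 2\delta$, hence $\delta'/\sqrt{\Delta'} = \delta/\sqrt{\Delta} \equiv r$, so that Proposition \ref{app_prop3} is available for both $A_N$ and $B_N$ with the same $r$.

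For the means, Proposition \ref{app_prop3} gives
\begin{equation*}
\EE[A_N] = \Delta^{\alpha^*}\Bigl( \frac{1}{m}\sum_{k=1}^{m_2}\sum_{j=1}^{m_1} V \ee^{-\kappa^{*\TT}\overline y_{j,k}}\, c_\gamma^{\alpha^*}\,\psi_{r,\alpha^*}(\theta_2^*) + \OO(\Delta) \Bigr).
\end{equation*}
Since the $\overline y_{j,k}$ are the midpoints of a uniform partition of $[b,1-b]^2$ and $y\mapsto\ee^{-\kappa^{*\TT}y}$ is smooth, the midpoint quadrature rule replaces the double sum by a strictly positive constant $\mu_\infty$ up to an error $\OO(\delta^2)$; because $\delta^2 \asymp \Delta = 1/N$ and $m \asymp N$, this yields $\EE[A_N] = \Delta^{\alpha^*}\mu_\infty(1+\OO(1/N))$ and, identically, $\EE[B_N] = (4\Delta)^{\alpha^*}\mu_\infty(1+\OO(1/N))$. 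Hence $\EE[B_N]/\EE[A_N] = 4^{\alpha^*}(1+\OO(1/N))$, and therefore $\log(\EE[B_N]/\EE[A_N])/\log 4 = \alpha^* + \OO(1/N)$.

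The fluctuation estimate is the heart of the proof and the main obstacle: I need a covariance bound, analogous to Lemmas \ref{app_lem1} and \ref{app_lem2}, ensuring
\begin{equation*}
\VV\Bigl[ \sum_{k=1}^{m_2}\sum_{j=1}^{m_1}\sum_{i=1}^N (T_{i,j,k}X)^2 \Bigr] = \OO(m N \Delta^{2\alpha^*}),
\end{equation*}
and the same for $B_N$. As in the one-dimensional Lemmas and in \cite{TKU2025arXiv1,TKU2025a}, this is obtained by decomposing $\Delta_i^N x_l = A_{i,l}+B_{i,l}+C_{i,l}$, expressing $\cov[T_{i,j,k}X, T_{i',j',k'}X]$ through the resulting spectral sums, exploiting the $\OO(1/(|i-i'|+1))$ temporal decay together with the $\OO((|j-j'|^2+1)^{-1}(|k-k'|^2+1)^{-1})$ spatial decay (the latter via the Bessel-type expansions as in the proof of Lemma \ref{app_lem1}), and summing these against the grid while tracking the powers of $\Delta$ with spectral-sum estimates of the Lemma \ref{lem6} type adapted to the damping exponent $\alpha^*$. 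The delicate points will be keeping the exponent uniform over the admissible range $[\underline\alpha,\overline\alpha]$ and controlling the extra terms generated by the deterministic initial contribution $A_{i,l}$, which --- exactly as in Proposition \ref{app_prop3} --- only produce lower-order remainders.

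Granting this bound, $\VV[A_N] = \OO(\Delta^{2\alpha^*}/(mN))$ while $\EE[A_N]\asymp\Delta^{\alpha^*}$, so Chebyshev's inequality gives $(A_N-\EE[A_N])/\EE[A_N] = \Op(1/\sqrt{mN})$, hence also $A_N/\EE[A_N]\pto 1$; the same holds for $B_N$. A first-order expansion of the logarithm then yields $\log(A_N/\EE[A_N]) = \Op(1/\sqrt{mN})$ and $\log(B_N/\EE[B_N]) = \Op(1/\sqrt{mN})$, so that, combining with the mean computation,
\begin{equation*}
\widehat\alpha-\alpha^* = \frac{1}{\log 4}\Bigl(\log\frac{\EE[B_N]}{\EE[A_N]} - \alpha^*\log 4\Bigr) + \frac{1}{\log 4}\Bigl(\log\frac{B_N}{\EE[B_N]} - \log\frac{A_N}{\EE[A_N]}\Bigr) = \OO(1/N) + \Op\Bigl(\frac{1}{\sqrt{mN}}\Bigr).
\end{equation*}
Since $m = \OO(N)$ forces $1/N = \OO(1/\sqrt{mN})$, we conclude $\sqrt{mN}(\widehat\alpha-\alpha^*) = \Op(1)$.
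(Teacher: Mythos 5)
Your proposal is correct and follows essentially the same route as the paper: means via Proposition \ref{app_prop3} (with the observation $\delta'/\sqrt{\Delta'}=r$), a variance bound of order $\OO(mN\Delta^{2\alpha^*})$ for the sums of squared triple increments, and a ratio-plus-logarithm expansion, with $m=\OO(N)$ absorbing the $\OO(\Delta)$ bias. The covariance bound you single out as the main obstacle is exactly the paper's Lemma \ref{app_lem3}, proved there by the same $A_{i,l}+B_{i,l}+C_{i,l}$ decomposition and temporal/spatial decay estimates you sketch, so no genuine gap remains.
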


Let $\widetilde T_{i,j,k} X = T_{i,j,k} X +T_{i+1,j,k} X$, 
$\nu = (\kappa, \theta_2, V)$ and 
$f_{r,\alpha}(y;\nu) = V \ee^{-\kappa^\TT y} c_\gamma^\alpha \psi_{r,\alpha}(\theta_2)$.
For $\alpha \in (0,2)$ and $\delta/\sqrt{\Delta} \equiv r \in (0,\infty)$, we define
\begin{align*}
K_{m,N}(\nu;\alpha) &= 
\frac{1}{m} \sum_{k=1}^{m_2} \sum_{j=1}^{m_1} 
\biggl(
\frac{1}{N \Delta^\alpha}
\sum_{i=1}^N (T_{i,j,k} X)^2 -f_{r,\alpha}(\overline y_{j,k};\nu)
\biggr)^2
\\
&\qquad+\frac{1}{m} \sum_{k=1}^{m_2} \sum_{j=1}^{m_1} 
\biggl(
\frac{1}{N (2\Delta)^\alpha}
\sum_{i=1}^{N-1} 
(\widetilde T_{i,j,k} X)^2 -f_{r/\sqrt{2},\alpha}(\overline y_{j,k};\nu)
\biggr)^2.
\end{align*}
Note that the target parameters are $(\kappa, \theta_2, \int_0^1 \sigma(t)^2 \dd t)$,
rather than $(\kappa, \theta_2, \sigma^2)$ as in \cite{TKU2025arXiv1},
which considered SPDEs with the constant volatility $\sigma$.
Let $\Xi$ be the parameter space of $\nu$ and a compact convex subset of 
$\mathbb R^2 \times (-\frac{r^2}{8 \log(\sqrt{2}-1)},\infty) \times(0,\infty)$.
We assume that the true value 
$\nu^* = (\kappa^*,\theta_2^*,V^*)$ belongs to $\mathrm{Int}(\Xi)$.
For the estimator $\widehat \alpha$, we define the estimator of $\nu$ by
\begin{equation*}
\widehat \nu = \underset{\nu \in \Xi}{\mathrm{argmin}}\,K_{m,N}(\nu;\widehat \alpha).
\end{equation*}
We then get the following result.
\begin{thm}\label{app_th4}
Assume that [D2] holds. Then, it holds that
\begin{equation*}
\frac{\sqrt{m N}}{\log(N)} (\widehat \nu -\nu^*)= \Op(1).
\end{equation*}
\end{thm}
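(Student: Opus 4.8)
The plan is to follow the $M$-estimation template used in the proofs of Theorems \ref{app_th1} and \ref{app_th3}, now with $\widehat\alpha$ entering as an estimated nuisance parameter. The analytic inputs are Proposition \ref{app_prop3} (the bias expansion of $\frac{1}{N\Delta^{\alpha}}\sum_i \EE[(T_{i,j,k}X)^2]$) and a covariance estimate for $(T_{i,j,k}X)^2$ of the same shape as Lemmas \ref{app_lem1} and \ref{app_lem2}, namely that $\sum_{i,i'}\cov[(T_{i,j,k}X)^2,(T_{i',j',k'}X)^2]$ summed over $(j,k,j',k')$ is $\OO(m)$; both follow for the step-function volatility \eqref{vol2} by the arguments of \cite{TKU2025a} adapted exactly as in the proofs of Propositions \ref{app_prop1}, \ref{app_prop2} and Lemmas \ref{app_lem1}, \ref{app_lem2}. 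First I would establish consistency: introduce $h_{r,\alpha}(y;\nu) = f_{r,\alpha}(y;\nu) - f_{r,\alpha^*}(y;\nu^*)$ and the limiting contrast
\begin{equation*}
U(\nu) = \frac{1}{(1-2b)^2} \int_{[b,1-b]^2} \big( h_{r,\alpha^*}(y;\nu)^2 + h_{r/\sqrt 2,\alpha^*}(y;\nu)^2 \big)\,\dd y,
\end{equation*}
which is minimised uniquely at $\nu = \nu^*$; using the bias expansion, the covariance estimate, the consistency $\widehat\alpha \pto \alpha^*$ from Theorem \ref{app_th3}, and continuity of $(\nu,\alpha)\mapsto f_{r,\alpha}(y;\nu)$, deduce $\sup_{\nu\in\Xi}|K_{m,N}(\nu;\widehat\alpha) - U(\nu)| \pto 0$, hence $\widehat\nu \pto \nu^*$ and $\widehat\nu \in \mathrm{Int}(\Xi)$ with probability tending to one.

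On that event, the first-order condition $\pd_\nu K_{m,N}(\widehat\nu;\widehat\alpha) = 0$ and the mean value theorem in $\nu$ (holding $\alpha = \widehat\alpha$ fixed) give
\begin{equation*}
-\frac{\sqrt{mN}}{\log(N)}\,\pd_\nu K_{m,N}(\nu^*;\widehat\alpha)^\TT = \left( \int_0^1 \pd_\nu^2 K_{m,N}\big( \nu^* + u(\widehat\nu - \nu^*);\widehat\alpha \big)\,\dd u \right) \frac{\sqrt{mN}}{\log(N)}(\widehat\nu - \nu^*),
\end{equation*}
so it suffices to prove that the averaged Hessian converges in probability to a positive definite matrix and that $\frac{\sqrt{mN}}{\log(N)}\pd_\nu K_{m,N}(\nu^*;\widehat\alpha)^\TT = \Op(1)$. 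For the Hessian, a uniform-in-$(\nu,\alpha)$ law of large numbers over shrinking neighbourhoods of $(\nu^*,\alpha^*)$ — using that $\frac{1}{mN\Delta^{\widehat\alpha}}\sum_{j,k,i}(T_{i,j,k}X)^2$ is bounded in probability — identifies the limit as $L(\nu^*,\alpha^*) = 2\int_{[b,1-b]^2}\big(\pd_\nu f_{r,\alpha^*}^\TT \pd_\nu f_{r,\alpha^*} + \pd_\nu f_{r/\sqrt 2,\alpha^*}^\TT \pd_\nu f_{r/\sqrt 2,\alpha^*}\big)(y;\nu^*)\,\dd y$, whose positive definiteness follows as in \cite{TKU2025arXiv1} because the two scales $r$ and $r/\sqrt 2$ jointly identify all three components of $\nu$.

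For the gradient, split $\pd_\nu K_{m,N}(\nu^*;\widehat\alpha) = \pd_\nu K_{m,N}(\nu^*;\alpha^*) + \big( \pd_\nu K_{m,N}(\nu^*;\widehat\alpha) - \pd_\nu K_{m,N}(\nu^*;\alpha^*) \big)$. The first term is handled exactly as in Theorem \ref{app_th1}: its centred part has variance $\OO(1/(mN))$ by the covariance estimate, and its deterministic part is $\OO(\Delta)$ by Proposition \ref{app_prop3}, so $\frac{\sqrt{mN}}{\log(N)}$ times it is $\op(1)$ under $m = \OO(N)$. The second term carries the $\log(N)$: the $\widehat\alpha$-dependence enters through $\Delta^{-\widehat\alpha} = \Delta^{-\alpha^*}\exp\big( (\widehat\alpha - \alpha^*)\log(N) \big)$ and through the bounded $\alpha$-derivatives of $c_\gamma^{\alpha}$, $\psi_{r,\alpha}(\theta_2)$ and $f_{r,\alpha}$; on the event $\{ |\widehat\alpha - \alpha^*| \le \epsilon_{m,N} \}$ with $\epsilon_{m,N} \downarrow 0$ one has $\big| \exp((\widehat\alpha - \alpha^*)\log(N)) - 1 \big| \lesssim |\widehat\alpha - \alpha^*| \log(N)$, and since $\widehat\alpha - \alpha^* = \Op(1/\sqrt{mN})$ from Theorem \ref{app_th3} while the accompanying sums (e.g. $\frac{1}{m}\sum_{j,k}(\frac{1}{N\Delta^{\alpha^*}}\sum_i(T_{i,j,k}X)^2)\pd_\nu f$) are $\Op(1)$, this difference is $\Op(\log(N)/\sqrt{mN})$, hence $\frac{\sqrt{mN}}{\log(N)}$ times it is $\Op(1)$. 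Combining with the Hessian convergence and inverting yields $\frac{\sqrt{mN}}{\log(N)}(\widehat\nu - \nu^*) = \Op(1)$.

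I expect the main obstacle to be the uniform bookkeeping of the estimated damping parameter: one must verify that multiplying the leading bounded-in-probability quantities by $\Delta^{-(\widehat\alpha - \alpha^*)}$ produces an error of order exactly $\log(N)/\sqrt{mN}$ — and no worse — uniformly in the spatial index $(j,k)$ and in $\nu$ over a neighbourhood of $\nu^*$, for both the gradient and the Hessian, and that the perturbed averaged Hessian remains positive definite in the limit so that the $\log(N)$-normalisation, rather than a larger one, is sharp. Once the covariance estimate and Proposition \ref{app_prop3} are in hand, the remaining work is the propagation-of-error argument above, patterned on the proofs of Theorems \ref{app_th1} and \ref{app_th3}.
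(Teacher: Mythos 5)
Your proposal follows essentially the same route as the paper's proof: the mean value theorem in $\nu$ with $\widehat\alpha$ held fixed, consistency via uniform convergence of $K_{m,N}(\cdot;\widehat\alpha)$ to a limiting contrast minimised at $\nu^*$, and the splitting of the contrast, gradient and Hessian into an $\alpha^*$-part (treated as in Theorem \ref{app_th1}, using Proposition \ref{app_prop3} and Lemma \ref{app_lem3}) plus a $\widehat\alpha$-versus-$\alpha^*$ difference whose $\Delta^{-(\widehat\alpha-\alpha^*)} = \exp((\widehat\alpha-\alpha^*)\log N)$ factor, combined with $\widehat\alpha-\alpha^*=\Op(1/\sqrt{mN})$ from Theorem \ref{app_th3}, produces exactly the $\log(N)$ loss — this is the paper's $G_j+H_j$ decomposition. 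The only cosmetic slip is that $\nu=(\kappa,\theta_2,V)$ has four scalar components (the Hessian limit is a positive definite $4\times 4$ matrix), not three, but this does not affect the argument.
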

This result holds under both $H_0$ and $H_1$ of \eqref{HTP}.

\subsection{Proofs}
For simplicity, we assume that for any $j \in \{ 1, \ldots, r \}$, 
there exists $N_j \in \{ 1, \ldots, N \}$ such that $\tau_j = N_j \Delta$.

Since it holds that 
\begin{equation*}
x_l(t) = \ee^{-\lambda_l t} x_l(0)
+\frac{1}{\gamma_l^{\alpha/2}} 
\int_0^t \sigma(s) \ee^{-\lambda_l(t-s)} \dd w_l(s),
\end{equation*}
we have
\begin{align*}
\Delta_i^N x_l 
&= -\ee^{-\lambda_l (i-1)\Delta} (1-\ee^{-\lambda_l \Delta})x_l(0)
\\
&\quad-
\frac{1-\ee^{-\lambda_l \Delta}}{\gamma_l^{\alpha/2}} 
\int_0^{(i-1)\Delta} \sigma(s) \ee^{-\lambda_l((i-1)\Delta -s)} \dd w_l(s)
\\
&\quad+\frac{1}{\gamma_l^{\alpha/2}}
\int_{(i-1)\Delta}^{i \Delta} \sigma(s)
\ee^{-\lambda_l(i \Delta-s)} \dd w_l(s)
\\
&=: A_{i,l} +B_{i,l} +C_{i,l}.
\end{align*}
Let $p \in \{1,\ldots, r+1 \}$ and $v_i = (\sigma(i \Delta))^2$. 
For $N_{p-1} +1 \le i \le N_p$, we have
\begin{align*}
\EE[B_{i,l}^2] 
&= 
\frac{(1-\ee^{-\lambda_l \Delta})^2}{2\lambda_l \gamma_l^{\alpha}} 
(1-\ee^{-2\lambda_l \Delta}) \ee^{-2\lambda_l(i-1) \Delta} 
\sum_{k=1}^{N_{p-1}} v_{k-1} \ee^{2\lambda_l k \Delta}
\\
&\quad 
+\frac{v_{i-1}(1-\ee^{-\lambda_l \Delta})^2}{2\lambda_l \gamma_l^{\alpha}}
(1-\ee^{-2\lambda_l (i-1 -N_{p-1}) \Delta}),
\\
\EE[C_{i,l}^2] 
&= \frac{v_{i-1}(1-\ee^{-2\lambda_l \Delta})}{2\lambda_l\gamma_l^{\alpha}}.
\end{align*}
We thus obtain
\begin{align*}
\EE[B_{i,l}^2] + \EE[C_{i,l}^2] =
\frac{v_{i-1}(1-\ee^{-\lambda_l \Delta})}{\lambda_l \gamma_l^{\alpha}}
+s_{i,l},
\end{align*}
where
\begin{equation*}
s_{i,l} = 
\frac{\ee^{-2 \lambda_l (i-1)\Delta}(1-\ee^{-\lambda_l \Delta})^2}
{2\lambda_l\gamma_l^{\alpha}}
\biggl(-v_{i-1} \ee^{2\lambda_l N_{p-1} \Delta} 
+(1-\ee^{-2\lambda_l \Delta})
\sum_{k=1}^{N_{p-1}} v_{k-1} \ee^{2\lambda_l k \Delta}
\biggr)
\end{equation*}
and 
\begin{align*}
\sum_{i=N_{p-1}+1}^{N_p} \sum_{l \in \mathbb N^2} |s_{i,l}| 
&\lesssim
\sum_{l \in \mathbb N^2}
\frac{1-\ee^{-\lambda_l \Delta}}{\lambda_l \gamma_l^\alpha}.
\end{align*}

\begin{proof}[\bf{Proof of Proposition \ref{app_prop3}}]
For $(j,k) \in \{1,\ldots,m_1\} \times \{1,\ldots,m_2\}$
and $l = (l_1,l_2) \in \mathbb N^2$, we set
\begin{equation*}
\delta_{j,k} e_l = 
(e_{l_1}^{(1)}(\widetilde y_j^{(1)}) -e_{l_1}^{(1)}(\widetilde y_{j-1}^{(1)}))
(e_{l_2}^{(2)}(\widetilde y_k^{(2)}) -e_{l_2}^{(2)}(\widetilde y_{k-1}^{(2)})).
\end{equation*}
Since we have
\begin{equation*}
T_{i,j,k} X = \sum_{l \in \mathbb N^2} \Delta_i^N x_l \delta_{j,k} e_l,
\end{equation*}
we see from the proof of (4.3) in \cite{TKU2025a}
that for $N_{p-1} +1 \le i \le N_p$, 
\begin{align*}
\EE \bigl[ (T_{i,j,k} X)^2 \bigr]
&= 
\biggl(
\sum_{l \in \mathbb N^2} A_{i,l} \delta_{j,k} e_l
\biggr)^2
+\sum_{l \in \mathbb N^2} 
\bigl(\EE[B_{i,l}^2] +\EE[C_{i,l}^2] \bigr) (\delta_{j,k} e_l)^2
\\
&=: S_1 +S_2,
\end{align*}
where
\begin{equation*}
S_1 \le \| A_{\theta}^{(1+\alpha)/2} X_0 \|^2 G_{i,j,k},
\quad
\sum_{i=1}^{N} G_{i,j,k} \le F_{j,k}
:= \sum_{l \in \mathbb N^2} 
\frac{1-\ee^{-\lambda_l \Delta}}{\lambda_l \gamma_l^\alpha} (\delta_{j,k} e_l)^2
\end{equation*}
and 
\begin{equation*}
S_2 = v_{i-1} F_{j,k} +H_{i,j,k},
\quad
\sum_{i=N_{p-1}+1}^{N_p} H_{i,j,k} \lesssim F_{j,k}.
\end{equation*}
Since it follows from (4.5) and Lemma 4.2 in \cite{TKU2025a} that
\begin{equation*}
F_{j,k} = \Delta^\alpha 
\ee^{-\kappa^\TT \overline y_{j,k}} c_\gamma^\alpha \psi_{r,\alpha}(\theta_2)
+ \OO(\Delta^{1+\alpha}),
\end{equation*}
we have
\begin{equation*}
\EE \bigl[ (T_{i,j,k} X)^2 \bigr] 
= \Delta^\alpha v_{i-1} 
\ee^{-\kappa^\TT \overline y_{j,k}} c_\gamma^\alpha \psi_{r,\alpha}(\theta_2)
+ R_{i,j,k} 
+ \OO(\Delta^{1+\alpha}),
\end{equation*}
where
\begin{equation*}
\sum_{i= N_{p-1}+1}^{N_p} R_{i,j,k} 
= \OO(\Delta^\alpha)
\end{equation*}
uniformly in $j,k$. Therefore, we obtain
\begin{align*}
\frac{1}{N \Delta^\alpha} \sum_{i=1}^N \EE \bigl[ (T_{i,j,k} X)^2 \bigr]
&= \frac{1}{N \Delta^\alpha} 
\sum_{p=1}^{r+1} 
\sum_{i=N_{p-1}+1}^{N_p} 
\EE \bigl[ (T_{i,j,k} X)^2 \bigr]
\\
&= 
\frac{1}{N} 
\sum_{p=1}^{r+1} \sum_{i=N_{p-1}+1}^{N_p} 
v_{i-1} \ee^{-\kappa^\TT \overline y_{j,k}} c_\gamma^\alpha \psi_{r,\alpha}(\theta_2)
\\
&\qquad+
\frac{1}{N \Delta^\alpha} 
\sum_{p=1}^{r+1} \sum_{i=N_{p-1}+1}^{N_p} R_{i,j,k} +\OO(\Delta)
\\
&= 
\ee^{-\kappa^\TT \overline y_{j,k}} c_\gamma^\alpha \psi_{r,\alpha}(\theta_2)
\sum_{p=1}^{r+1} 
\frac{N_p-N_{p-1}}{N} \sigma_p^2 
+\OO(\Delta)
\\
&= 
\ee^{-\kappa^\TT \overline y_{j,k}} c_\gamma^\alpha \psi_{r,\alpha}(\theta_2)
\sum_{p=1}^{r+1} (\tau_p -\tau_{p-1}) \sigma_p^2 
+\OO(\Delta)
\\
&= 
V \ee^{-\kappa^\TT \overline y_{j,k}} c_\gamma^\alpha \psi_{r,\alpha}(\theta_2)
+\OO(\Delta).
\end{align*}
\end{proof}

For linear parabolic SPDEs with the volatility function \eqref{vol2}, 
we obtain the same result as Lemma 4.11 in \cite{TKU2025a}.
\begin{lem}\label{app_lem3}
Let $\alpha \in (0,2)$ and $\delta = r \sqrt{\Delta}$. Under [D2], it holds that
\begin{align*}
\cov[T_{i,j,k}X, T_{i',j,k}X]
&= 
\OO \biggl( \frac{\Delta^\alpha}{|i-i'|+1} 
\Bigl(\Delta +\frac{1}{(|j-j'|+1)(|k-k'|+1)} \Bigr) \biggr)
\\
&\quad + \OO \biggl( \frac{\Delta^{1/2+\alpha}}{|i-i'|+1} 
\Bigl( \ind_{\{ j \neq j'\}} \frac{1}{|j-j'|+1} 
+\ind_{\{ k \neq k'\}} \frac{1}{|k-k'|+1} \Bigr) \biggr),
\end{align*}
\begin{equation*}
\sum_{i,i'=1}^N \cov[(T_{i,j,k} X)^2, (T_{i',j,k} X)^2]
= \OO(N \Delta^{2\alpha})
\quad \text{uniformly in } j,k,
\end{equation*}
\begin{equation*}
\sum_{k,k'=1}^{m_2} \sum_{j,j'=1}^{m_1} \sum_{i,i'=1}^N
\cov[(T_{i,j,k} X)^2, (T_{i',j',k'} X)^2]
= \OO(m N \Delta^{2\alpha}).
\end{equation*}
\end{lem}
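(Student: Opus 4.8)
The plan is to follow the proof of Lemma 4.11 in \cite{TKU2025a}, which treats the constant-volatility case, and to verify that the only new ingredient --- the step-function volatility \eqref{vol2} --- enters solely through remainder terms that are already controlled by the estimates established just above this subsection. First I would use the decomposition $\Delta_i^N x_l = A_{i,l}+B_{i,l}+C_{i,l}$ set up above, together with $T_{i,j,k}X = \sum_{l\in\mathbb N^2}\Delta_i^N x_l\,\delta_{j,k}e_l$ and the independence of $\{w_l\}_{l\in\mathbb N^2}$, to write $\cov[T_{i,j,k}X,T_{i',j',k'}X] = \sum_{l\in\mathbb N^2}\cov[B_{i,l}+C_{i,l},\,B_{i',l}+C_{i',l}]\,\delta_{j,k}e_l\,\delta_{j',k'}e_l$, the deterministic part $A_{i,l}$ dropping out of the covariance. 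The mixed covariances $\EE[B_{i,l}B_{i',l}]$, $\EE[B_{i,l}C_{i',l}]$, $\EE[C_{i,l}C_{i',l}]$ are computed exactly as in the proof of Lemma \ref{app_lem1}, now carrying the extra factor $\gamma_l^{-\alpha}$; for $i=i'$ this gives $\EE[B_{i,l}^2]+\EE[C_{i,l}^2] = v_{i-1}(1-\ee^{-\lambda_l\Delta})/(\lambda_l\gamma_l^\alpha)+s_{i,l}$ as recorded above, and for $i\neq i'$ a sum of analogous terms carrying the extra decay $\ee^{-\lambda_l(|i-i'|-1)\Delta}$.

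Next I would introduce the two-dimensional analogues of the auxiliary sums, namely $F_{j',k'}^{j,k}=\sum_l\frac{1-\ee^{-\lambda_l\Delta}}{\lambda_l\gamma_l^\alpha}\delta_{j,k}e_l\,\delta_{j',k'}e_l$, $G_{J;j',k'}^{j,k}=\sum_l\frac{(1-\ee^{-\lambda_l\Delta})^2}{\lambda_l\gamma_l^\alpha}\ee^{-\lambda_lJ\Delta}\delta_{j,k}e_l\,\delta_{j',k'}e_l$, and $H_{J,K;j',k'}^{j,k}$ defined with the additional weight $\sum_{p=1}^K v_{p-1}\ee^{2\lambda_l p\Delta}$ for $J>2K$. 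Expanding each $\delta_{j,k}e_l$ into a product of increments of $e_{l_1}^{(1)}$ and $e_{l_2}^{(2)}$ and applying the product-to-sum identity $-2\sin a\sin b=\cos(a+b)-\cos(a-b)$ in each coordinate reduces these to twofold trigonometric sums, which are estimated exactly as in the proof of Lemma 4.10 in \cite{TKU2025a} (the $d=2$ counterpart of Lemmas A.8 and A.9 in \cite{Hildebrandt_Trabs2021}). The key numerical input is $\sum_l\frac{1-\ee^{-\lambda_l\Delta}}{\lambda_l\gamma_l^\alpha}=\OO(\Delta^\alpha)$ for $\alpha\in(0,2)$, the $d=2$, $\gamma_l\sim|l|_2^2$ version of Lemma \ref{lem6}; this yields $F_{j',k'}^{j,k}=\OO(\Delta^{1+\alpha})+\OO(\Delta^{1/2+\alpha}[\ind_{\{j\neq j'\}}/(|j-j'|+1)+\ind_{\{k\neq k'\}}/(|k-k'|+1)])+\OO(\Delta^\alpha/[(|j-j'|+1)(|k-k'|+1)])$, together with $G_{J;\,\cdot}^{\cdot}=\OO(F_{\cdot}^{\cdot}/(J+1))$ and $H_{J,K;\,\cdot}^{\cdot}=\OO(G_{J-2K;\,\cdot}^{\cdot})$. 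Combining the exact covariance formulas with these bounds gives the first displayed estimate. The step-function structure of $\sigma$ contributes only the terms $s_{i,l}$ and the remainders appearing in the proof of Proposition \ref{app_prop3}, and by the bound $\sum_{i=N_{p-1}+1}^{N_p}\sum_l|s_{i,l}|\lesssim\sum_l\frac{1-\ee^{-\lambda_l\Delta}}{\lambda_l\gamma_l^\alpha}=\OO(\Delta^\alpha)$ already established above they are absorbed into the stated error.

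For the quadratic-variation covariances I would pass from $T$ to $T^2$ via the Gaussian identity: if $U=\widetilde U+a$, $V=\widetilde V+c$ with $\widetilde U,\widetilde V$ jointly centered Gaussian and $a,c$ deterministic, then $\cov[U^2,V^2]=2\cov[\widetilde U,\widetilde V]^2+4ac\,\cov[\widetilde U,\widetilde V]$. Hence $|\cov[(T_{i,j,k}X)^2,(T_{i',j',k'}X)^2]|\lesssim\cov[T_{i,j,k}X,T_{i',j',k'}X]^2+|\EE T_{i,j,k}X|\,|\EE T_{i',j',k'}X|\,|\cov[T_{i,j,k}X,T_{i',j',k'}X]|$, where the mean contributions are controlled by $\sum_i(\sum_l A_{i,l}\delta_{j,k}e_l)^2\le F_{j,k}=\OO(\Delta^\alpha)$ and the Schwarz inequality, exactly as in the proof of Lemma \ref{app_lem1}. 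Then I would sum over $i,i'$ using $\sum_{i,i'=1}^N(|i-i'|+1)^{-2}=\OO(N)$ and over the spatial indices using $m^2\Delta=\OO(1)$ --- which holds since $\delta=r\sqrt\Delta$ forces $m_1,m_2\sim\Delta^{-1/2}$, whence $m=m_1m_2\sim N$ by $m=\OO(N)$ and $N=\OO(m)$ --- together with $\sum_{j,j'=1}^{m_1}(|j-j'|+1)^{-2}=\OO(m_1)$. I expect the main obstacle to be precisely the bookkeeping in this last step: organizing the fourfold spatial sum and the double temporal sum so that all the error terms (the squares of the three pieces of the covariance bound and the mean cross terms) collapse to $\OO(N\Delta^{2\alpha})$ in the case diagonal in $(j,k)$ and to $\OO(mN\Delta^{2\alpha})$ in the full sum. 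This is routine but delicate, and it is exactly where the balance conditions $m=\OO(N)$ and $N=\OO(m)$ are consumed.
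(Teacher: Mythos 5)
Your proposal follows essentially the same route as the paper: the decomposition $\Delta_i^N x_l = A_{i,l}+B_{i,l}+C_{i,l}$ with the deterministic part dropping out of the covariance, the auxiliary sums $F_{j',k'}^{j,k}$, $G_{J,j',k'}^{j,k}$, $H_{J,K,j',k'}^{j,k}$ weighted by $\gamma_l^{-\alpha}$, the bounds imported from Lemma 4.10 of \cite{TKU2025a} together with $H_{J,K}=\OO(G_{J-2K})$ to get the first estimate, and the Gaussian second-moment argument plus the summations $\sum_{i,i'}(|i-i'|+1)^{-2}=\OO(N)$, $\sum_{j,j'}(|j-j'|+1)^{-2}=\OO(m_1)$ for the remaining two claims, exactly as the paper does by reducing to the scheme of Lemma 4.11 in \cite{TKU2025a} and Lemma \ref{app_lem1}. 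The only blemish is the phrase ``$m^2\Delta=\OO(1)$'', which is false when $m=m_1m_2\sim N$; what your own justification ($m_1,m_2\sim\Delta^{-1/2}$, i.e.\ $m_1^2\Delta=\OO(1)$) actually supplies is the correct condition, so this is a notational slip rather than a gap.
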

\begin{proof}
For positive integers $J$ and $K$, we define
\begin{align*}
F_{j',k'}^{j,k} &=
\sum_{l \in \mathbb N^2} 
\frac{1-\ee^{-\lambda_l \Delta}}{\lambda \gamma_l^\alpha}
\delta_{j,k} e_l \delta_{j',k'} e_l,
\\
G_{J,j',k'}^{j,k} &=
\sum_{l \in \mathbb N^2} 
\frac{(1-\ee^{-\lambda_l \Delta})^2}{\lambda_l \gamma_l^{\alpha}}
\ee^{-\lambda_l J \Delta}
\delta_{j,k} e_l \delta_{j',k'} e_l,
\\
H_{J,K,j',k'}^{j,k} &=
\sum_{l \in \mathbb N^2} 
\biggl(
\frac{(1-\ee^{-\lambda_l \Delta})^2}{\lambda_l \gamma_l^{\alpha}}
(1 -\ee^{- 2\lambda_l \Delta}) \ee^{-\lambda_l J \Delta}
\sum_{p = 1}^{K} v_{p-1} \ee^{2\lambda_l p \Delta}
\biggr)
\delta_{j,k} e_l \delta_{j',k'} e_l,
\quad J > 2K.
\end{align*}
Since
\begin{equation*}
\cov[(B_{i,l}+C_{i,l})(B_{i',l}+C_{i',l})]
= 
\begin{cases}
\EE[B_{i,l}^2] +\EE[C_{i,l}^2], & i = i',
\\
\EE[B_{i,l}B_{i',l}] +\EE[B_{i,l}C_{i',l}]
+\EE[B_{i',l} C_{i,l}], & i \neq i',
\end{cases}
\end{equation*}
\begin{align*}
&\EE[B_{i,l}^2] +\EE[C_{i,l}^2] 
\\
&= 
\frac{v_{i-1} (1-\ee^{-\lambda_l \Delta})}{\lambda_l\gamma_l^{\alpha}}
\\
&\quad+\frac{\ee^{-2 \lambda_l (i-1)\Delta}(1-\ee^{-\lambda_l \Delta})^2}
{2\lambda_l\gamma_l^{\alpha}}
\biggl(-v_{i-1} \ee^{2\lambda_l N_{p-1} \Delta} 
+(1-\ee^{-2\lambda_l \Delta})
\sum_{k=1}^{N_{p-1}} v_{k-1} \ee^{2\lambda_l k \Delta}
\biggr),
\\
&\EE[B_{i,l}B_{i',l}] +\EE[B_{i,l}C_{i',l}] +\EE[B_{i',l} C_{i,l}] 
\\
&=
\frac{(1-\ee^{-\lambda_l \Delta})^2}{2\lambda_l \gamma_l^{\alpha}}
(1 -\ee^{- 2\lambda_l \Delta}) \ee^{-\lambda_l(i+i'-2)\Delta}
\sum_{k = 1}^{i \land i'-1} v_{k-1} \ee^{2\lambda_l k \Delta} 
\\
&\quad
-\ind_{\{i \neq i'\}} 
\frac{v_{i'\land i -1} (1-\ee^{-\lambda_l \Delta})^2}{2\lambda_l\gamma_l^\alpha}
\ee^{-\lambda_l(|i-i'|-1)\Delta}(1+\ee^{- \lambda_l \Delta}),
\end{align*}
we obtain
\begin{align*}
&\cov[T_{i,j,k} X, T_{i',j',k'} X] 
\\
&= \sum_{l \in \mathbb N^2} \cov[(B_{i,l}+C_{i,l})(B_{i',l}+C_{i',l})] 
\delta_{j,k} e_l \delta_{j',k'} e_l
\\
&=
\begin{cases}
v_{i-1} F_{j',k'}^{j,k} +\frac{1}{2}(-v_{i-1} G_{2(i-1-N_{p-1}),j',k'}^{j,k}
+H_{2(i-1),N_{p-1}, j',k'}^{j,k}), 
& N_{p-1} +1 \le i = i' \le N_p,
\\
\frac{1}{2}(
H_{i+i'-2,i \land i'-1, j',k'}^{j,k}
-v_{i \land i' -1} (G_{|i-i'|-1,j',k'}^{j,k} +G_{|i-i'|,j',k'}^{j,k})), 
& i \neq i'.
\end{cases}
\end{align*}
Since we find from Lemma 4.10 in \cite{TKU2025a} and 
$H_{J,K,j',k'}^{j,k} = \OO(G_{J-2K,j',k'}^{j,k})$ that
\begin{align*}
F_{j',k'}^{j,k} &= 
\OO \biggl( \Delta^\alpha \Bigl(\Delta +\frac{1}{(|j-j'|+1)(|k-k'|+1)} \Bigr) \biggr)
\\
&\quad + \OO \biggl( \Delta^{1/2+\alpha} 
\Bigl( \ind_{\{ j \neq j'\}} \frac{1}{|j-j'|+1} 
+\ind_{\{ k \neq k'\}} \frac{1}{|k-k'|+1} \Bigr) \biggr),
\\
G_{J,j',k'}^{j,k} &= 
\OO \biggl( \frac{\Delta^\alpha}{J+1} 
\Bigl(\Delta +\frac{1}{(|j-j'|+1)(|k-k'|+1)} \Bigr) \biggr)
\\
&\quad + \OO \biggl( \frac{\Delta^{1/2+\alpha}}{J+1} 
\Bigl( \ind_{\{ j \neq j'\}} \frac{1}{|j-j'|+1} 
+\ind_{\{ k \neq k'\}} \frac{1}{|k-k'|+1} \Bigr) \biggr),
\\
H_{J,K,j',k'}^{j,k} &= 
\OO \biggl( \frac{\Delta^\alpha}{J-2K+1} 
\Bigl(\Delta +\frac{1}{(|j-j'|+1)(|k-k'|+1)} \Bigr) \biggr)
\\
&\quad + \OO \biggl( \frac{\Delta^{1/2+\alpha}}{J-2K+1} 
\Bigl( \ind_{\{ j \neq j'\}} \frac{1}{|j-j'|+1} 
+\ind_{\{ k \neq k'\}} \frac{1}{|k-k'|+1} \Bigr) \biggr),
\end{align*}
we obtain
\begin{align*}
&v_{i-1} F_{j',k'}^{j,k} +\frac{1}{2}(-v_{i-1} G_{2(i-1-N_{p-1}),j',k'}^{j,k}
+H_{2(i-1),N_{p-1}, j',k'}^{j,k})
\\
&= 
\OO \biggl( \Delta^\alpha 
\Bigl(\Delta +\frac{1}{(|j-j'|+1)(|k-k'|+1)} \Bigr) \biggr)
\\
&\quad + \OO \biggl( \Delta^{1/2+\alpha} 
\Bigl( \ind_{\{ j \neq j'\}} \frac{1}{|j-j'|+1} 
+\ind_{\{ k \neq k'\}} \frac{1}{|k-k'|+1} \Bigr) \biggr),
\end{align*}
\begin{align*}
&H_{i+i'-2,i \land i'-1, j',k'}^{j,k}
-v_{i \land i' -1} (G_{|i-i'|-1,j',k'}^{j,k} +G_{|i-i'|,j',k'}^{j,k})
\\
&= 
\OO \biggl( \frac{\Delta^\alpha}{|i-i'|+1} 
\Bigl(\Delta +\frac{1}{(|j-j'|+1)(|k-k'|+1)} \Bigr) \biggr)
\\
&\quad + \OO \biggl( \frac{\Delta^{1/2+\alpha}}{|i-i'|+1} 
\Bigl( \ind_{\{ j \neq j'\}} \frac{1}{|j-j'|+1} 
+\ind_{\{ k \neq k'\}} \frac{1}{|k-k'|+1} \Bigr) \biggr)
\end{align*}
and
\begin{align*}
\cov[T_{i,j,k}X, T_{i',j,k}X]
&= 
\OO \biggl( \frac{\Delta^\alpha}{|i-i'|+1} 
\Bigl(\Delta +\frac{1}{(|j-j'|+1)(|k-k'|+1)} \Bigr) \biggr)
\\
&\quad + \OO \biggl( \frac{\Delta^{1/2+\alpha}}{|i-i'|+1} 
\Bigl( \ind_{\{ j \neq j'\}} \frac{1}{|j-j'|+1} 
+\ind_{\{ k \neq k'\}} \frac{1}{|k-k'|+1} \Bigr) \biggr).
\end{align*}
In the same way as the proof of Lemma 4.11 in \cite{TKU2025a}, 
this result yields the rest of two results.
\end{proof}

\begin{proof}[\bf{Proof of Theorem \ref{app_th3}}]
We define
\begin{equation*}
\mathcal Z = 
\frac{1}{m N \Delta^{\alpha^*}} \sum_{k=1}^{m_2} \sum_{j=1}^{m_1} \sum_{i=1}^N 
(T_{i,j,k}X)^2,
\quad
\mathcal Z' = 
\frac{1}{m' N' (\Delta')^{\alpha^*}} \sum_{k=1}^{m_2'} \sum_{j=1}^{m_1'} \sum_{i=1}^N 
(T_{i,j,k}'X)^2,
\end{equation*}
and
\begin{equation*}
g_{r,\alpha}(\nu) = \frac{V c_\gamma^\alpha \psi_{r,\alpha}(\theta_2)}{(1-2b)^2}
\int_{[b,1-b]^2} \ee^{-\kappa^\TT y} \dd y.
\end{equation*}
By Proposition \ref{app_prop3} and 
a simple calculation similar to that in the proof of (6.2) in \cite{TKU2025arXiv1},
we have
\begin{align*}
\frac{1}{mN \Delta^\alpha} 
\sum_{k=1}^{m_2} \sum_{j=1}^{m_1} \sum_{i=1}^N \EE \bigl[ (T_{i,j,k} X)^2 \bigr]
&= 
V c_\gamma^\alpha \psi_{r,\alpha}(\theta_2)
\times \frac{1}{m} \sum_{k=1}^{m_2} \sum_{j=1}^{m_1} 
\ee^{-\kappa^\TT \overline y_{j,k}} 
+\OO(\Delta)
\\
&= g_{r,\alpha}(\nu) +\OO(\Delta).
\end{align*}
Since Lemma \ref{app_lem3} yields
\begin{align*}
&\EE \Biggl[ \biggl( \sum_{k=1}^{m_2} \sum_{j=1}^{m_1} \sum_{i=1}^N 
\bigl( (T_{i,j,k} X)^2 -\EE[(T_{i,j,k} X)^2] \bigr) \biggr)^2 \Biggr]
\\
&=
\sum_{k,k'=1}^{m_2} \sum_{j,j'=1}^{m_1} \sum_{i,i'=1}^N
\cov[(T_{i,j,k} X)^2, (T_{i',j',k'} X)^2]
\\
&= \OO(m N \Delta^{2\alpha}),
\end{align*}
we obtain
\begin{align*}
&\sqrt{m N} (\mathcal Z -g_{r,\alpha^*}(\nu))
\\
&=
\frac{1}{\sqrt{m N} \Delta^{\alpha^*}} \sum_{k=1}^{m_2} \sum_{j=1}^{m_1} \sum_{i=1}^N 
\bigl( (T_{i,j,k}X)^2 -\EE[(T_{i,j,k}X)^2] \bigr)
\\
&\qquad +
\sqrt{m N} \Biggl(
\frac{1}{m N \Delta^{\alpha^*}} \sum_{k=1}^{m_2} \sum_{j=1}^{m_1} \sum_{i=1}^N 
\EE[(T_{i,j,k}X)^2] -g_{r,\alpha^*}(\nu)
\Biggr)
\\
&= \Op(1) + \OO(\sqrt{m N} \Delta)
\\
&= \Op(1).
\end{align*}
In the same way as the proof of Theorem 3.1 in \cite{TKU2025arXiv1}, we have
\begin{equation*}
\sqrt{m N} ( \mathcal Z' -g_{r,\alpha^*}(\nu) ) = \Op(1)
\end{equation*}
and $\sqrt{m N}(\mathcal Z'/\mathcal Z -1) = \Op(1)$, which yields 
\begin{align*}
\sqrt{m N} (\widehat \alpha -\alpha^*)
= \frac{\sqrt{m N}}{\log(4)} \log \biggl( \frac{\mathcal Z'}{\mathcal Z} \biggr)
= \frac{\sqrt{m N}}{\log(4)} \Biggl(
\frac{\mathcal Z'}{\mathcal Z} -1 +\Op \biggl( \frac{1}{m N} \biggr)
\Biggr)
= \Op(1).
\end{align*}
This concludes the proof.
\end{proof}

\begin{proof}[\bf{Proof of Theorem \ref{app_th4}}]
By the mean value theorem, we have
\begin{equation*}
-\frac{\sqrt{m N}}{\log(N)} 
\pd_{\nu} K_{m,N} (\nu^*;\widehat \alpha)^\TT
=\int_0^1 \pd_{\nu}^2 
K_{m,N} (\nu^* +u(\widehat \nu -\nu^*);\widehat \alpha) \dd u 
\frac{\sqrt{m N}}{\log(N)} (\widehat \nu -\nu^*).
\end{equation*}
It follows that
\begin{align*}
\sup_{\nu \in \Xi} \bigl| K_{m,N}(\nu;\widehat \alpha) - K(\nu,\nu^*) \bigr|
&\le \sup_{\nu \in \Xi} 
\bigl| K_{m,N}(\nu;\widehat \alpha)- K_{m,N}(\nu;\alpha^*) \bigr|
\\
&\quad+ \sup_{\nu \in \Xi} 
\bigl| K_{m,N}(\nu;\alpha^*) -K(\nu,\nu^*) \bigr|
\\
&=: G_1 +H_1,
\\
\frac{\sqrt{m N}}{\log(N)} \pd_{\nu} K_{m,N}(\nu^*;\widehat \alpha)
&= \frac{\sqrt{m N}}{\log(N)} \bigl(
\pd_{\nu} K_{m,N}(\nu^*;\widehat \alpha)
- \pd_{\nu} K_{m,N}(\nu^*;\alpha^*) \bigr)
\\
&\quad+
\frac{1}{\log(N)} \times \sqrt{m N} \pd_{\nu} K_{m,N}(\nu^*;\alpha^*)
\\
&=: G_2 + H_2,
\end{align*}
and that for $\epsilon_{m,N} \downarrow 0$, 
\begin{align*}
\sup_{|\nu-\nu^*| \le \epsilon_{m,N}}
\bigl| \pd_{\nu}^2 K_{m,N}(\nu;\widehat \alpha)
- L(\nu^*;\alpha^*) \bigr| 
&\le 
\sup_{\nu \in \Xi}
\bigl| \pd_{\nu}^2 K_{m,N}(\nu;\widehat \alpha)
- \pd_{\nu}^2 K_{m,N}(\nu;\alpha^*) \bigr|
\\
&\quad+
\sup_{|\nu-\nu^*| \le \epsilon_{m,N}}
\bigl| \pd_{\nu}^2 K_{m,N}(\nu;\alpha^*)
- L(\nu^*;\alpha^*) \bigr| 
\\
&=: G_3 +H_3,
\end{align*}
where the function $\Xi \ni \nu \mapsto K(\nu,\nu^*)$ takes its unique minimum 
in $\nu = \nu^*$ and
$L(\nu^*;\alpha^*)$ is a positive definite $4$-dimensional matrix 
(see the proof of Theorem 2.2 in \cite{TKU2025a}).

We find from Lemma \ref{app_lem3} and the proof of Theorem 2.2 in \cite{TKU2025a} 
that $H_j = \op(1)$ as $m,N \to \infty$ for $j=1,2,3$.
We also find from Theorem \ref{app_th3} and
the proof of Theorem 4.1 in \cite{TKU2025arXiv1} that
\begin{equation*}
G_1 = \op(1),
\quad
G_2 = \Op(1),
\quad
G_3 = \op(1).
\end{equation*}
Hence, we obtain
\begin{equation*}
\widehat \nu \pto \nu^*,
\end{equation*}
\begin{equation*}
\frac{\sqrt{m N}}{\log(N)} \pd_{\nu} K_{m,N}(\nu^*;\widehat \alpha)^\TT = \Op(1),
\end{equation*}
\begin{equation*}
\int_0^1 \pd_{\nu}^2 
K_{m,N} (\nu^* +u(\widehat \nu -\nu^*);\widehat \alpha) \dd u
\pto L(\nu^*;\alpha^*)
\end{equation*}
and this concludes the proof.
\end{proof}


\begin{thebibliography}{99}
\bibitem{Altmeyer_etal2022}
Altmeyer, R., Bretschneider, T., Jan\'ak, J., and Reiss, M. (2022). 
\newblock Parameter Estimation in an SPDE Model for Cell Repolarisation. 
\newblock {\em SIAM/ASA Journal on Uncertainty Quantification}, 10(1), 179--199.


\bibitem{Bai1997}
Bai, J. (1997). 
\newblock Estimation of a change point in multiple regression models. 
\newblock {\em The Review of Economics and Statistics}, 79(4), 551--560.


\bibitem{Bai_Perron1998}
Bai, J. and Perron, P. (1998). 
\newblock Estimating and Testing Linear Models with Multiple Structural Changes.
\newblock {\em Econometrica}, 66(1), 47--78.


\bibitem{Bibinger_Bossert2023}
Bibinger, M. and Bossert, P. (2023). 
\newblock Efficient parameter estimation for parabolic SPDEs based on 
a log-linear model for realized volatilities. 
\newblock {\em Japanese Journal of Statistics and Data Science}, 6(1), 407--429.


\bibitem{Bibinger_Trabs2020}
Bibinger, M. and Trabs, M. (2020).
\newblock Volatility estimation for stochastic {PDE}s using high-frequency
  observations.
\newblock {\em Stochastic Processes and their Applications}, 130(5), 3005--3052.


\bibitem{Billingsley1999}
Billingsley, P. (1999).
\newblock {\em Convergence of probability measures}, 2nd edition. 
\newblock Wiley.


\bibitem{Bossert2024}
Bossert, P. (2024).
\newblock Parameter estimation for second-order SPDEs in multiple space dimensions.
\newblock{\em Statistical Inference for Stochastic Processes}, 27(3), 485--583.


\bibitem{Cahill_etal2015}
Cahill, N., Rahmstorf, S., and Parnell, A.C. (2015).
\newblock Change points of global temperature.
\newblock {\em Environmental Research Letters}, 10(8), 084002.


\bibitem{Chong2020}
Chong, C. (2020). 
\newblock High-frequency analysis of parabolic stochastic pdes. 
\newblock{\em The Annals of Statistics}, 48(2), 1143--1167.


\bibitem{Cialenco_Huang2020}
Cialenco, I. and Huang, Y. (2020). 
\newblock A note on parameter estimation for discretely sampled spdes. 
\newblock{\em Stochastics and Dynamics}, 20(3), 2050016.


\bibitem{Csorgo_Horvath1997}
Cs\"{o}rg\"{o}, M. and Horv\'{a}th, L. (1997).
\newblock {\em Limit Theorems in Change-Point Analysis}.
\newblock Wiley.



\bibitem{DaPrato_Zabczyk2014}
Da Prato, G. and Zabczyk, J. (2014). 
\newblock {\em Stochastic equations in infinite dimensions}. 
\newblock Cambridge University Press.


\bibitem{DeGregorio_Iacus2008}
De Gregorio, A. and Iacus, S.M. (2008). 
\newblock Least squares volatility change point estimation 
for partially observed diffusion processes. 
\newblock {\em Communications in Statistics - Theory and Methods}, 37(15), 2342--2357.


\bibitem{Gallagher_etal2013}
Gallagher, C., Lund, R., and Robbins, M. (2013).
\newblock Changepoint Detection in Climate Time Series with Long-Term Trends.
\newblock {\em Journal of climate}, 26(14), 4994--5006.


\bibitem{Gamain_Tudor2023}
Gamain, J. and Tudor, C.A. (2023). 
\newblock Exact variation and drift parameter estimation for the nonlinear 
fractional stochastic heat equation. 
\newblock {\em Japanese Journal of Statistics and Data Science}, 6(1), 381--406.


\bibitem{Genon-Catalot_Jacod1993}
Genon-Catalot, V. and Jacod, J. (1993). 
\newblock On the estimation of the diffusion coefficient 
for multi-dimensional diffusion processes. 
\newblock {\em Annales de l'I.H.P. Probabilit\'es et statistiques}, 29(1), 119--151.


\bibitem{Hildebrandt_Trabs2021}
Hildebrandt, F. and Trabs, M. (2021).
\newblock Parameter estimation for {SPDE}s based on discrete observations in
  time and space.
\newblock {\em Electronic Journal of Statistics}, 15(1), 2716--2776.


\bibitem{Horvath_Rice2024}
Horv\'{a}th, J. and Rice, G. (2024).
\newblock {\em Change Point Analysis for Time Series}.
\newblock Springer.


\bibitem{Iacus_Yoshida2012}
Iacus, S.M. and Yoshida, N. (2012).
\newblock Estimation for the change point of volatility in a stochastic differential equation.
\newblock {\em Stochastic Processes and their Applications}, 122, 1068--1092.


\bibitem{Jacod_Protter2012}
Jacod, J. and Protter, P. (2012). 
\newblock {\em Discretization of processes}. 
\newblock Springer.


\bibitem{Jones_Zhang1997}
Jones, R.H. and Zhang, Y. (1997). 
\newblock Models for Continuous Stationary Space-Time Processes. 
\newblock In {\em Modelling Longitudinal and Spatially Correlated Data}, 289--298,  
Springer.


\bibitem{Kaino_Uchida2021a}
Kaino, Y. and Uchida, M. (2021).
\newblock Parametric estimation for a parabolic linear SPDE model based on
  discrete observations.
\newblock {\em Journal of Statistical Planning and Inference}, 211, 190--220.


\bibitem{Kaino_Uchida2021b}
Kaino, Y. and Uchida, M. (2021).
\newblock Adaptive estimator for a parabolic linear SPDE with a small noise.
\newblock {\em Japanese Journal of Statistics and Data Science}, 4(1), 513--541.


\bibitem{Karatzas_Shrve2012}
Karatzas, I. and Shreve, S. (2012).
\newblock {\em Brownian motion and stochastic calculus}, second edition.
\newblock Springer. 


\bibitem{Koul_etal2003}
Koul, H.L., Qian, L., and Surgailis, D. (2003).
\newblock Asymptotics of M-estimators in two-phase linear regression models.
\newblock {\em Stochastic Processes and their Applications}, 103(1), 123--154.

\bibitem{Lee_etal2003}
Lee, S., Ha, J., Na, O., and Na, S. (2003).
\newblock The Cusum Test for Parameter Change in Time Series Models.
\newblock {\em Scandinavian Journal of Statistics}, 30(4), 781--796.


\bibitem{Lee2011}
Lee, S. (2011). 
\newblock Change point test for dispersion parameter based on discretely observed sample from SDE models.
\newblock {\em Bulletin of the Korean Mathematical Society}, 48(4), 839--845.


\bibitem{Manner_etal2024}
Manner, H., Rodr\'{i}guez, G., and St\"{o}ckler, F. (2024).
\newblock A changepoint analysis of exchange rate and commodity price risks for Latin American stock markets.
\newblock {\em International Review of Economics \& Finance}, 89, Part A, 1385--1403.


\bibitem{Markussen2003}
Markussen, B. (2003).
\newblock Likelihood inference for a discretely observed stochastic partial differential equation. 
\newblock {\em Bernoulli}, 9(5), 745--762.


\bibitem{McLeish1974}
McLeish, D.L. (1974).
\newblock Dependent central limit theorems and invariance principles.
\newblock {\em The Annals of Probability}, 2(4), 620--628.


\bibitem{Mohapl2000}
Mohapl, J. (2000).
\newblock A Stochastic Advection-Diffusion Model for the Rocky Flats Soil Plutonium Data. 
\newblock {\em Annals of the Institute of Statistical Mathematics}, 52(1), 84--107.


\bibitem{Negri_Nishiyama2017}
Negri, I. and Nishiyama, Y. (2017).
\newblock Z-process method for change point problems with applications to discretely observed diffusion processes.
\newblock {\em Statistical Methods and Applications}, 26(2), 231--250.


\bibitem{Ninomiya2005}
Ninomiya, Y. (2005).
\newblock Information criterion for Gaussian change-point model.
\newblock {\em Statistics \& Probability Letters}, 72(3), 237--247.


\bibitem{Ninomiya2015}
Ninomiya, Y. (2015).
\newblock Change-point model selection via AIC.
\newblock {\em Annals of the Institute of Statistical Mathematics}, 67(5), 943--961.


\bibitem{North_etal2011}
North, G.R., Wang, J., and Genton, M.G. (2011).
\newblock Correlation models for temperature fields. 
\newblock {\em Journal of climate}, 24(22), 5850--5862.


\bibitem{Otto_Schmid2016}
Otto, P. and Schmid, W. (2016). 
\newblock Detection of spatial change points in the mean and 
covariances of multivariate simultaneous autoregressive models. 
\newblock {\em Biometrical Journal}, 58(5), 1113--1137.


\bibitem{Page1954}
Page, E.S. (1954). 
\newblock Continuous Inspection Schemes. 
\newblock {\em Biometrika}, 41(1), 100--115. 


\bibitem{Page1955}
Page, E.S. (1955). 
\newblock A Test for a Change in a Parameter Occurring at an Unknown Point. 
\newblock {\em Biometrika}, 42(3), 523--527. 


\bibitem{Piterbarg_Ostrovskii1997}
Piterbarg, L. and Ostrovskii, A. (1997).
\newblock {\em Advection and Diffusion in Random Media: Implications for Sea
  Surface Temperature Anomalies}.
\newblock Springer.


\bibitem{Reiss_etal2023arXiv}
Reiss, M., Strauch, C., and Trottner, L. (2023).
\newblock Change point estimation for a stochastic heat equation.
\newblock {\em arXiv preprint arXiv:2307.10960}.


\bibitem{Song2020}
Song, J. (2020). 
\newblock Robust test for dispersion parameter change 
in discretely observed diffusion processes. 
\newblock {\em Computational Statistics and Data Analysis}, 142, 106832.


\bibitem{Song_Lee2009}
Song, J. and Lee, S. (2009). 
\newblock Test for parameter change in discretely observed diffusion processes. 
\newblock {\em Statistical Inference for Stochastic Processes}, 12(2), 165--183.


\bibitem{Tiepner_Trottner2024arXiv}
Tiepner, A. and Trottner, L. (2024).
\newblock Multivariate change estimation for a stochastic heat equation 
from local measurements.
\newblock {\em arXiv preprint arXiv:2409.15059}.


\bibitem{TKU2022}
Tonaki, Y., Kaino, Y. and Uchida, M. (2022).
\newblock Adaptive tests for parameter changes in ergodic diffusion processes 
from discrete observations.
\newblock {\em Statistical Inference for Stochastic Processes}, 25(2), 397--430.


\bibitem{TKU2023a}
Tonaki, Y., Kaino, Y., and Uchida, M. (2023). 
\newblock Estimation for change point of discretely observed ergodic diffusion processes.
\newblock {\em Scandinavian Journal of Statistics}, 50(1), 142--183.


\bibitem{TKU2023b}
Tonaki, Y., Kaino, Y., and Uchida, M. (2023).
\newblock Parameter estimation for linear parabolic {SPDEs} in two space
  dimensions based on high frequency data.
\newblock {\em Scandinavian Journal of Statistics}, 50(4), 1568--1589.


\bibitem{TKU2024a}
Tonaki, Y., Kaino, Y., and Uchida, M. (2024).
\newblock Parameter estimation for a linear parabolic {SPDE} model in two space
  dimensions with a small noise.
\newblock {\em Statistical Inference for Stochastic Processes}, 27(1), 123--179.


\bibitem{TKU2025a}
Tonaki, Y., Kaino, Y., and Uchida, M. (2025).
\newblock Parametric estimation for linear parabolic SPDEs in two space
  dimensions based on temporal and spatial increments.
\newblock {\em Metrika}, 88(5), 601--656. 


\bibitem{TKU2025b}
Tonaki, Y., Kaino, Y., and Uchida, M. (2025).
\newblock Small diffusivity asymptotics for a linear parabolic SPDE
  in two space dimensions.
\newblock{\em Statistical Inference for Stochastic Processes}, 28(2), 11.




\bibitem{TKU2025arXiv1}
Tonaki, Y., Kaino, Y., and Uchida, M. (2025).
\newblock Estimation for linear parabolic SPDEs in two space dimensions 
with unknown damping parameters.
\newblock{\em arXiv preprint arXiv:2504.09872}.


\bibitem{TKU2026}
Tonaki, Y., Kaino, Y., and Uchida, M. (2026).
\newblock Small dispersion asymptotics for an SPDE 
in two space dimensions using triple increments.
\newblock {\em Journal of Statistical Planning and Inference}, 241, 106333.


\bibitem{TU2023}
Tonaki, Y. and Uchida, M. (2023). 
\newblock 
Change point inference in ergodic diffusion processes based on high frequency data.
\newblock {\em Stochastic Processes and their Applications}, 158, 1--39. 


\bibitem{Tuckwell2013}
Tuckwell, H.C. (2013). 
\newblock Stochastic Partial Differential Equations in Neurobiology: 
Linear and Nonlinear Models for Spiking Neurons. 
\newblock In {\em Stochastic Biomathematical Models}, 149--173, Springer.


\bibitem{Yang_etal2006}
Yang, P., Dumont, G., and Ansermino, J.M. (2006). 
\newblock Adaptive Change Detection in Heart Rate 
Trend Monitoring in Anesthetized Children. 
\newblock {\em IEEE Transactions on Biomedical Engineering}, 53(11), 2211--2219.


\bibitem{Zou_etal2020}
Zou, C., Wang, G., and Li, R. (2020). 
\newblock Consistent selection of the number of change-points via sample-splitting.
\newblock {\em The Annals of Statistics}, 48(1), 413--439.
\end{thebibliography}
\end{document}